\newcommand{\bbC}{{\mathbb C}}
\newcommand{\bbN}{{\mathbb N}}
\newcommand{\bbR}{{\mathbb R}}
\newcommand{\bbZ}{{\mathbb Z}}
\def\cG{{\mathcal G}}
\def\cH{{\mathcal H}}
\def\cI{{\mathcal I}}
\def\cM{{\mathcal M}}
\def\cN{{\mathcal N}}
\def\cP{{\mathcal P}}
\def\cQ{{\mathcal Q}}
\def\cW{{\mathcal W}}
\def\index{r}
\def\Re{\operatorname{Re}}
\def\Im{\operatorname{Im}}
\newtheorem{theorem}{Theorem}[section]
\newtheorem{proposition}[theorem]{Proposition}
\newtheorem{corollary}[theorem]{Corollary}
\newtheorem{lemma}[theorem]{Lemma}
\newtheorem{definition}[theorem]{Definition}
\newtheorem{remark}[theorem]{Remark}
\begin{document}

\title{On a Higher-Dimensional Worm Domain and its Geometric Properties}

\author{Steven G. Krantz, Marco M. Peloso, and Caterina Stoppato}

\begin{abstract}
We construct new $3$-dimensional variants of the classical Diederich-Forn{\ae}ss worm domain. We show that they are smoothly bounded, pseudoconvex, and have nontrivial Nebenh\"{u}lle. We also show that their Bergman projections do not preserve the Sobolev space for sufficiently large Sobolev indices.
\end{abstract}

\maketitle

\begin{center}{\footnotesize
{\sc Mathematics Subject Classification:}  32T20, 32T05, 32A25, 32A36.\\
\noindent{\sc Key Words:} pseudoconvex, worm domain, Nebenh\"{u}lle, irregular Bergman projection.}
\end{center}


\section{Introduction}

In the paper \cite{DFo1}, Diederich and Forn{\ae}ss constructed a
class of smoothly bounded, pseudoconvex domains in $\bbC^2$, denoted
here as $\Omega_\mu$ for $\mu>0$, having nontrivial Nebenh\"{u}lle,
i.e., such that the intersection of all pseudoconvex domains
containing the closure $\overline{\Omega}_\mu$ properly contains
$\Omega_\mu$. Such a domain $\Omega_\mu$ in the two complex variables
$z_1,z_2$, now called (smooth) {\em worm domain}, is a union of disks
in the $z_1$-plane, each centered at the unitary point
$e^{i\log|z_2|^2}$. In other words, $\Omega_\mu$ is a union of disks
winding about the origin in the $z_1$-plane according to a rotation
angle $\log|z_2|^2$. The radii of these disks are $1$ when
$\log|z_2|^2$ belongs to the interval $I_\mu=(-\mu,\mu)$, and fade to
$0$ as $|\log|z_2|^2|$ ranges from $\mu$ to some finite
$\mu'>\mu$. This ground-breaking idea of Diederich and Forn{\ae}ss
makes every defining function for $\Omega_\mu$ not globally
plurisubharmonic. Moreover, the boundary $\partial\Omega_\mu$ is
strongly Levi-pseudoconvex at every point, apart from the
$1$-dimensional exceptional set $\{0\}\times A$, where $A$ is an
annulus centered at the origin in the $z_2$-plane, determined by the
condition $\log|z_2|^2\in I_\mu$. Thanks to these features, the domain
$\Omega_\mu$ also serves as a counterexample to a number of important
geometric phenomena. We mention in particular the
article~\cite{Barrett-Acta} where, for $\mu>\frac\pi2$, Barrett showed
that the Bergman projection on $\Omega_\mu$ fails to preserve the
Sobolev space $W^{\index,2}(\Omega_\mu)$ when
$\index\geq\nu=\frac{\pi}{2\mu}$.  Based on Barrett's result, with a
clever and far-reaching argument, Christ~\cite{Chr} proved that the
Bergman projection and the Neumann operator on $\Omega_\mu$ fail to be
globally regular. These properties are well described also in the monographs~\cite{ChSh,Straube-lectures}. Besides the non-trivial Nebenh\"ulle and the already mentioned questions of global regularity, the worm domains  are ``testing ground'' for several other problems. Here we just mention some of the most recent papers on these topics, \cite{Cu-Sa,DallAra-Mongodi,KPS2016,KPS2019,KMPS2023,Liu2019} and references therein.
 
It is a matter of great interest to fully understand the worm, and to put it in a more general context. For instance, the recent article~\cite{ADF2023} (referring to~\cite{Barrett-sectorial}) highlights a crucial geometric feature of the worm domain and uses it to generalize the construction of Diederich and Forn{\ae}ss to a wider class of $2$-dimensional domains. It is also interesting to investigate analogous constructions in higher dimensions. The article~\cite{BaSa} constructed analogs of the worm domain within $\bbC^n$. There, the boundary is strongly Levi-pseudoconvex at every point apart from a $1$-dimensional exceptional set $\{{\bf0}\}\times A$, where ${\bf0}$ is the origin in $\bbC^{n-1}$ and $A$ is an annulus centered at the origin in the $z_n$-plane.

In the present work, we construct for every $\mu>0$ a new class
$\mathscr{C}_\mu$ of smoothly bounded, pseudoconvex domains in
$\bbC^3$. Every $\Omega\in\mathscr{C}_\mu$ is, again, a union of disks
winding about the origin in the $z_1$-plane; but now the rotation
angle is $\log|z_2z_3|^2$, where $(z_2,z_3)$ is confined to a bounded
neighborhood of the product of annuli $A\times A$ determined by the
condition $(\log|z_2|^2,\log|z_3|^2)\in I_\mu\times I_\mu$. The
boundary $\partial\Omega$ turns out to be weakly Levi-pseudoconvex at
every point $z=(z_1,z_2,z_3)\in\partial\Omega$ with $(z_2,z_3)\in
A\times A$. Moreover, $\partial\Omega$ includes the $2$-dimensional
complex manifold $\{0\}\times A\times A$. We prove that $\Omega$, just
like the original worm domain of~\cite{DFo1}, has nontrivial
Nebenh\"ulle when $\mu>\frac\pi2$. We also prove, following the lines
of~\cite{Barrett-Acta}, that the Bergman projection of $\Omega$ does
not preserve the Sobolev space $W^{\index,2}(\Omega)$ when
$\mu>\frac\pi2,\index\geq\nu=\frac{\pi}{2\mu}$.

Our construction requires some effort, which is unsurprising if we take into account Boas and Straube's result~\cite[Corollary 5.16]{Straube-lectures}. Let us recall the substance of this result, originally proven in~\cite{BoSt}. Let $\Omega\subseteq\bbC^n$ be a smoothly bounded domain and assume there exists a smooth real submanifold $M$ of the boundary $\partial\Omega$ such that: $M$ includes all infinite-type points of $\partial\Omega$; the real tangent space of $M$ at each $p\in M$ is included in the null space of the Levi form of $\partial\Omega$ at $p$. A trivial de Rham cohomology for $M$ is a sufficient, but not necessary, condition for the regularity of the Bergman projection and Neumann operator of $\Omega$. Regularity is still guaranteed when $M$ has nontrivial cohomology, provided a specific cohomology class vanishes. Thus, a possible irregularity of the Bergman projection is not simply due to the presence of the manifold $M$ but rather to \emph{how} $M$ is embedded in $\partial\Omega$.

The paper is structured as follows:

Section~\ref{sec:construction} constructs a wide family of domains $\cW_\eta$ in $\bbC^3$, varying with the choice of a function $\eta$. Two conditions on $\eta$ are determined, to make the boundary $\partial\cW_\eta$ smooth except at (possible) boundary points $z$ with $z_2z_3=0$. A third condition on $\eta$ is then added, to make $\cW_\eta$ smoothly bounded.

Section~\ref{sec:pseudoconvex} starts with a characterization of pseudoconvexity of $\cW_\eta$ by means of two inequalities in the first and second derivatives of $\eta$. Some unbounded or non-smooth examples are immediately provided. Then $\mathscr{C}_\mu$ is defined as the class of the $\cW_\eta$ with $\eta$ fulfilling all three conditions for boundedness and smoothness, as well as the inequalities for pseudoconvexity. Finally, $\mathscr{C}_\mu$ is proven not empty by constructing an explicit class of examples in Theorem~\ref{thm:main}, which is the main result of this work.

Section~\ref{sec:study} is devoted to the study of any smoothly bounded, pseudoconvex domain $\Omega$ in the class $\mathscr{C}_\mu$. In particular, for the case when $\mu>\frac\pi2$: Theorem~\ref{non-density} shows that $\Omega$ has nontrivial Nebenh\"ulle; Theorem~\ref{non-regularity} concerns the irregularity of the Bergman projection of $\Omega$.

Section~\ref{sec:proof} comprises the proof of Theorem~\ref{non-regularity}, as well as several tools used in the proof.\\\\


\section{Basic construction}\label{sec:construction}

We define a family of domains $\cW_\eta$ in dimension $3$, varying with the choice of a function $\eta$. Making different assumptions on $\eta$, we will later prove different properties of $\cW_\eta$. As customary, the word domain here means a nonempty connected open subset of $\bbC^3$.

\begin{definition}\label{def:basic}{\em 
Fix a real number $\mu > \pi$. Let $\eta: \bbR^2 \to [0, +\infty)$ be upper semicontinuous, with $\eta^{-1}(0)$ including the square $(-\mu,\mu)\times(-\mu,\mu)$ and with $\eta^{-1}([0,1))$ path-connected. Set $\bbC^*:=\bbC\setminus\{0\}$ and
\[\cW = \cW_\eta= \left \{ (z_1, z_2, z_3) \in \bbC \times \bbC^* \times \bbC^*: \left | z _1 - e^{i \log |z_2 z_3|^2} \right|^2 < 1 - \eta(\log|z_2|^2,\log|z_3|^2) \right \} \, .\]
Equivalently, $\cW$ is the subset of $\bbC \times \bbC^* \times \bbC^*$ having the upper semicontinuous function
\[\rho(z_1,z_2,z_3) = |z_1|^2 - 2 \Re (z_1 e^{-i \log|z_2 z_3|^2} ) + \eta(\log|z_2|^2,\log|z_3|^2)\]
as a defining function.
}
\end{definition}

In what follows, we let $\bigtriangleup(P,r)$ denote the disk in the complex plane with
center $P$ and radius $r$.  We also let $\bigtriangleup^*(P,r)$ denote the punctured disk.
Finally, we let $A(P, r_1, r_2)$ denote the planar annulus with center $P$ and radii
$r_1 < r_2$.

The next remark decomposes $\cW$ into a family of disks that rotate about the origin in $\bbC$.

\begin{remark} {\em 
If we set
\[R(z_2, z_3) := 1 - \eta(\log|z_2|^2,\log|z_3|^2) \, ,\]
then
\[\cW = \bigcup_{R(z_2, z_3) > 0} \bigtriangleup  \bigl ( e^{i \log|z_2 z_3|^2}, R(z_2, z_3) \bigr ) \times \{(z_2, z_3)\} \, .\]
As a result, if we let $\pi_j$ denote projection on the $j$th variable, we have $\pi_1(\cW) =  \bigtriangleup^*(0,2)$.
}
\end{remark}

\begin{remark} {\em
Let us show that $\cW$ is a domain in $\bbC^3$.

The set $\cW$ is an open subset of $\bbC \times \bbC^* \times \bbC^*$ (whence an open subset of $\bbC^3$) because its defining function $\rho:\bbC \times \bbC^* \times \bbC^*\to\bbR$ is upper semicontinuous.

Additionally, $\cW$ is path-connected because for every $z = (z_1,z_2,z_3) \in \cW$ we can find a path in $\cW$ joining $z$ to the point $(1,1,1)$. We begin by observing that, since $\eta^{-1}([0,1))$ is path-connected by hypothesis, there exists a path $\gamma=(\gamma_2,\gamma_3)$ from $(\log|z_2|^2,\log|z_3|^2)$ to $(0,0)$ in $\eta^{-1}([0,1))$. Now, $z = (z_1,z_2,z_3)$ can be joined by a line segment within $\cW$ to the point $(e^{i \log|z_2 z_3|^2},z_2,z_3)$, which can be joined to the point $(1,\frac{z_2}{|z_2|},\frac{z_3}{|z_3|})$ by means of the path
\[[0,1]\ni s\mapsto\left(e^{i\gamma_2(s)+i\gamma_3(s)},e^{\gamma_2(s)/2}\frac{z_2}{|z_2|},e^{\gamma_3(s)/2}\frac{z_3}{|z_3|}\right)\in\cW\,.\]
Finally, every point of the form $(1,e^{it_2},e^{it_3})$ can be joined to $(1,1,1)$ by means of the path
\[[0,1]\ni
  s\mapsto\left(1,e^{i(1-s)t_2},e^{i(1-s)t_3}\right)\in\cW\,.\]
}
\end{remark}

The main body of $\cW$ consists of those disks that have radius $1$. Because of the choices made in Definition~\ref{def:basic}, the set $R^{-1}(1)$ of those $(z_2,z_3) \in (\bbC^*)^2$ such that $\eta(\log|z_2|^2,\log|z_3|^2)=0$ includes
\[A(0, e^{-\mu/2}, e^{\mu/2}) \times A(0, e^{-\mu/2}, e^{\mu/2}).\]
Restricting to this main body for the sake of simplicity, we can study the fiber $\pi_1^{-1}(z_1)$ over each $z_1\in\bigtriangleup^*(0,2)$ as follows.

\begin{proposition} \sl
Fix any point $z_1 = \rho_0 e^{i\theta_0}$ with $0 < \rho_0 < 2$ and $\theta_0 \in \bbR$.  For any $k \in \bbZ$, let
\[r_k^{\pm} := \exp(\theta_0/2 \pm [\arccos(\rho_0/2)]/2 + \pi k) \, .\]
If $(z_2, z_3) \in R^{-1}(1)$, then $(z_1,z_2,z_3)$ belongs to $\pi_1^{-1}(z_1)$ if and only if 
\[z_2 z_3 \in \bigcup_{k \in \bbZ} A(0, r_k^-, r_k^+) \, .\]
\end{proposition}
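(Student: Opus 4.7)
The plan is to unwind the defining inequality of $\cW$ under the hypothesis $\eta = 0$ and reduce it to a standard trigonometric inequality. First I would observe that the assumption $(z_2,z_3) \in R^{-1}(1)$ means exactly $\eta(\log|z_2|^2,\log|z_3|^2) = 0$, so the membership of $(z_1,z_2,z_3)$ in $\cW$ collapses to the single inequality
\[\bigl| z_1 - e^{it} \bigr|^2 < 1, \qquad t := \log|z_2 z_3|^2.\]

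Next, I would substitute $z_1 = \rho_0 e^{i\theta_0}$ and expand the modulus squared. After simplification this is equivalent to
\[\cos(t - \theta_0) > \rho_0/2.\]
Since $0 < \rho_0 < 2$, the quantity $\alpha := \arccos(\rho_0/2)$ lies in $(0, \pi/2)$, and the above inequality holds if and only if
\[t - \theta_0 \in (-\alpha + 2\pi k,\, \alpha + 2\pi k) \quad \text{for some } k \in \bbZ.\]

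Finally, recalling $t = \log|z_2 z_3|^2 = 2 \log|z_2 z_3|$, I would exponentiate to conclude that $|z_2 z_3| = e^{t/2}$ lies in $(e^{\theta_0/2 - \alpha/2 + \pi k},\, e^{\theta_0/2 + \alpha/2 + \pi k}) = (r_k^-, r_k^+)$, equivalently $z_2 z_3 \in A(0, r_k^-, r_k^+)$ for some $k \in \bbZ$. Each implication is reversible, yielding the required biconditional. The argument is essentially a direct computation; there is no genuine obstacle, and the only point requiring care is the bookkeeping of the $2\pi\bbZ$ ambiguity of the arccosine and the correct distribution of the factor $\tfrac12$ from $\log|z_2z_3|^2 = 2\log|z_2z_3|$ among $\theta_0$, $\alpha$, and $\pi$ in the definition of $r_k^{\pm}$.
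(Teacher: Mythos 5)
Your proposal is correct and follows essentially the same route as the paper: reduce to $\rho_0(\rho_0-2\cos(\theta_0-\log|z_2z_3|^2))<0$, i.e.\ $\cos(\theta_0-\log|z_2z_3|^2)>\rho_0/2$, solve the cosine inequality with $\arccos(\rho_0/2)$ modulo $2\pi$, and exponentiate half the resulting interval to obtain the annuli $A(0,r_k^-,r_k^+)$. The bookkeeping of the factor $\tfrac12$ and the $2\pi\bbZ$ periodicity is handled exactly as in the paper's proof, so there is nothing to add.
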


\begin{proof}
If $(z_2, z_3) \in R^{-1}(1)$, then $\eta(\log|z_2|^2,\log|z_3|^2)=0$.  Also
\begin{eqnarray*}
\rho(z_1,z_2,z_3) & = & |z_1|^2 - 2\Re (z_1 e^{-i\log|z_2 z_3|^2})  \\
                  & = & \rho_0^2 - 2\rho_0 \Re (e^{i(\theta_0 - \log|z_2z_3|^2)} \\
		  & = & \rho_0 (\rho_0 -2 \cos (\theta_0 - \log|z_2z_3|^2)) \, .
\end{eqnarray*}
This quantity is negative if and only if $\cos(\theta_0 - \log|z_2 z_3|^2) > \rho_0/2$.  This is, in turn,
equivalent to having
\[\theta_0 - \arccos(\rho_0/2) + 2\pi k < \log |z_2z_3|^2 < \theta_0 + \arccos(\rho_0/2) + 2\pi k \, ,\]
that is, $r_k^- < |z_2 z_3| < r_k^+$ for some $k \in \bbZ$.
\end{proof}

Let us set a few notations, which will prove useful later.

\begin{definition}\label{def:etacirc}{\em
For each $z=(z_1,z_2,z_3)\in \bbC \times \bbC^* \times \bbC^*$, we set $L:=\log|z_2z_3|^2$. Moreover, referring to the map $(t_2,t_3)\mapsto\eta(t_2,t_3)$, we set 
\begin{align*}
\eta^\circ(z)&:=\eta(\log|z_2|^2,\log|z_3|^2)
\end{align*}
and, if $\eta$ is smooth,
\begin{align*}
\eta'_{j}(z)&:=\frac{\partial\eta}{\partial t_j}(\log|z_2|^2,\log|z_3|^2)\\
\eta''_{j,k}(z)&:=\frac{\partial^2\eta}{\partial t_j\partial t_k}(\log|z_2|^2,\log|z_3|^2)
\end{align*}
for all $j,k\in\{2,3\}$.}
\end{definition}

\begin{remark} \label{rmk:firstderivatives} {\em
Fix $(z_1, z_2, z_3)\in \bbC \times \bbC^* \times \bbC^*$. By direct computation, we see that
\[\frac{\partial}{\partial z_1} \rho(z_1, z_2, z_3) = \overline{z}_1 - e^{-iL}\,.\]
Moreover, if $\eta$ is smooth, then
\[ \frac{\partial}{\partial z_j} \rho(z_1, z_2, z_3) = -2 \Im (z_1 e^{-iL}) z_j^{-1} + \eta'_jz_j^{-1} \ \ \hbox{for} \ \ j \in \{2,3\}\,.\]
}
\end{remark}

Now, for appropriate choices of $\eta$, we can prove a first significant result about the geometry of $\cW_\eta$.

\begin{proposition}\label{prop:complextangent}   \sl
Under the following additional assumptions:
\begin{itemize}
\item[(I)] $\eta$ is smooth in an open neighborhood of $\eta^{-1}([0,1])$;
\item[(II)] at each point of $\eta^{-1}(1)$ (if any), the gradient of $\eta$ does not vanish;
\end{itemize}
then $\cW_\eta$ is a domain in $\bbC^3$ whose boundary is smooth except at the boundary points $z$ with $z_2z_3=0$ (if any). For each $z=(z_1, z_2, z_3) \in \partial \cW_\eta$, if $z_1 \neq e^{iL}$ and $z_2z_3\neq0$, then the complex tangent space to $\partial \cW_\eta$ at $z$ is spanned by the vectors
\begin{align*}
&\hbox{\bf v}(z_1, z_2, z_3) := \left ( \begin{array}{c}
                            2\Im (z_1 e^{-iL})-\eta'_2 \\
			    z_2(\overline{z}_1 - e^{-i L}) \\
			          0  \\
				\end{array}
			\right )\,,\qquad
&\hbox{\bf w}(z_1, z_2, z_3) := \left ( \begin{array}{c}
			    2\Im (z_1 e^{-iL}) - \eta'_3\\
	    			  0  \\
			    z_3(\overline{z}_1 - e^{-i L}) \\
				\end{array}
			\right ) \, .
\end{align*}
In the (non generic) case when $z_1 = e^{iL}$ and $z_2z_3\neq0$, the complex tangent space to $\partial \cW_\eta$ at $z$ is spanned by the vectors
\begin{align*}
&\hbox{\bf v}(z_1, z_2, z_3) := \left ( \begin{array}{c}
                            1 \\
			   0 \\
			   0  \\
				\end{array}
			\right )\,,\qquad
&\hbox{\bf w}(z_1, z_2, z_3) := \left ( \begin{array}{c}
			    0\\
	    		z_2\eta'_3\\
			-z_3\eta'_2
			\end{array}
			\right ) \, .
\end{align*}
Under the further additional assumption:
\begin{itemize}
\item[(III)] $\eta^{-1}([0,1])$ is a bounded subset of $\bbR^2$;
\end{itemize}
then $\cW_\eta$ is a smoothly bounded domain in $\bbC^3$.
\end{proposition}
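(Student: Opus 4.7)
The plan is to derive everything from Remark~\ref{rmk:firstderivatives}, which already supplies the formulas for $\partial_{z_j}\rho$. First, note that assumption (I), combined with the smoothness of $(z_2, z_3) \mapsto (\log|z_2|^2, \log|z_3|^2)$ on $(\bbC^*)^2$, makes $\rho$ smooth on an open neighborhood of $\rho^{-1}(0) \cap (\bbC \times \bbC^* \times \bbC^*)$. To obtain smoothness of $\partial\cW_\eta$ away from $\{z_2 z_3 = 0\}$, I need to show that $d\rho$ does not vanish at any boundary point with $z_2 z_3 \neq 0$, and this I would do by cases. If $z_1 \neq e^{iL}$, then $\partial_{z_1}\rho = \bar{z}_1 - e^{-iL} \neq 0$. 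If instead $z_1 = e^{iL}$, then $|z_1 - e^{iL}|^2 = 0$ forces $\eta^\circ = 1$ on the boundary, so assumption (II) provides $(\eta'_2, \eta'_3) \neq (0,0)$; combining $\Im(z_1 e^{-iL}) = \Im(1) = 0$ with Remark~\ref{rmk:firstderivatives} gives $\partial_{z_j}\rho = \eta'_j/z_j$ for $j = 2, 3$, so $d\rho \neq 0$ once more.

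The complex tangent space at $z$ is the two-complex-dimensional kernel of $\partial\rho$ inside $T^{1,0}_z \bbC^3$, i.e. the space of $(a,b,c) \in \bbC^3$ satisfying
\[(\bar{z}_1 - e^{-iL})\, a + \frac{-2 \Im(z_1 e^{-iL}) + \eta'_2}{z_2}\, b + \frac{-2 \Im(z_1 e^{-iL}) + \eta'_3}{z_3}\, c = 0\,.\]
In the generic case $z_1 \neq e^{iL}$, I would just plug the proposed $\mathbf{v}$ and $\mathbf{w}$ into this equation: the first and second (resp.\ third) terms cancel by design, and the two vectors are linearly independent because their $z_2$- and $z_3$-components are proportional to the nonzero scalar $\bar{z}_1 - e^{-iL}$. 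In the non-generic case $z_1 = e^{iL}$, the first coefficient drops out and, after clearing denominators, the equation reduces to $z_3 \eta'_2\, b + z_2 \eta'_3\, c = 0$; the vectors $(1,0,0)$ and $(0, z_2 \eta'_3, -z_3 \eta'_2)$ both solve this, and they are independent precisely because $(\eta'_2, \eta'_3) \neq (0,0)$, which is the content of (II) recovered above.

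For the final assertion, under (III) the boundedness of $\eta^{-1}([0,1))$ forces $\log|z_2|^2$ and $\log|z_3|^2$ to stay in a bounded subset of $\bbR$ for every $z \in \cW_\eta$, so $|z_2|$ and $|z_3|$ are bounded both above and away from $0$; combined with $|z_1| < |e^{iL}| + \sqrt{1 - \eta^\circ} \leq 2$, this makes $\cW_\eta$ bounded with closure compactly contained in $\bbC \times \bbC^* \times \bbC^*$, so every boundary point has $z_2 z_3 \neq 0$ and the argument above gives smoothness of the entire boundary. The main subtlety I anticipate is the non-generic case $z_1 = e^{iL}$, where $\partial_{z_1}\rho$ vanishes: one must extract both the nonvanishing of $d\rho$ and the correct tangent basis entirely from the $\eta$-gradient, leveraging the boundary equation $\rho = 0$ together with assumption (II).
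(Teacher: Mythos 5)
Your proposal is correct and follows essentially the same route as the paper: show $d\rho\neq 0$ at boundary points with $z_2z_3\neq0$ by splitting into the cases $z_1\neq e^{iL}$ and $z_1=e^{iL}$ (where the boundary equation forces $\eta^\circ=1$ and assumption (II) supplies a nonvanishing $\eta'_j$), then deduce boundedness and absence of boundary points with $z_2z_3=0$ from (III). Your explicit verification that $\mathbf{v},\mathbf{w}$ span the kernel of $\partial\rho$ simply fills in the computation the paper dismisses as ``can be computed directly.''
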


\begin{proof}
We first work under assumptions (I) and (II) only. Suppose that $z=(z_1, z_2, z_3)$ is a boundary point of $\cW_\eta$ with $z_2z_3\neq0$, so that
 $\rho(z) = 0$.  If $\partial \rho/\partial z_1(z) = 0$, then $z_1 = e^{iL}$.
 This in turn implies that 
\[0 =  \rho(z) = -1 + \eta(\log|z_2|^2,\log|z_3|^2) \, .\]
 As a result, $(\log|z_2|^2,\log|z_3|^2) \in \eta^{-1}(1)$. Thus, $\eta'_j \neq 0$ for some $j \in \{2,3\}$. The equality $z_1 = e^{i L}$ also implies that 
\[\frac{\partial}{\partial z_j} \rho(z) = \eta'_jz_j^{-1} \, .\]   
In conclusion, $\partial \rho/\partial z_j (z) \ne 0$. Since the gradient of $\rho$ does not vanish at $z$, the boundary $\partial \cW_\eta$ is smooth at $z$. The generators of the complex tangent space can be computed directly.

We now take all assumptions (I),(II),(III). In particular, there exists $\mu'>\mu>0$ such that $\eta^{-1}([0,1])$ is included in a closed disk of radius $\mu'$ centered at the origin in $\bbR^2$. Now, if $z=(z_1, z_2, z_3)\in\cW_\eta$, then $R(z_2,z_3)>0$, whence $\eta(\log|z_2|^2,\log|z_3|^2)<1$ and $e^{-\mu'/2}<|z_2|,|z_3|<e^{\mu'/2}$. Thus,
\[\cW_\eta \subseteq \triangle^*(0,2)\times A(0,e^{-\mu'/2},e^{\mu'/2})\times A(0,e^{-\mu'/2},e^{\mu'/2})\]
is a bounded domain and there exists no boundary point $w=(w_1, w_2, w_3)$ of $\cW_\eta$ having $w_2w_3=0$.
\end{proof}

\begin{remark} \label{rmk:complextangent}
Under our additional hypotheses (I) and (II), we have that the complex tangent space to $\partial \cW_\eta$ at each boundary point of the form $(0, w_2, w_3)$ equals 
\[\hbox{\rm span}_\bbC \left \{ \left ( \begin{array}{c}
                                          0 \\
					  1 \\
					  0
				     \end{array}
			     \right ) ,
			     \left ( \begin{array}{c}
                                          0 \\
					  0 \\
					  1
				     \end{array}
			     \right )  \right \} \, .\]\\
\end{remark}


\section{Study of pseudoconvexity}\label{sec:pseudoconvex}

As it happened with the original worm domain of \cite{DFo1}, we will now study the pseudoconvexity of $\cW_\eta$. We begin with a useful remark.

\begin{remark}\label{rmk:localdefiningfunction}
Fix $w=\big(w_1,w_2,w_3\big) \in \overline{\cW}$ with $\varepsilon:=\big|w_2\,w_3\big|>0$. Then
\[U_{w}:=\Big\{(z_1,z_2,z_3) \in \bbC^3\ :\ (z_2\,z_3)^2 \in \bigtriangleup\big((w_2\,w_3)^2,\varepsilon^2\big)\Big\}\]
is an open neighborhood of $w$ in $\bbC^3$. Moreover, $\bigtriangleup\big((w_2\,w_3)^2,\varepsilon^2\big)$ is a simply connected open subset of $\bbC$ missing the origin, where a branch of logarithm $\log$ (whence a branch $\arg$ of the argument function) is defined. In $U_{w}$, the product
\begin{align*}
\widetilde{\rho}(z_1,z_2,z_3) :=& e^{\arg(z_2\,z_3)^2}\rho(z_1,z_2,z_3)\\
=& e^{\arg(z_2\,z_3)^2}|z_1|^2 - 2 \Re (z_1 e^{-i \log(z_2\,z_3)^2} ) + e^{\arg(z_2\,z_3)^2}\eta(\log|z_2|^2,\log|z_3|^2)
\end{align*}
is a local defining function for $\cW_\eta$.
\end{remark}

We are now ready for our first main theorem. In the proof, and throughout the paper, we will use the notation
\begin{equation}\label{eq:hermitianform}
h_M({\bf u}_1,{\bf u}_2):={\bf u}_1^tM\bar{\bf u}_2
\end{equation}
for every complex square matrix $M$ of order $3$ and any ${\bf u}_1,{\bf u}_2\in\bbC^3$. We will also use the notations $\eta^\circ,\eta'_j,\eta''_{jk}:\bbC \times \bbC^* \times \bbC^*\to [0, +\infty)$ set in Definition~\ref{def:etacirc}.

\begin{theorem}\label{thm:pseudoconvex}
We take the extra assumptions (I),(II) on $\eta$ and fix $w\in\partial\cW_\eta$ with $w_2\,w_3\neq0$. If
\begin{align}
&\eta^\circ+\eta''_{22}\geq0\label{eq:pseudoconvex1}\\
&\eta''_{22}\eta''_{33}-(\eta''_{23})^2+\eta^\circ(\eta''_{22}+\eta''_{33}-2\eta''_{23})-(\eta'_2-\eta'_3)^2\geq0\label{eq:pseudoconvex3}
\end{align}
at $w$, then $\cW_\eta$ is Levi pseudoconvex at $w$. If, moreover, conditions~\eqref{eq:pseudoconvex1} and~\eqref{eq:pseudoconvex3} are fulfilled in a neighborhood of $w$, then the local defining function $\widetilde{\rho}$ constructed in Remark~\ref{rmk:localdefiningfunction} is plurisubharmonic near $w$.
\end{theorem}

\begin{proof}
Fix $w=\big(w_1,w_2,w_3\big) \in \overline{\cW_\eta}$ with $\big|w_2\,w_3\big|>0$. Let us adopt the notations of Remark~\ref{rmk:localdefiningfunction} and argue locally in $U_{w}$, studying the complex Hessian matrix of the local defining function $\widetilde{\rho}$ is plurisubharmonic. The second addend in $\widetilde{\rho}(z)$, namely $- 2 \Re (z_1 e^{-i \log(z_2\,z_3)^2} )$, is the real part of a holomorphic function, whence a real-valued pluriharmonic function. We are left with studying the complex Hessian matrix of the sum of $m(z):=e^{\arg(z_2\,z_3)^2}|z_1|^2$ and $n(z):=e^{\arg(z_2\,z_3)^2}\eta(\log|z_2|^2,\log|z_3|^2)$. Let us compute the complex Hessian matrix of $m(z)$ at $z=(z_1,z_2,z_3)\in U_{w}$. By direct computation we see that
\begin{eqnarray*}
 \frac{\partial}{\partial z_1} m(z) & = & e^{\arg(z_2\,z_3)^2}\overline{z}_1\\
 \frac{\partial}{\partial z_j} m(z) & = & e^{\arg(z_2\,z_3)^2}\frac{|z_1|^2}{iz_j}\ \ \hbox{for} \ \ j \in \{2,3\}\,,
 \end{eqnarray*}
whence the complex Hessian matrix of $m(z)$ is $e^{\arg(z_2\,z_3)^2}M(z)$, where
\[M(z):=\begin{pmatrix}
1&i\frac{\bar z_1}{\bar z_2}&i\frac{\bar z_1}{\bar z_3}\\
-i\frac{z_1}{z_2}&\frac{|z_1|^2}{|z_2|^2}&\frac{|z_1|^2}{z_2\bar z_3}\\
-i\frac{z_1}{z_3}&\frac{|z_1|^2}{\bar z_2 z_3}&\frac{|z_1|^2}{|z_3|^2}
\end{pmatrix}\,.\]
Let us consider the vectors
\[{\bf p}
=\begin{pmatrix}
1\\0\\0
\end{pmatrix}\,,\quad
{\bf n}_2
=\begin{pmatrix}
i z_1\\z_2\\0
\end{pmatrix}\,,\quad
{\bf n}_3
=\begin{pmatrix}
i z_1\\0\\z_3
\end{pmatrix}\,.
\]
Since
\begin{align*}
M\bar{\bf p}=\begin{pmatrix}
1\\
-i\frac{z_1}{z_2}\\
-i\frac{z_1}{z_3}
\end{pmatrix}\,,\quad
M\bar{\bf n}_2=\begin{pmatrix}
0\\
0\\
0
\end{pmatrix}\,,\quad
M\bar{\bf n}_3=\begin{pmatrix}
0\\
0\\
0
\end{pmatrix}\,,
\end{align*}
the vectors ${\bf p},{\bf n}_2,{\bf n}_3$ form an orthogonal basis with respect to $h_M$, the real number $h_M({\bf p},{\bf p})=1$ is positive and $\bar{\bf n}_2,\bar{\bf n}_3$ belong to the null space of $M$. In particular, $M$ is positive semidefinite and $m$ is plurisubharmonic.

Now assume $\eta$ to fulfill the assumptions (I),(II) of Proposition~\ref{prop:complextangent}. The equalities
\begin{eqnarray*}
 \frac{\partial}{\partial z_1} n(z) & = & 0\\
 \frac{\partial}{\partial z_j} n(z) & = & e^{\arg(z_2\,z_3)^2}\left(\frac{\eta^\circ}{iz_j}+\frac{\eta'_j}{z_j}\right)\ \ \hbox{for} \ \ j \in \{2,3\}\\
 \frac{\partial^2}{\partial z_j\partial\bar z_k} n(z) & = & \frac{e^{\arg(z_2\,z_3)^2}}{-i\bar z_k}\left(\frac{\eta^\circ}{iz_j}+\frac{\eta'_j}{z_j}\right)
+ e^{\arg(z_2\,z_3)^2}\left(\frac{\eta'_k}{iz_j\bar z_k}+\frac{\eta''_{j,k}}{z_j\bar z_k}\right) \ \ \hbox{for} \ \ j,k \in \{2,3\}
 \end{eqnarray*}
imply that the complex Hessian matrix of $n(z)$ is $e^{\arg(z_2\,z_3)^2}N(z)$, where
\[N(z):=\begin{pmatrix}
0&0&0\\
0&\frac{\eta^\circ+\eta''_{22}}{|z_2|^2}&\frac{\eta^\circ+\eta''_{23}+i(\eta'_2-\eta'_3)}{z_2\bar z_3}\\
0&\frac{\eta^\circ+\eta''_{23}+i(\eta'_3-\eta'_2)}{\bar z_2 z_3}&\frac{\eta^\circ+\eta''_{33}}{|z_3|^2}
\end{pmatrix}\,.\]
Since
\begin{align*}
N\bar{\bf p}=\begin{pmatrix}
0\\
0\\
0
\end{pmatrix}\,,\quad
N\bar{\bf n}_2=\begin{pmatrix}
0\\
\frac{\eta^\circ+\eta''_{22}}{z_2}\\
\frac{\eta^\circ+\eta''_{23}+i(\eta'_3-\eta'_2)}{z_3}
\end{pmatrix}\,,\quad
N\bar{\bf n}_3=\begin{pmatrix}
0\\
\frac{\eta^\circ+\eta''_{23}+i(\eta'_2-\eta'_3)}{z_2}\\
\frac{\eta^\circ+\eta''_{33}}{z_3}
\end{pmatrix}\,,
\end{align*}
we conclude that, for all $\alpha_1,\alpha_2,\alpha_3\in\bbC$,
\[h_{M+N}(\alpha_1{\bf p}+\alpha_2{\bf n}_2+\alpha_3{\bf n}_3,\alpha_1{\bf p}+\alpha_2{\bf n}_2+\alpha_3{\bf n}_3)=|\alpha_1|^2+\begin{pmatrix}\alpha_2&\alpha_3\end{pmatrix}C\begin{pmatrix}\overline{\alpha}_2\\\overline{\alpha}_3\end{pmatrix}\,,\]
where
\[C=C(\log|z_2|^2,\log|z_3|^2):=\begin{pmatrix}
\eta^\circ+\eta''_{22}&\eta^\circ+\eta''_{23}+i(\eta'_2-\eta'_3)\\
\eta^\circ+\eta''_{23}+i(\eta'_3-\eta'_2)&\eta^\circ+\eta''_{33}
\end{pmatrix}\,.\]
Thus, the sum $m+n$ (whence $\widetilde{\rho}$) is plurisuharmonic near $w$ if, and only if, $C(\log|z_2|^2,\log|z_3|^2)$ is positive semidefinite for $z$ near $w$. This happens if, and only if, both the $(1,1)$ entry $\eta^\circ+\eta''_{22}$ of $C$ and the determinant
\begin{align*}
\det C &= (\eta^\circ+\eta''_{22})(\eta^\circ+\eta''_{33})-(\eta^\circ+\eta''_{23})^2-(\eta'_2-\eta'_3)^2\\
&=\eta''_{22}\eta''_{33}-(\eta''_{23})^2+\eta^\circ(\eta''_{22}+\eta''_{33}-2\eta''_{23})-(\eta'_2-\eta'_3)^2
\end{align*}
are non-negative near $w$. This is the same as asking for conditions~\eqref{eq:pseudoconvex1} and~\eqref{eq:pseudoconvex3} to be fulfilled near $w$. Finally, in case $w\in\partial\cW_\mu$: if conditions~\eqref{eq:pseudoconvex1} and~\eqref{eq:pseudoconvex3} are fulfilled \emph{at} $w$, then $h_{M+N}$ is non-negative on $\bbC^3\times\bbC^3$, whence $\cW_\mu$ is Levi pseudoconvex at $w$.
\end{proof}

Theorem~\ref{thm:pseudoconvex} allows us to provide our first three examples of pseudoconvex domains in $\bbC^3$.

\begin{corollary}\label{cor:unbounded}\phantom{Examples}
\begin{enumerate}[(i)]
\item If $\eta\equiv0$, then $\cW_\eta=\cW_0$ is an unbounded pseudoconvex domain, whose boundary is smooth except at the points $w$ with $w_2w_3=0$.
\item If $\eta$ is the characteristic function of the complement of the square $(-\mu,\mu)\times(-\mu,\mu)$, then $\cW_\eta$ is a bounded pseudoconvex domain, whose boundary is smooth except at the points $w$ with $\log|w_2|^2=\pm\mu$ or $\log|w_3|^2=\pm\mu$.
\item $\cW_\eta$ is an unbounded pseudoconvex domain with smooth boundary if we pick
\[\eta(t_2,t_3)=\phi(t_2+t_3),\mathrm{\quad i.e.,\quad}\eta^\circ=\phi(\log|z_2z_3|^2)\,,\]
for a smooth, strictly convex and even function $\phi$ having $\phi^{-1}(0)=[-\mu,\mu]$ and $\phi^{-1}([0,1])=[-\mu',\mu']$ for some real number $\mu'>\mu$.
\end{enumerate}
\end{corollary}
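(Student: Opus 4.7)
The plan is to verify each part by first checking the bookkeeping hypotheses of Definition~\ref{def:basic} on the chosen $\eta$, then invoking Proposition~\ref{prop:complextangent} for the smoothness and (un)boundedness assertions, and finally applying Theorem~\ref{thm:pseudoconvex} (or a soft substitute in the one non-smooth case) to produce pseudoconvexity.

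Parts~(i) and~(iii) are the smooth cases and fit this template directly. For $\eta\equiv 0$ in part~(i), assumption~(I) of Proposition~\ref{prop:complextangent} holds trivially and~(II) is vacuous since $\eta^{-1}(1)=\emptyset$, so $\partial\cW_0$ is smooth away from $\{w_2w_3=0\}$; condition~(III) fails because $\eta^{-1}([0,1])=\bbR^2$, and I would exhibit an explicit unbounded sequence such as $(e^{i\log n^2},n,1)\in\cW_0$. Both inequalities \eqref{eq:pseudoconvex1}--\eqref{eq:pseudoconvex3} collapse to $0\geq 0$, so Theorem~\ref{thm:pseudoconvex} gives pseudoconvexity. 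For part~(iii), smoothness of $\phi$ yields~(I); at any point with $\eta^\circ=1$ the gradient $(\phi',\phi')$ is nonzero because strict convexity (beyond the plateau $[-\mu,\mu]$) forces $\phi'(\pm\mu')\neq 0$, giving~(II); and $\eta^{-1}([0,1])=\{|t_2+t_3|\leq\mu'\}$ is an unbounded strip, so~(III) fails. To promote smoothness to \emph{every} boundary point I would observe that no boundary point can have $w_2w_3=0$, since any sequence in $\cW_\eta$ converging to such a point has $|z_2z_3|$ confined to $(e^{-\mu'/2},e^{\mu'/2})$, forcing the companion coordinate to diverge and contradicting convergence in $\bbC^3$. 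For pseudoconvexity, the identities $\eta^\circ=\phi$, $\eta'_2=\eta'_3=\phi'$, $\eta''_{22}=\eta''_{33}=\eta''_{23}=\phi''$ reduce \eqref{eq:pseudoconvex1} to $\phi+\phi''\geq 0$ (true since $\phi\geq 0$ and $\phi''\geq 0$) and collapse the left-hand side of \eqref{eq:pseudoconvex3} to $0$, so Theorem~\ref{thm:pseudoconvex} closes the case.

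Part~(ii) is the only one where $\eta$ is not smooth, so Theorem~\ref{thm:pseudoconvex} is unavailable. The characteristic function of the closed complement of $(-\mu,\mu)^2$ is upper semicontinuous, and since $\eta^\circ\in\{0,1\}$ the defining inequality $|z_1-e^{iL}|^2<1-\eta^\circ$ exhibits $\cW_\eta$ as the intersection of $\cW_0$ from part~(i) with the Reinhardt product $\bbC\times A(0,e^{-\mu/2},e^{\mu/2})\times A(0,e^{-\mu/2},e^{\mu/2})$. Every planar open set is a domain of holomorphy, the product of two such is pseudoconvex, and intersections of pseudoconvex domains remain pseudoconvex; boundedness is immediate from the factorization. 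Away from the loci $\{\log|w_2|^2=\pm\mu\}\cup\{\log|w_3|^2=\pm\mu\}$, a local defining function is either the smooth $\rho$ inherited from part~(i) (when $(w_2,w_3)$ sits in the open square) or a smooth defining function for the product of annuli, each with nonvanishing gradient.

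The main obstacle is precisely part~(ii): the loss of smoothness of $\eta$ blocks any direct appeal to Theorem~\ref{thm:pseudoconvex}. The intersection argument above is my preferred workaround; as a cross-check one could alternatively approximate $\eta$ from above by smooth admissible $\eta_\varepsilon$, apply Theorem~\ref{thm:pseudoconvex} to each $\cW_{\eta_\varepsilon}$, and pass to the increasing union as $\varepsilon\downarrow 0$, invoking the Behnke--Stein theorem to transfer pseudoconvexity to the limit.
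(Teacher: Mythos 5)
Your proposal is correct and follows essentially the same route as the paper: parts (i) and (iii) are handled by verifying (I)--(II) and reducing \eqref{eq:pseudoconvex1}--\eqref{eq:pseudoconvex3} to the identities $\phi+\phi''\geq 0$ and $0\geq 0$ via Theorem~\ref{thm:pseudoconvex}, while part (ii) is obtained exactly as in the paper, by writing $\cW_\eta$ as the intersection of the pseudoconvex $\cW_0$ from (i) with the product $\bbC\times A(0,e^{-\mu/2},e^{\mu/2})\times A(0,e^{-\mu/2},e^{\mu/2})$ and locating the non-smooth boundary points on the circles $\log|w_j|^2=\pm\mu$. (Your optional mollification/Behnke--Stein cross-check for (ii) is not needed and would in fact be delicate, since smooth approximants of the characteristic function need not satisfy \eqref{eq:pseudoconvex3}, but the intersection argument you rely on is the paper's own.)
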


\begin{proof}
To prove the third statement, we will apply Theorem~\ref{thm:pseudoconvex}. The first statement will then follow and, in turn, imply the second statement.
\begin{itemize}
\item[(iii)] If $\eta(t_2,t_3)=\phi(t_2+t_3)$, then the additional assumptions (I) and (II) are fulfilled. Moreover, for all $z\in\bbC\times\bbC^*\times\bbC^*$ we find that $\eta'_2=\eta'_3=\phi'(\log|z_2z_3|^2)$ and that $\eta''_{22}=\eta''_{33}=\eta''_{23}=\phi''(\log|z_2z_3|^2)$. Thus, inequality~\eqref{eq:pseudoconvex1} is fulfilled and an equality holds in~\eqref{eq:pseudoconvex3}. Finally, each $w=\big(w_1,w_2,w_3\big)$ in the finite boundary $\partial\cW_\eta$ has $\log\left|w_2\,w_3\right|^2\in[-\mu',\mu']$, whence $w_2\,w_3\neq0$.
\item[(i)] Pick $\phi$ as in case (iii). For $n\in\bbN^*$, we set $\eta_n:=\frac{\phi}{n}$ and remark that $\{\cW_{\eta_n}\}_{n\in\bbN^*}$ is an increasing sequence of pseudoconvex domains with smooth boundaries. It follows that
\[\cW_0=\bigcup_{n\in\bbN^*}\cW_{\eta_n}\]
is pseudoconvex, too. Additionally, the boundary $\partial\cW_0$ coincides locally near each $w\in\partial\cW_0$ with $w_2\,w_3\neq0$ with the boundary $\partial\cW_{\eta_n}$ for some $n\in\bbN^*$.
\item[(ii)] If $\eta$ is the characteristic function of the complement of the square $(-\mu,\mu)\times(-\mu,\mu)$, then $\cW_\eta$ is the intersection between the unbounded pseudoconvex domain $\cW_0$ of $\bbC\times\bbC^*\times\bbC^*$ and the pseudoconvex domain $\bbC\times A(0, e^{-\mu/2}, e^{\mu/2})\times A(0, e^{-\mu/2}, e^{\mu/2})$ of $\bbC^3$. It follows that $\cW_\eta$ is a pseudoconvex domain in $\bbC^3$. Every boundary point $w=\big(w_1,w_2,w_3\big)$ either is a boundary point of $\cW_0$ with $w_2\,w_3\neq0$ or has $|w_j|=e^{\pm\mu/2}$ for some $j\in\{2,3\}$. The thesis follows.\qedhere
\end{itemize}
\end{proof}

For the unbounded non-smooth domain defined in case (i) and for the bounded non-smooth domain defined in case (ii) of Corollary~\ref{cor:unbounded}, it is convenient to set the following notations.

\begin{definition}\label{def:truncatedworm}{\em 
We set the notations $C_\mu:=\bbR^2\setminus(-\mu,\mu)\times(-\mu,\mu)$ and $\cW_\mu':=\cW_{\chi_{C_\mu}}$, as well as $\cW'_\infty:=\cW_0$.
}
\end{definition}

\begin{remark} {\em 
The equality $\cW'_\infty=\bigcup_{\nu\in(0,+\infty)}\cW'_\nu$ holds true. 
}
\end{remark}

If we fix $z_3=1$ in the bounded non-smooth domain $\cW_\mu'$ constructed in case (ii), we recover the truncated worm domains studied in~\cite{Ki} and in subsequent literature. If we fix $z_3=1$ in the unbounded smooth domain constructed in case (iii) of Corollary~\ref{cor:unbounded}, we recover the original Diederich-Forn{\ae}ss worm domains of $\bbC^2$. Our next goal is showing that the following class of smoothly bounded pseudoconvex domains is not empty.

\begin{definition}\label{def:classofsmoothlyboundedworms}{\em 
We let $\mathscr{C}_\mu$ denote the class of all $\cW_\eta$ with
$\eta$ such that the extra assumptions (I),(II),(III) of
Proposition~\ref{prop:complextangent} and the
inequalities~\eqref{eq:pseudoconvex1},~\eqref{eq:pseudoconvex3} are
all fulfilled near $\partial\cW_\mu$.
}
\end{definition}

Our first attempt to prove that $\mathscr{C}_\mu$ is not empty leads to a negative result, which is however instructive.

\begin{proposition}
If we pick
\[\eta(t_2,t_3)=\phi(t_2)+\psi(t_3)\]
for smooth, convex and even functions $\phi,\psi$ having $\phi^{-1}(0)=\psi^{-1}(0)=[-\mu,\mu]$ and $\phi^{-1}([0,1])=[-\mu',\mu'],\psi^{-1}([0,1])=[-\xi,\xi]$ (for some $\mu',\xi>\mu$), then $\cW_\eta$ is a smoothly bounded domain in $\bbC^3$ that is {\bf not} pseudoconvex.
\end{proposition}

\begin{proof}
Under such hypotheses, we find $\eta'_2=\phi'(\log|z_2|^2), \eta'_3=\psi'(\log|z_3|^2),\eta''_{22}=\phi''(\log|z_2|^2),$ $\eta''_{33}=\psi''(\log|z_3|^2),\eta''_{23}\equiv0$. For $z=(z_1,z_2,z_3)\in\bbC\times\bbC^*\times A(0, e^{-\mu/2}, e^{\mu/2})$, we get that $\eta^\circ=\phi(\log|z_2|^2)$, whence $\eta'_3=\eta''_{33}=\eta''_{23}=0$. For all $\alpha_1,\alpha_2,\alpha_3\in\bbC$, by inspection in the proof of Theorem~\ref{thm:pseudoconvex},
\[h_{M+N}(\alpha_1{\bf p}+\alpha_2{\bf n}_2+\alpha_3{\bf n}_3,\alpha_1{\bf p}+\alpha_2{\bf n}_2+\alpha_3{\bf n}_3)=|\alpha_1|^2+\begin{pmatrix}\alpha_2&\alpha_3\end{pmatrix}C(\log|z_2|^2)\begin{pmatrix}\overline{\alpha}_2\\\overline{\alpha}_3\end{pmatrix}\,,\]
where
\[{\bf p}
=\begin{pmatrix}
1\\0\\0
\end{pmatrix}\,,\quad
{\bf n}_2
=\begin{pmatrix}
i z_1\\z_2\\0
\end{pmatrix}\,,\quad
{\bf n}_3
=\begin{pmatrix}
i z_1\\0\\z_3
\end{pmatrix}\,,\quad C=\begin{pmatrix}
\phi+\phi''&\phi+i\phi'\\
\phi-i\phi'&\phi
\end{pmatrix}\,.\]
Inequality~\eqref{eq:pseudoconvex3} is the same as $\det C(\log|z_2|^2)\geq0$. For $\varepsilon>0$, we have $\det C(\log|z_2|^2)\geq0$ for $z_2\in A(0,e^{\mu/2},e^{(\mu+\varepsilon)/2})$ if, and only if $\phi\phi''-(\phi')^2\geq0$ in $(\mu,\mu+\varepsilon)$. To the contrary, we claim that $\liminf_{\tau\to\mu^+}\left[\frac{\phi''(\tau)}{\phi(\tau)}-\left(\frac{\phi'(\tau)}{\phi(\tau)}\right)^2\right]=-\infty$ and postpone the proof of our claim to the end of the proof.

Let us now prove that $\cW_\eta$ is not pseudoconvex. Take any $z=(z_1,z_2,z_3)\in\partial\cW_\eta$ with $z_3\in A(0, e^{-\mu/2}, e^{\mu/2})$. Based on Remark~\ref{rmk:firstderivatives}, we note that the vector
\[\alpha_1{\bf p}+\alpha_2{\bf n}_2+\alpha_3{\bf n}_3=\begin{pmatrix}
\alpha_1+i z_1(\alpha_2+\alpha_3)\\z_2\alpha_2\\z_3\alpha_3
\end{pmatrix}\]
belongs to the complex tangent to $\partial\cW_\eta$ at $z$ if, and only if,
\[(\overline{z}_1 - e^{-iL})\alpha_1+(i|z_1|^2-iz_1e^{-iL})(\alpha_2+\alpha_3)+(-2 \Im (z_1 e^{-iL})+\phi'(\log|z_2|^2))\alpha_2-2 \Im (z_1 e^{-iL})\alpha_3=0\,.\]
This is the same as
\[(\overline{z}_1 - e^{-iL})\alpha_1=\zeta\,(\alpha_2+\alpha_3)-\phi'(\log|z_2|^2)\alpha_2\,,\quad\zeta:=-i|z_1|^2+i\overline{z}_1e^{iL}=-i\overline{z}_1(z_1-e^{iL})\,.\]
Now, $z\in\partial\cW_\eta$ implies $\big|\overline{z}_1 - e^{-iL}\big|^2=1-\eta^\circ=1-\phi(\log|z_2|^2)$ and $|\zeta|^2=|z_1|^2(1-\phi(\log|z_2|^2))$. Thus,
\begin{align*}
&(1-\phi(\log|z_2|^2))^{-1}\,|\alpha_1|^2=\big|\zeta(\alpha_2+\alpha_3)-\phi'(\log|z_2|^2)\alpha_2\big|^2\\
&=|\zeta-\phi'(\log|z_2|^2)\big|^2\,|\alpha_2|^2+|\zeta|^2\,|\alpha_3|^2+(\zeta-\phi'(\log|z_2|^2))\overline{\zeta}\alpha_2\overline{\alpha}_3+\zeta(\overline{\zeta}-\phi'(\log|z_2|^2))\alpha_3\overline{\alpha}_2\,.
\end{align*}
Overall,
\[h_{M+N}(\alpha_1{\bf p}+\alpha_2{\bf n}_2+\alpha_3{\bf n}_3,\alpha_1{\bf p}+\alpha_2{\bf n}_2+\alpha_3{\bf n}_3)=\begin{pmatrix}\alpha_2&\alpha_3\end{pmatrix}\left(B_{z_1}(\log|z_2|^2)+C(\log|z_2|^2)\right)\begin{pmatrix}\overline{\alpha}_2\\\overline{\alpha}_3\end{pmatrix}\,,\]
where
\[B_{z_1}=\frac1{1-\phi}\begin{pmatrix}
|\zeta-\phi'\big|^2&(\zeta-\phi')\overline{\zeta}\\
\zeta(\overline{\zeta}-\phi')&|\zeta|^2
\end{pmatrix}\,.\]
Since the diagonal elements of $B_{z_1}+C$ are non-negative, the study of pseudoconvexity of $\cW_\eta$ relies upon the study the sign of $\det(B_{z_1}+C)=\det B_{z_1}+\det C+d=0+\phi\phi''-(\phi')^2+d$, where
\begin{align*}
(1-\phi)\,d&=|\zeta-\phi'\big|^2\phi+|\zeta|^2(\phi+\phi'')-(\zeta-\phi')\overline{\zeta}(\phi-i\phi')-\zeta(\overline{\zeta}-\phi')(\phi+i\phi')\\
&=|\zeta-\phi'-\zeta\big|^2\phi+i[(\zeta-\phi')\overline{\zeta}-\zeta(\overline{\zeta}-\phi')]\phi'+|\zeta|^2\phi''\\
&=\phi(\phi')^2+i(\zeta-\overline{\zeta})(\phi')^2+|\zeta|^2\phi''\\
&=\phi(\phi')^2+(\overline{z}_1(z_1-e^{iL})+z_1(\overline{z}_1-e^{-iL}))(\phi')^2+|z_1|^2(1-\phi)\phi''\\
&=\left[\phi-2\Re(z_1e^{-iL})+2|z_1|^2\right](\phi')^2+|z_1|^2(1-\phi)\,\phi''\,.
\end{align*}
We must therefore study the sign of
\[\det(B_{z_1}+C)=\phi\phi''-(\phi')^2+\frac{\phi-2\Re(z_1e^{-iL})+2|z_1|^2}{1-\phi}(\phi')^2+|z_1|^2\phi''\,.\]
If we choose $z$ so that $z_1=\left(1-\sqrt{1-\phi}\right)e^{iL}$, then $\phi-2\Re(z_1e^{-iL})=\phi-2+2\sqrt{1-\phi}=O(\phi^2)$ and $|z_1|^2=(1-\sqrt{1-\phi})^2=O(\phi^2)$. Using our claim, we get
\[\liminf_{\tau\to\mu^+}\left[\phi(\tau)^{-2}\det\left(B_{\left(1-\sqrt{1-\phi}\right)e^{iL}}(\tau)+C(\tau)\right)\right]=\liminf_{\tau\to\mu^+}\left[\frac{\phi''(\tau)}{\phi(\tau)}-\left(\frac{\phi'(\tau)}{\phi(\tau)}\right)^2\right]=-\infty\,.\]
It follows that $B_{\left(1-\sqrt{1-\phi}\right)e^{iL}}(\tau)+C(\tau)$ cannot be positive semidefinite for all $\tau\in(\mu,\mu')$ and that $B_{z_1}(\log|z_2|^2)+C(\log|z_2|^2)$ cannot be positive semidefinite for all $z\in\partial\cW_\eta$. Thus, $\cW_\eta$ is not pseudoconvex.

We are left with proving our claim that $\liminf_{\tau\to\mu^+}\left[\frac{\phi''(\tau)}{\phi(\tau)}-\left(\frac{\phi'(\tau)}{\phi(\tau)}\right)^2\right]=-\infty$. The function $\phi$ coincides away from $\phi^{-1}(0)=[-\mu,\mu]$ with $e^f$, for some function $f$. It follows that $\phi'=e^f f'$ and $\phi''=e^f (f''+(f')^2)$, whence $\frac{\phi''(\tau)}{\phi(\tau)}-\left(\frac{\phi'(\tau)}{\phi(\tau)}\right)^2=f''$. Since $\lim_{t\to\mu^+}\phi(t)=0$, we have $\lim_{t\to\mu^+}f(t)=-\infty$, whence
\[\lim_{t\to\mu^+}\int_{t}^{\mu'} f'(\tau)d\tau=\lim_{t\to\mu^+}[f(\mu')-f(t)]=+\infty\,.\]
As a consequence, there exists a sequence $\{s_n\}_{n\in\bbN}\subset(\mu,\mu')$ with $\lim_{n\to+\infty}s_n=\mu$ such that $\lim_{n\to+\infty}f'(s_n)=+\infty$. Thus,
\[\lim_{n\to+\infty}\int_{s_n}^{\mu'} f''(\tau)d\tau=\lim_{n\to+\infty}[f'(\mu')-f'(s_n)]=-\infty\]
This implies that $\liminf_{\tau\to\mu^+}f''(\tau)=-\infty$, as desired.
\end{proof}

The computations made in the last proof motivate the next remark and the subsequent proposition.

\begin{remark} {\em 
Convexity of $\eta$ suffices to guarantee
inequality~\eqref{eq:pseudoconvex1} but not
inequality~\eqref{eq:pseudoconvex3}.
}
\end{remark}

In the case $\eta(t_2,t_3)=\phi(t_2)+\psi(t_3)$, condition~\eqref{eq:pseudoconvex3} would have required \emph{logarithmic} convexity of $\phi$, which would have in turn prevented $\phi$ from vanishing identically in $[-\mu,\mu]$.

\begin{proposition}\label{prop:logarithmic}
Assume $\eta=\chi_U e^f$ for some open subset $U$ of $\bbR^2$ and some smooth function $f:U\to\bbR$. Set $f^\circ:=f(\log|z_2|^2,\log|z_3|^2)$, $f'_{j}:=\frac{\partial f}{\partial t_j}(\log|z_2|^2,\log|z_3|^2)$ and $f''_{j,k}:=\frac{\partial^2f}{\partial t_j\partial t_k}(\log|z_2|^2,\log|z_3|^2)$. Then the equalities
\begin{align*}
\eta^\circ&=e^{f^\circ}\,,\\
\eta'_j&=\eta^\circ f'_j\,,\\
\eta''_{j,k}&=\eta^\circ(f'_jf'_k+f''_{j,k})\,,
\end{align*}
hold true in $U$ for all $j,k\in\{2,3\}$. If we set $H_f:=\begin{pmatrix}f''_{22}&f''_{23}\\f''_{23}&f''_{33}\end{pmatrix}$, $v:=\begin{pmatrix}f'_{3}\\-f'_{2}\end{pmatrix}$, $u:=\begin{pmatrix}1\\-1\end{pmatrix}$ then, for the terms appearing in inequality~\eqref{eq:pseudoconvex3}, we have
\begin{align*}
&\eta''_{22}\eta''_{33}-(\eta''_{23})^2=\det H_\eta=(\eta^\circ)^2[h_{H_f}(v,v)+\det H_f]\\
&\eta^\circ(\eta''_{22}+\eta''_{33}-2\eta''_{23})-(\eta'_2-\eta'_3)^2=\eta^\circ h_{H_\eta}(u,u)-(\eta'_2-\eta'_3)^2=(\eta^\circ)^2 h_{H_f}(u,u)\,.
\end{align*}
Therefore, in $U$ inequalities~\eqref{eq:pseudoconvex1},~\eqref{eq:pseudoconvex3} are equivalent, respectively, to
\begin{align}
&(f'_2)^2+f''_{22}+1\geq0\,,\label{eq:pseudoconvex4}\\
&h_{H_f}(v,v)+\det H_f+h_{H_f}(u,u)\geq0\,.\label{eq:pseudoconvex6}
\end{align}
\end{proposition}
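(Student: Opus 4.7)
\medskip
\noindent\textbf{Proof proposal.} My plan is to verify the three pointwise identities for $\eta^\circ,\eta'_j,\eta''_{j,k}$ by a direct application of the chain rule to $\eta = e^f$, and then to substitute them into the matrix expressions to recover the claimed identities for $\det H_\eta$ and for $\eta^\circ h_{H_\eta}(u,u)-(\eta'_2-\eta'_3)^2$. Once this is done, the equivalences of the pseudoconvexity inequalities will drop out after factoring the strictly positive quantities $\eta^\circ=e^{f^\circ}$ and $(\eta^\circ)^2$.

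First I would differentiate $e^f$ once and twice on $U$: the chain rule immediately yields $\eta'_j=e^{f^\circ}f'_j$ and, upon a second differentiation, $\eta''_{j,k}=e^{f^\circ}(f'_jf'_k+f''_{j,k})$. In matrix form, this gives $H_\eta=\eta^\circ\bigl(f'(f')^t+H_f\bigr)$ where $f':=(f'_2,f'_3)^t$. From here I would compute $\det H_\eta$ directly, noting that adding the rank-one piece $f'(f')^t$ to $H_f$ modifies the determinant by $(f'_2)^2f''_{33}+(f'_3)^2f''_{22}-2f'_2f'_3f''_{23}=v^tH_fv=h_{H_f}(v,v)$ (recall $v$ is real, so $\bar v=v$), which produces the stated formula for $\det H_\eta$.

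Next I would handle the second identity by expanding $h_{H_\eta}(u,u)=\eta''_{22}-2\eta''_{23}+\eta''_{33}$ using the expressions for $\eta''_{j,k}$. The purely quadratic piece in the $f'_j$'s gathers into $(f'_2-f'_3)^2$, and the remaining piece is exactly $h_{H_f}(u,u)$. Multiplying by $\eta^\circ$ and subtracting $(\eta'_2-\eta'_3)^2=(\eta^\circ)^2(f'_2-f'_3)^2$ causes the $(f'_2-f'_3)^2$ terms to cancel, leaving $(\eta^\circ)^2 h_{H_f}(u,u)$.

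Finally, to obtain the equivalences \eqref{eq:pseudoconvex1}$\Leftrightarrow$\eqref{eq:pseudoconvex4} and \eqref{eq:pseudoconvex3}$\Leftrightarrow$\eqref{eq:pseudoconvex6} in $U$, I would rewrite $\eta^\circ+\eta''_{22}=e^{f^\circ}\bigl(1+(f'_2)^2+f''_{22}\bigr)$ and sum the two previously obtained identities to get that the left-hand side of \eqref{eq:pseudoconvex3} equals $(\eta^\circ)^2\bigl[h_{H_f}(v,v)+\det H_f+h_{H_f}(u,u)\bigr]$; since $e^{f^\circ}>0$ throughout $U$, both factorizations can be divided away. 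There is no real obstacle here: the whole proof is bookkeeping, and the only step that requires a small amount of care is keeping track of signs and cross-terms when expanding the $2\times 2$ determinant of the rank-one perturbation $\eta^\circ(f'(f')^t+H_f)$.
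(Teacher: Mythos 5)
Your proposal is correct and follows essentially the same route as the paper: the chain-rule identities $\eta'_j=\eta^\circ f'_j$, $\eta''_{j,k}=\eta^\circ(f'_jf'_k+f''_{j,k})$, followed by direct expansion of $\det H_\eta$ and of $\eta^\circ h_{H_\eta}(u,u)-(\eta'_2-\eta'_3)^2$, and then division by the positive factors $e^{f^\circ}$ and $(\eta^\circ)^2$ on $U$. Your phrasing of the determinant step as a rank-one perturbation of $H_f$ (with the update contributing exactly $h_{H_f}(v,v)$) is just a compact repackaging of the same expansion the paper carries out term by term, so there is nothing to add.
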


\begin{proof}
We compute
\begin{align*}
\eta''_{22}\eta''_{33}-(\eta''_{23})^2&=(\eta^\circ)^2\{[(f'_2)^2+f''_{22}][(f'_3)^2+f''_{33}]-(f'_2f'_3+f''_{23})^2\}\\
&=(\eta^\circ)^2[(f'_2)^2f''_{33}+(f'_3)^2f''_{22}-2f'_2f'_3f''_{23}+f''_{22}f''_{33}-(f''_{23})^2]\\
&=(\eta^\circ)^2[v^tH_fv+\det H_f]
\end{align*}
and
\begin{align*}
\eta^\circ(\eta''_{22}+\eta''_{33}-2\eta''_{23})-(\eta'_2-\eta'_3)^2&=(\eta^\circ)^2[(f'_2)^2+f''_{22}+(f'_3)^2+f''_{33}-2f'_2f'_3-2f''_{23}-(f'_2-f'_3)^2]\\
&=(\eta^\circ)^2[f''_{22}+f''_{33}-2f''_{23}]\\
&=(\eta^\circ)^2u^tH_fu\,.\qedhere
\end{align*}
\end{proof}

The last result is useful to prove that the class $\mathscr{C}_\mu$ is not empty, thus finding examples of smoothly bounded pseudoconvex domains $\cW_\eta$. The next proposition will also be useful in the construction.

\begin{proposition}\label{prop:decomposition}
Let $U_0,\ldots,U_l$ be open subsets of $\bbR^2$, take smooth functions $\eta_0,\ldots,\eta_l:\bbR^2\to\bbR$ and set
\[\eta:=\eta_0+\ldots+\eta_l\,.\]
If each $\eta_k$ fulfils inequality~\eqref{eq:pseudoconvex1} separately, then $\eta$ itself fulfils inequality~\eqref{eq:pseudoconvex1}. This is automatically true if $\eta_0,\ldots,\eta_l$ are convex. If we assume $l=1$, i.e., $\eta:=\eta_0+\eta_1$, we can compute the quantity
\[\cP(\eta)=\det H_\eta+\eta^\circ h_{H_\eta}(u,u)-(\eta'_2-\eta'_3)^2\] appearing on the left-hand side of inequality~\eqref{eq:pseudoconvex3} as follows:
\begin{align*}
\cP(\eta)&=\cP(\eta_0)+\cP(\eta_1)+\eta_0^\circ \det H_{\eta_1}+\eta_1^\circ \det H_{\eta_0}\\
&\quad+(\eta_0)''_{22}(\eta_1)''_{33}+(\eta_1)''_{22}(\eta_0)''_{33}-2(\eta_0)''_{23}(\eta_1)''_{23}-2\left[(\eta_0)'_2-(\eta_0)'_3\right]\left[(\eta_1)'_2-(\eta_1)'_3\right]\,.
\end{align*}
\end{proposition}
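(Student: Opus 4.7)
The first assertion is an immediate consequence of linearity: the operations $\eta \mapsto \eta^\circ$ and $\eta \mapsto \eta''_{22}$ are both linear in $\eta$, so summing the inequalities $(\eta_k)^\circ + (\eta_k)''_{22} \geq 0$ over $k = 0, \ldots, l$ yields $\eta^\circ + \eta''_{22} \geq 0$. When each $\eta_k$ is convex (and, implicitly, non-negative, as needed for the resulting $\eta$ to take values in $[0, +\infty)$), each of the two summands is pointwise non-negative on its own, so the inequality holds term-by-term.

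For the identity in the case $l = 1$, the natural strategy is to split
\[\cP(\eta) = \det H_\eta + \eta^\circ h_{H_\eta}(u,u) - (\eta'_2 - \eta'_3)^2\]
into its three constituents and expand each using the linear splittings $H_\eta = H_{\eta_0} + H_{\eta_1}$, $\eta^\circ = \eta_0^\circ + \eta_1^\circ$, and $\eta'_j = (\eta_0)'_j + (\eta_1)'_j$. For the first summand I would invoke the elementary identity for $2 \times 2$ symmetric matrices
\[\det(A+B) = \det A + \det B + a_{11}b_{22} + a_{22}b_{11} - 2a_{12}b_{12},\]
which directly produces the cross contribution $(\eta_0)''_{22}(\eta_1)''_{33} + (\eta_1)''_{22}(\eta_0)''_{33} - 2(\eta_0)''_{23}(\eta_1)''_{23}$. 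The middle term is bilinear in the pair $(\eta^\circ, H_\eta)$, so expanding it yields the diagonal pieces $\eta_k^\circ h_{H_{\eta_k}}(u,u)$ (to be reabsorbed into $\cP(\eta_k)$) plus the mixed pieces in which one factor comes from $\eta_0$ and the other from $\eta_1$. Finally $(\eta'_2 - \eta'_3)^2$ is the square of the sum of the two "diagonal" differences, contributing the pure squares (again reabsorbed into the $\cP(\eta_k)$'s) together with the cross $-2[(\eta_0)'_2 - (\eta_0)'_3][(\eta_1)'_2 - (\eta_1)'_3]$.

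Regrouping the diagonal pieces into $\cP(\eta_0) + \cP(\eta_1)$ and writing out the remaining mixed terms yields the stated formula. The whole argument is mechanical bookkeeping; the only real risk lies in sign or factor-of-two slips when expanding $\det(H_{\eta_0} + H_{\eta_1})$ (where the off-diagonal cross contribution carries a $-2$) and the double product inside the square of a sum. I would therefore treat these two pieces in isolation, double-check the coefficients, and only then combine everything with the bilinear expansion of the middle term.
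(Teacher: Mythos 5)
Your plan coincides with the paper's own proof: both arguments expand $\det H_\eta$, $\eta^\circ h_{H_\eta}(u,u)$ and $(\eta'_2-\eta'_3)^2$ by (bi)linearity in $\eta=\eta_0+\eta_1$ and regroup the diagonal pieces into $\cP(\eta_0)+\cP(\eta_1)$; your handling of the first claim (linearity of $\eta\mapsto\eta^\circ$ and $\eta\mapsto\eta''_{22}$, plus the observation that convexity alone only gives $(\eta_k)''_{22}\geq0$ and that nonnegativity of $\eta_k$ is what supplies $(\eta_k)^\circ\geq0$) is fine and in fact more careful than the paper's one-line remark.

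There is, however, one point you must not gloss over: the bookkeeping does \emph{not} ``yield the stated formula'' as printed. The bilinear expansion of the middle term produces the mixed contribution $\eta_0^\circ\,h_{H_{\eta_1}}(u,u)+\eta_1^\circ\,h_{H_{\eta_0}}(u,u)$, i.e.\ $\eta_0^\circ\big[(\eta_1)''_{22}+(\eta_1)''_{33}-2(\eta_1)''_{23}\big]+\eta_1^\circ\big[(\eta_0)''_{22}+(\eta_0)''_{33}-2(\eta_0)''_{23}\big]$, whereas the statement writes $\eta_0^\circ\det H_{\eta_1}+\eta_1^\circ\det H_{\eta_0}$; these are genuinely different quantities (one is linear, the other quadratic, in the second derivatives --- take $H_{\eta_1}=\mathrm{diag}(1,0)$ for a counterexample). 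This is a slip in the printed statement, reproduced in the last display of the paper's own proof, where $(\eta_0^\circ+\eta_1^\circ)\big[h_{H_{\eta_0}}(u,u)+h_{H_{\eta_1}}(u,u)\big]$ silently turns its cross terms into determinants. So if you carry out your expansion honestly you will land on the identity with the $h$-terms, and you should state it that way rather than claim the printed formula; asserting the formula as written would make your proof incorrect. The discrepancy is harmless where the proposition is used, namely in the proof of Theorem~\ref{thm:main}: there $\eta_\pm$ are convex near the relevant compact set, so both $\eta_\mp^\circ\det H_{\eta_\pm}$ and $\eta_\mp^\circ h_{H_{\eta_\pm}}(u,u)$ are nonnegative and are discarded in the lower bound, leaving the remaining terms $\cQ(\eta)$ unchanged.
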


\begin{proof}
The first statement follows from the equality
\[\eta^\circ+\eta''_{22}=\sum_{k=1}^l((\eta_k)^\circ+(\eta_k)''_{22})\,.\]
For the second statement, we compute
\begin{align*}
\cP(\eta)&=\det H_\eta+\eta^\circ h_{H_\eta}(u,u)-(\eta'_2-\eta'_3)^2\\
&=\left[(\eta_0)''_{22}+(\eta_1)''_{22}\right]\left[(\eta_0)''_{33}+(\eta_1)''_{33}\right]-\left[(\eta_0)''_{23}+(\eta_1)''_{23}\right]^2\\
&\quad+(\eta_0^\circ+\eta_1^\circ) \left[h_{H_{\eta_0}}(u,u)+h_{H_{\eta_1}}(u,u)\right]\\&
\quad-((\eta_0)'_2+(\eta_1)'_2-(\eta_0)'_3-(\eta_1)'_3)^2\\
&=\cP(\eta_0)+\cP(\eta_1)+(\eta_0)''_{22}(\eta_1)''_{33}+(\eta_1)''_{22}(\eta_0)''_{33}-2(\eta_0)''_{23}(\eta_1)''_{23}\\
&\quad+\eta_0^\circ \det H_{\eta_1}+\eta_1^\circ \det H_{\eta_0}-2((\eta_0)'_2-(\eta_0)'_3)((\eta_1)'_2-(\eta_1)'_3)\,.\qedhere
\end{align*}
\end{proof}

We are now ready to prove that the class $\mathscr{C}_\mu$ is not empty, constructing a class of examples of $\eta$'s such that the corresponding $\cW_\eta$ are smoothly bounded pseudoconvex domains.

\begin{theorem}\label{thm:main}
Pick $A_\pm>B_\pm\geq\sqrt{2e^\mu}$ and let $c_\pm>0$. Let us define $\eta:=\eta_++\eta_-$, where
\begin{align*}
\eta_\pm(t_2,t_3)&:=\chi_{(B_\pm^2,+\infty)}(e^{\pm t_2}+e^{\pm t_3})\,e^{f_\pm(t_2,t_3)}\,,\\
f_\pm(t_2,t_3)&:=\frac{c_\pm}{A_\pm^2-B_\pm^2}-\frac{c_\pm}{e^{\pm t_2}+e^{\pm t_3}-B_\pm^2}\,.
\end{align*}
In other words,
\begin{align*}
\eta^\circ&=\chi_{(B_+^2,+\infty)}(|z_2|^2+|z_3|^2)\,\exp\left(\frac{c_+}{A_+^2-B_+^2}\,\frac{|z_2|^2+|z_3|^2-A_+^2}{|z_2|^2+|z_3|^2-B_+^2}\right)\\
&\quad+\chi_{(B_-^2,+\infty)}(|z_2|^{-2}+|z_3|^{-2})\,\exp\left(\frac{c_-}{A_-^2-B_-^2}\,\frac{|z_2|^{-2}+|z_3|^{-2}-A_-^2}{|z_2|^{-2}+|z_3|^{-2}-B_-^2}\right)\,.
\end{align*}
Then $\cW_\eta$ is a smoothly bounded domain in $\bbC^3$ that contains $\cW_\mu'$ (and is contained in $\cW'_{\mu'}$ for $\mu'\geq2\log A_+,2\log A_-$). Moreover, there exists $C_+\geq A_+^2$ such that, for every $c_+>C_+$, there exist $C_-=C_-(c_+)\geq A_-^2$ such that, for every $c_->C_-$, the domain $\cW_\eta$ is pseudoconvex.
\end{theorem}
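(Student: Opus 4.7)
The plan is to verify the hypotheses (I),(II),(III) of Proposition~\ref{prop:complextangent}, the two inclusions, and the pseudoconvexity inequalities \eqref{eq:pseudoconvex1},\eqref{eq:pseudoconvex3} of Theorem~\ref{thm:pseudoconvex}, via the log-decomposition of Proposition~\ref{prop:logarithmic} and the additivity formula of Proposition~\ref{prop:decomposition}. Throughout I abbreviate $S_\pm:=e^{\pm t_2}+e^{\pm t_3}$ and $\tau_\pm:=S_\pm-B_\pm^2$; on the support of $\eta_\pm$ one can write $f_\pm=g_\pm(S_\pm)$ with $g_\pm(S)=c_\pm/(A_\pm^2-B_\pm^2)-c_\pm/(S-B_\pm^2)$, so $g_\pm'(S_\pm)=c_\pm/\tau_\pm^2>0$ and $g_\pm''(S_\pm)=-2c_\pm/\tau_\pm^3<0$.

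First I handle smoothness, boundedness, and the two inclusions. The inverse-distance singularity $-c_\pm/\tau_\pm$ forces $e^{f_\pm}$ to extend smoothly by zero across $\{\tau_\pm=0\}$, giving (I). For (III) and the outer inclusion $\cW_\eta\subset\cW'_{\mu'}$, the condition $\eta_\pm\leq 1$ is equivalent to $\tau_\pm\leq A_\pm^2-B_\pm^2$, i.e.\ $S_\pm\leq A_\pm^2$, so $t_j\in[-2\log A_-,\,2\log A_+]\subseteq(-\mu',\mu')$; the inner inclusion $\cW_\mu'\subset\cW_\eta$ uses $B_\pm^2\geq 2e^\mu$ to force $S_\pm<B_\pm^2$ throughout $(-\mu,\mu)^2$, so that $\eta\equiv 0$ there. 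Condition (II) needs an argument: at any point of $\eta^{-1}(1)$ one computes $\partial\eta/\partial t_j=\eta_+\,\partial f_+/\partial t_j+\eta_-\,\partial f_-/\partial t_j$ and uses that $\nabla f_+$ has both components strictly positive and $\nabla f_-$ has both strictly negative, so $\nabla\eta=0$ forces $t_2=t_3$; but at a diagonal point the constraint $B_+^2 B_-^2\geq 4e^{2\mu}>4$ makes the supports of $\eta_+$ and $\eta_-$ disjoint, so only one of them is nonzero near such a point and its gradient is nonzero.

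For pseudoconvexity, the first part of Proposition~\ref{prop:decomposition} reduces \eqref{eq:pseudoconvex1} to the same inequality for $\eta_\pm$ separately; where $\eta_\pm>0$, Proposition~\ref{prop:logarithmic} converts this into \eqref{eq:pseudoconvex4} for $f_\pm$, and a direct calculation gives
\[(f'_{+,2})^2+f''_{+,22}+1=\frac{c_+^2 e^{2t_2}}{\tau_+^4}-\frac{2c_+ e^{2t_2}}{\tau_+^3}+\frac{c_+ e^{t_2}}{\tau_+^2}+1,\]
which is nonnegative as soon as $c_+\geq 2\tau_+$; since $\tau_+\leq A_+^2-B_+^2<A_+^2$ on $\{\eta_+\leq 1\}$, choosing $C_+\geq 2A_+^2$ suffices, and the symmetric argument handles $\eta_-$.

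The main work, and the source of the iterated quantifier in the statement, lies in \eqref{eq:pseudoconvex3}. The second part of Proposition~\ref{prop:decomposition} gives $\cP(\eta)=\cP(\eta_+)+\cP(\eta_-)+E$, where $E$ collects the six cross terms. By Proposition~\ref{prop:logarithmic}, $\cP(\eta_\pm)=(\eta_\pm^\circ)^2\Phi_\pm$, where $\Phi_\pm$ is a cubic polynomial in $c_\pm$ whose leading coefficient $e^{\pm(t_2+t_3)}S_\pm/\tau_\pm^6$ is strictly positive; hence $\Phi_\pm\geq 0$, indeed of order $c_\pm^3$, once $c_\pm$ is large enough. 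Each summand of $E$ factors into the product of an $\eta_+^\circ$-factor involving a polynomial of degree at most $2$ in $c_+$ and an $\eta_-^\circ$-factor involving a polynomial of degree at most $2$ in $c_-$: more precisely, $(\eta_\pm)'$ are of size $c_\pm\eta_\pm^\circ$, $(\eta_\pm)''$ of size $c_\pm^2\eta_\pm^\circ$, and $\det H_{\eta_\pm}$ of size $c_\pm^3(\eta_\pm^\circ)^2$, so $|E|\leq P(c_+,c_-)\,Q(\eta_+^\circ,\eta_-^\circ)$ for explicit polynomials $P,Q$. The obstacle is that the $c_+^2 c_-^2$-type term of $E$ is not obviously absorbed by $\cP(\eta_+)$ or $\cP(\eta_-)$ alone; the resolution is exactly the nested choice in the statement. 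First pick $c_+>C_+$ large enough that $\cP(\eta_+)\geq K c_+^3(\eta_+^\circ)^2$ holds and absorbs the $c_-$-independent part of $E$; then, with $c_+$ fixed, pick $c_->C_-(c_+)$ so large that $K c_-^3 (\eta_-^\circ)^2\leq\cP(\eta_-)$ dominates the remaining $c_-$-polynomial contribution in $E$, using $\eta_+^\circ\leq 1$ throughout the overlap $\{\eta_+>0\}\cap\{\eta_->0\}$. This yields $\cP(\eta)\geq 0$ on $\{\eta\leq 1\}$ and completes the verification.
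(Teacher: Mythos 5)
Your handling of (I)--(III), of the two inclusions, and of inequality~\eqref{eq:pseudoconvex1} is essentially the paper's own argument and is fine (for (II), the signs of the components of $\nabla f_\pm$ only show that cancellation requires both $\eta_+>0$ and $\eta_->0$; the conclusion $t_2=t_3$ then comes from the proportionality $e^{t_2}/e^{t_3}=e^{-t_2}/e^{-t_3}$, a one-line computation, so this is merely a presentational shortcut). The genuine gap is in your treatment of \eqref{eq:pseudoconvex3} on the overlap $\{\eta_+>0\}\cap\{\eta_->0\}$: you bound the cross contribution $E$ in absolute value and propose to absorb it into $\cP(\eta_+)+\cP(\eta_-)$ (plus the nonnegative terms $\eta_\mp^\circ\det H_{\eta_\pm}$) via the nested choice of $c_+$ and then $c_-$. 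This cannot be closed. With your notation $\tau_\pm=S_\pm-B_\pm^2$, the terms such as $(\eta_+)''_{22}(\eta_-)''_{33}$ are of size $\eta_+^\circ\eta_-^\circ\,c_+^2c_-^2\,\tau_+^{-4}\tau_-^{-4}$, while the nonnegative terms at your disposal are of size $(\eta_+^\circ)^2c_+^3\tau_+^{-6}$ and $(\eta_-^\circ)^2c_-^3\tau_-^{-6}$ (the helpers $\eta_\mp^\circ\det H_{\eta_\pm}$ carry an extra factor of $\eta_\pm^\circ$ and do not change the count). A pointwise bound $a(\eta_+^\circ)^2c_+^3+b(\eta_-^\circ)^2c_-^3\geq C\,\eta_+^\circ\eta_-^\circ c_+^2c_-^2$ fails for large $c_\pm$: at points of the overlap near $\partial\cW_\eta$ where $\eta_+^\circ$ is of order $1$ and $\eta_-^\circ\approx(c_+/c_-)^{3/2}$ (such points exist, on the level curve of $\eta_-$ close to $S_-=A_-^2$ with $S_+$ in the interior of $(B_+^2,A_+^2)$), the right-hand side is of order $c_+^{7/2}c_-^{1/2}$ while the left-hand side is of order $c_+^3$, no matter how $c_-$ is chosen after $c_+$. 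So if the degree-$(2,2)$ cross terms are estimated only in absolute value, the final absorption step is false, not just delicate.

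What rescues the theorem, and what the paper actually uses, is the \emph{sign} of those terms. Because $B_+^2B_-^2>4$, the two supports only overlap away from the diagonal $t_2=t_3$ (the same fact you invoked for (II)), so $x:=e^{t_2-t_3}$ is bounded away from $1$ on the overlap. A direct computation shows that the four second-order cross terms combine at top order into $\eta_+^\circ\eta_-^\circ\,\tau_+^{-4}\tau_-^{-4}\,(x-x^{-1})^2\,c_+^2c_-^2$, i.e.\ the dangerous-looking $c_+^2c_-^2$ contribution is \emph{nonnegative}, with coefficient bounded below on the compact overlap; the genuinely negative cross contributions are only of the lower orders $c_+^2c_-$, $c_+c_-^2$, $c_+c_-$. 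The nested choice of $c_+$ and then $c_-$ is needed, not to absorb the $c_+^2c_-^2$ term, but to place $(c_+,c_-)$ to the right of the branch of the rectangular hyperbola $\alpha_{11}c_+c_-+\alpha_{10}c_++\alpha_{01}c_-+\alpha_{00}=0$ with $\alpha_{11}=(x-x^{-1})^2>0$, uniformly over the overlap, which makes the entire cross contribution nonnegative; combined with $\cP(\eta_\pm)\geq0$ and $\det H_{\eta_\pm}\geq0$ this yields $\cP(\eta)\geq0$ and hence \eqref{eq:pseudoconvex3}. Your draft is missing exactly this sign analysis, and without it the concluding step of your argument does not go through.
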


\begin{proof}
The functions $\eta,\eta_+,\eta_-$ are continuous because $f_\pm(t_2,t_3)\to-\infty$ as $e^{\pm t_2}+e^{\pm t_3}\to B_\pm^2$. Moreover, $\eta,\eta_+,\eta_-$ are $C^2$ because
\begin{align*}
\frac{\partial \eta_\pm}{\partial t_j}(t_2,t_3)&=\chi_{(B_\pm^2,+\infty)}(e^{\pm t_2}+e^{\pm t_3})e^{f_\pm(t_2,t_3)}\frac{\partial f_\pm}{\partial t_j}(t_2,t_3)\,,\\
\frac{\partial^2 \eta_\pm}{\partial t_j\partial t_k}(t_2,t_3)&=\chi_{(B_\pm^2,+\infty)}(e^{\pm t_2}+e^{\pm t_3})e^{f_\pm(t_2,t_3)}\left(\frac{\partial f_\pm}{\partial t_j}\frac{\partial f_\pm}{\partial t_k}+\frac{\partial^2 f_\pm}{\partial t_j\partial t_k}\right)(t_2,t_3)\,,
\end{align*}
where
\begin{align*}
\frac{\partial f_\pm}{\partial t_j}(t_2,t_3)&=\frac{\pm c_\pm e^{\pm t_j}}{(e^{\pm t_2}+e^{\pm t_3}-B_\pm^2)^2}\\
\frac{\partial^2 f_\pm}{\partial t_2^2}(t_2,t_3)&=\frac{c_\pm e^{\pm t_2}(e^{\pm t_2}+e^{\pm t_3}-B_\pm^2)^2-c_\pm e^{\pm t_2}2(e^{\pm t_2}+e^{\pm t_3}-B_\pm^2)e^{\pm t_2}}{(e^{\pm t_2}+e^{\pm t_3}-B_\pm^2)^4}\\
&=c_\pm e^{\pm t_2}\frac{-e^{\pm t_2}+e^{\pm t_3}-B_\pm^2}{(e^{\pm t_2}+e^{\pm t_3}-B_\pm^2)^3}\\
\frac{\partial^2 f_\pm}{\partial t_3^2}(t_2,t_3)&=c_\pm e^{\pm t_3}\frac{e^{\pm t_2}-e^{\pm t_3}-B_\pm^2}{(e^{\pm t_2}+e^{\pm t_3}-B_\pm^2)^3}\\
\frac{\partial^2 f_\pm}{\partial t_2\partial t_3}(t_2,t_3)&=\frac{-2c_\pm e^{\pm t_2}e^{\pm t_3}}{(e^{\pm t_2}+e^{\pm t_3}-B_\pm^2)^3}\,.
\end{align*}
Similar reasonings apply to successive derivatives and prove that $\eta,\eta_+,\eta_-$ are smooth. Clearly, the gradient of $\eta_\pm$ never vanishes where $\eta_\pm\neq0$.

The function $\eta$ vanishes identically in the (compact) intersection of the sets
\begin{align*}
(\eta_+)^{-1}(0)&=\{(t_2,t_3)\in\bbR^2:e^{t_2}+e^{t_3}\leq B_+^2\}\,,\\
(\eta_-)^{-1}(0)&=\{(t_2,t_3)\in\bbR^2:e^{-t_2}+e^{-t_3}\leq B_-^2\}\,,
\end{align*}
which includes the square $(-\mu,\mu)\times(-\mu,\mu)$ because $e^{-\mu}<e^{t_2},e^{t_3}< e^{\mu}$ implies $e^{t_2}+e^{t_3}<2e^{\mu}\leq B_+^2$ and $e^{-t_2}+e^{-t_3}<2e^{\mu}\leq B_-^2$. In particular, $\cW'_\mu\subset\cW_\eta$.
The function $\eta$ takes positive values wherever $\eta_+$ or $\eta_-$ is strictly positive. Clearly, each positive level set of $\eta_+$ is a straight line $e^{t_2}+e^{t_3}=l_+>B_+^2>2$ in the $(e^{t_2},e^{t_3})$-quadrant, while each positive level set of $\eta_- $ is a hyperbola $e^{-t_2}+e^{-t_3}=l_->B_-^2>2$ (or equivalently $e^{t_2}e^{t_3}-l_-^{-1}(e^{t_2}+e^{t_3})=0$) in the $(e^{t_2},e^{t_3})$-quadrant. This straight line and this hyperbola intersect at exactly two points, namely
\[\left(e^{t_2},e^{t_3}\right)=\left(\frac{l_+\pm\sqrt{\Delta}}{2},\frac{l_+\mp\sqrt{\Delta}}{2}\right),\quad \Delta:=l_+^2-4l_+l_-^{-1}=l_+l_-^{-1}(l_+l_--4)>0\,.\]
We point out, for future reference, that $\eta_\pm(t_2,t_3)>0$ with $e^{t_2}=e^{t_3}$ implies $\eta_\mp(t_2,t_3)=0$.
\begin{figure}[t]
\begin{center}
\includegraphics[height=7cm]{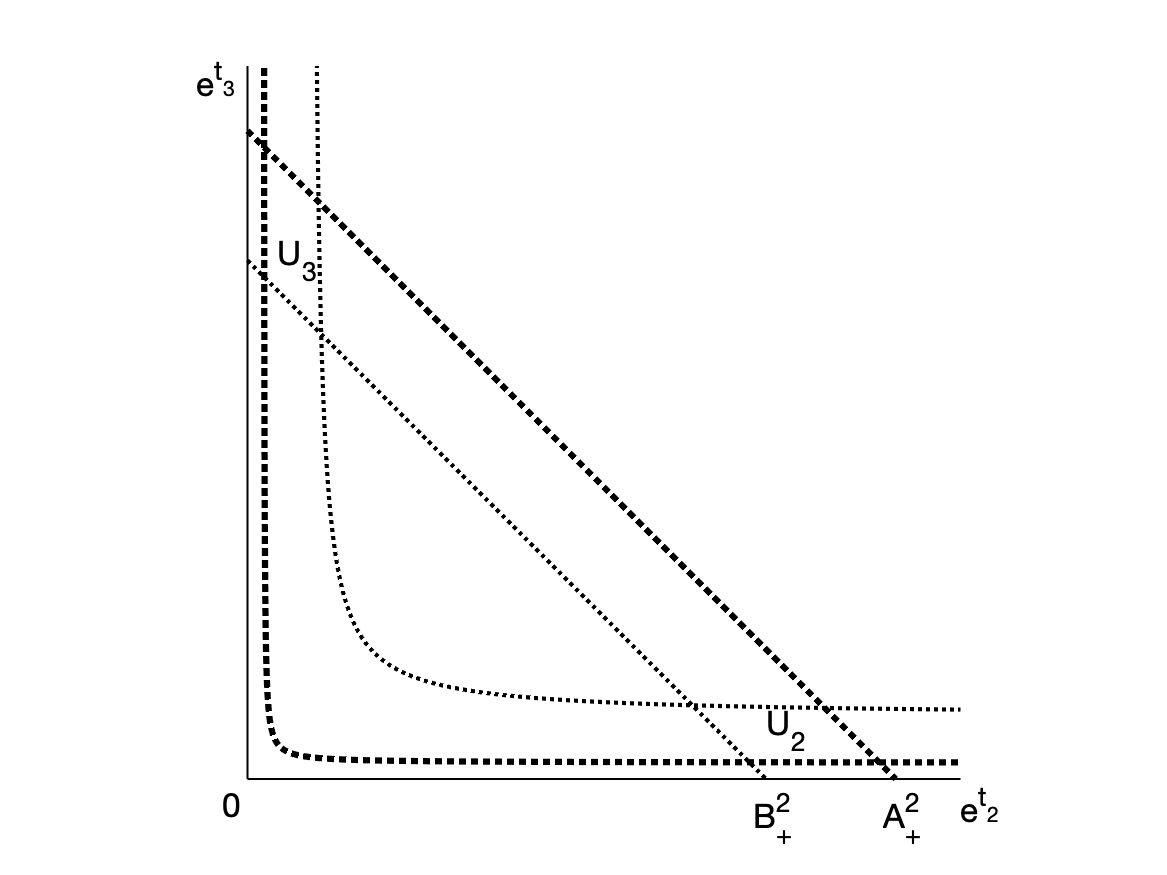}
\end{center}
\caption{The curves $e^{t_2}+e^{t_3}=A_+^2,e^{-t_2}+e^{-t_3}=A_-^2$ (in bold dots) and $e^{t_2}+e^{t_3}=B_+^2,e^{-t_2}+e^{-t_3}=B_-^2$ (in finer dots) within the $(e^{t_2},e^{t_3})$-quadrant.}
\label{fig1}
\end{figure}

The set 
\[(\eta_+)^{-1}((0,1])=\{(t_2,t_3)\in\bbR^2:B_+^2<e^{t_2}+e^{t_3}\leq A_+^2\}\,,\]
corresponds to the set between the dotted straight lines in Figure~\ref{fig1}, while the set
\[(\eta_-)^{-1}((0,1])=\{(t_2,t_3)\in\bbR^2:B_-^2<e^{-t_2}+e^{-t_3}\leq A_-^2\}\,,\]
corresponds to the set between the two dotted hyperbola branches in Figure~\ref{fig1} (bold curve included, finer curve excluded). The intersection $(\eta_+)^{-1}((0,1])\cap(\eta_-)^{-1}((0,1])$ has two connected components $U_2,U_3$, with
\begin{equation}\label{eq:connectedcomponents}
U_2\subseteq\{(t_2,t_3)\in\bbR^2:t_3<t_2\},\quad U_3\subseteq\{(t_2,t_3)\in\bbR^2:t_2<t_3\}\,.
\end{equation}
The compact set $\eta^{-1}([0,1])$ is properly included in the (compact) intersection between the sets
\begin{align*}
(\eta_+)^{-1}([0,1])&=\{(t_2,t_3)\in\bbR^2:e^{t_2}+e^{t_3}\leq A_+^2\}\,,\\
(\eta_-)^{-1}([0,1])&=\{(t_2,t_3)\in\bbR^2:e^{-t_2}+e^{-t_3}\leq A_-^2\}\,.
\end{align*}
In particular, if $\eta(t_2,t_3)\in[0,1]$, then $A_-^{-2}\leq e^{t_2},e^{t_3}\leq A_+^2$, whence $-2\log A_-<t_2,t_3<2\log A_+$. Thus, $\cW_\eta\subset\cW'_{\mu'}$ for $\mu'\geq2\log A_+,2\log A_-$.

More precisely, the simple closed curve $\eta^{-1}(1)$ bounding the compact set $\eta^{-1}([0,1])$ is the union of four arcs $\gamma_+,\gamma_-,\gamma_2,\gamma_3$, where
\[\gamma_+:\left\{\begin{array}{l}
e^{t_2}+e^{t_3}=A_+^2\\
e^{-t_2}+e^{-t_3}\leq B_-^2
\end{array}\right.\]
is part of the bold straight line in Figure~\ref{fig1},
\[\gamma_-:\left\{\begin{array}{l}
e^{-t_2}+e^{-t_3}=A_-^2\\
e^{t_2}+e^{t_3}\leq B_+^2
\end{array}\right.\]
is part of the bold hyperbola in Figure~\ref{fig1}, while the arcs
\[\gamma_j:\left\{\begin{array}{l}
e^{f^{-}(t_2,t_3)}+e^{f^{+}(t_2,t_3)}=1\\
(t_2,t_3)\in U_j
\end{array}\right.\]
(with $j\in\{2,3\}$) are not drawn in Figure~\ref{fig1}. Clearly, the gradient of $\eta$ never vanishes in $\gamma_+$, where it equals the gradient of $\eta_+$, nor in $\gamma_-$, where it equals the gradient of $\eta_-$.  Moreover, the gradient of $\eta$ never vanishes in $\gamma_2,\gamma_3$ because it never vanishes in $U_2\cup U_3$: if it did, then
\begin{align*}
e^{f_+(t_2,t_3)}\frac{c_+ e^{t_2}}{(e^{t_2}+e^{t_3}-B_+^2)^2}-e^{f_-(t_2,t_3)}\frac{c_- e^{-t_2}}{(e^{-t_2}+e^{- t_3}-B_-^2)^2}&=0
\end{align*}
and
\begin{align*}
e^{f_+(t_2,t_3)}\frac{c_+ e^{t_3}}{(e^{t_2}+e^{t_3}-B_+^2)^2}-e^{f_-(t_2,t_3)}\frac{c_- e^{-t_3}}{(e^{-t_2}+e^{- t_3}-B_-^2)^2}&=0\,,
\end{align*}
whence
\[e^{2t_2}=\frac{c_-}{c_+}\frac{e^{f_-(t_2,t_3)}}{e^{f_+(t_2,t_3)}}\frac{(e^{t_2}+e^{t_3}-B_+^2)^2}{(e^{-t_2}+e^{- t_3}-B_-^2)^2}=e^{2t_3}\]
and $t_2=t_3$. We would thus find a contradiction with the inclusions~\eqref{eq:connectedcomponents}. We have therefore proven that the gradient of $\eta$ never vanishes in $\eta^{-1}(1)$. Overall, all assumptions (I),(II),(III) are fulfilled. By Theorem~\ref{thm:pseudoconvex}, the set $\cW_\eta$ is a smoothly bounded domain in $\bbC^3$, which is pseudoconvex if, and only if, inequalities~\eqref{eq:pseudoconvex1},~\eqref{eq:pseudoconvex3} are fulfilled near $\partial\cW_\eta$. We will now prove that, for sufficiently large $c_+$, the function $\eta_+$ is convex in a neighborhood of $(\eta_+)^{-1}([0,1])$. Then we will prove an analogous statement for $\eta_-$. This will allow us to easily address inequality~\eqref{eq:pseudoconvex1}, thanks to Proposition~\ref{prop:decomposition}. Later, we will study inequality~\eqref{eq:pseudoconvex3} for $\eta$.

Omitting the sign $+$ in $f_+,c_+,A_+,B_+$ for the sake of readability, we have
\begin{align*}
f'_j&=\frac{c|z_j|^2}{(|z_2|^2+|z_3|^2-B^2)^2}\\
f''_{22}&=c|z_2|^2\frac{-|z_2|^2+|z_3|^2-B^2}{(|z_2|^2+|z_3|^2-B^2)^3}\\
f''_{33}&=c|z_3|^2\frac{|z_2|^2-|z_3|^2-B^2}{(|z_2|^2+|z_3|^2-B^2)^3}\\
f''_{23}&=c\frac{-2|z_2|^2|z_3|^2}{(|z_2|^2+|z_3|^2-B^2)^3}\,.
\end{align*}
The inequality $(\eta_+)''_{22}\geq0$ is trivial where $\eta_+^\circ$ vanishes and equivalent to $(f'_2)^2+f''_{22}\geq0$ elsewhere by Proposition~\ref{prop:logarithmic}. In order to study this last inequality, we compute
\begin{align*}
c^{-1}|z_2|^{-4}(|z_2|^2+|z_3|^2-B^2)^4\left[(f'_2)^2+f''_{22}\right]&=c+|z_2|^{-2}(-|z_2|^2+|z_3|^2-B^2)(|z_2|^2+|z_3|^2-B^2)\\
&=c-|z_2|^2+|z_2|^{-2}(|z_3|^2-B^2)^2\,.
\end{align*}
The inequality $(f'_2)^2+f''_{22}\geq0$ is certainly true when $c\geq|z_2|^2$. Choosing $c=c_+> A_+^2$ guarantees that $(\eta_+)''_{22}\geq0$ in a neighborhood of $(\eta_+^\circ)^{-1}([0,1])$. We now study the inequality $\det H_{\eta_+}\geq0$, which is trivial where $\eta_+^\circ$ vanishes and equivalent to the inequality $h_{H_f}(v,v)+\det H_f\geq0$ elsewhere by Proposition~\ref{prop:logarithmic}. Here,
\[H_f=\frac{c}{(|z_2|^2+|z_3|^2-B^2)^3}
\begin{pmatrix}
|z_2|^2(-|z_2|^2+|z_3|^2-B^2)&-2|z_2|^2|z_3|^2\\
-2|z_2|^2|z_3|^2&|z_3|^2(|z_2|^2-|z_3|^2-B^2)
\end{pmatrix}\]
and
\[v=v_+:=\begin{pmatrix}f'_{3}\\-f'_{2}\end{pmatrix}=\frac{c}{(|z_2|^2+|z_3|^2-B^2)^2}\begin{pmatrix}|z_3|^2\\-|z_2|^2\end{pmatrix}\,.\]
Recalling the notation set in formula~\eqref{eq:hermitianform}, we compute
\begin{align*}
&c^{-3}|z_2|^{-2}|z_3|^{-2}(|z_2|^2+|z_3|^2-B^2)^7h_{H_f}(v,v)\\
&=|z_2|^{-2}|z_3|^{-2}\left(c^{-1}(|z_2|^2+|z_3|^2-B^2)^2v^t\right)\left(c^{-1}(|z_2|^2+|z_3|^2-B^2)^3H_f\right)\\
&\quad\cdot\left(c^{-1}(|z_2|^2+|z_3|^2-B^2)^2\bar v\right)\\
&=|z_2|^{-2}|z_3|^{-2}(|z_3|^2,-|z_2|^2)\begin{pmatrix}
|z_2|^2|z_3|^2(-|z_2|^2+|z_3|^2-B^2)+2|z_2|^4|z_3|^2\\
-2|z_2|^2|z_3|^4-|z_2|^2|z_3|^2(|z_2|^2-|z_3|^2-B^2)
\end{pmatrix}\\
&=|z_3|^2(-|z_2|^2+|z_3|^2-B^2)+2|z_2|^2|z_3|^2+2|z_2|^2|z_3|^2+|z_2|^2(|z_2|^2-|z_3|^2-B^2)\\
&=|z_3|^2(-|z_2|^2+|z_3|^2-B^2+2|z_2|^2)+|z_2|^2(|z_2|^2-|z_3|^2-B^2+2|z_3|^2)\\
&=(|z_2|^2+|z_3|^2)(|z_2|^2+|z_3|^2-B^2)\,,
\end{align*}
whence
\[h_{H_f}(v,v)=c^3|z_2|^2|z_3|^2\frac{|z_2|^2+|z_3|^2}{(|z_2|^2+|z_3|^2-B^2)^6}=c^3|z_2|^2|z_3|^2\frac{t^2}{(t^2-B^2)^6}\,,\]
where $t=t_+:=\sqrt{|z_2|^2+|z_3|^2}$. Moreover,
\begin{align*}
&c^{-2}|z_2|^{-2}|z_3|^{-2}(|z_2|^2+|z_3|^2-B^2)^6\det H_f\\
&=|z_2|^{-2}|z_3|^{-2}\det\left(c^{-1}(|z_2|^2+|z_3|^2-B^2)^3H_f\right)\\
&=|z_2|^{-2}|z_3|^{-2}\det\begin{pmatrix}
|z_2|^2(-|z_2|^2+|z_3|^2-B^2)&-2|z_2|^2|z_3|^2\\
-2|z_2|^2|z_3|^2&|z_3|^2(|z_2|^2-|z_3|^2-B^2)
\end{pmatrix}\\
&=(-|z_2|^2+|z_3|^2-B^2)(|z_2|^2-|z_3|^2-B^2)-4|z_2|^2|z_3|^2\\
&=B^4-|z_2|^4-|z_3|^4-2|z_2|^2|z_3|^2\\
&=B^4-(|z_2|^2+|z_3|^2)^2\\
&=-(|z_2|^2+|z_3|^2+B^2)(|z_2|^2+|z_3|^2-B^2)\,,
\end{align*}
whence
\[\det H_f=-c^2|z_2|^2|z_3|^2\frac{|z_2|^2+|z_3|^2+B^2}{(|z_2|^2+|z_3|^2-B^2)^5}=-c^2|z_2|^2|z_3|^2\frac{t^2+B^2}{(t^2-B^2)^5}\,.\]
We conclude that
\begin{align*}
c^{-2}|z_2|^{-2}|z_3|^{-2}t^{-2}(t^2-B^2)^6\left[h_{H_f}(v,v)+\det H_f\right]&=c-t^{-2}(t^2+B^2)(t^2-B^2)\\
&=c-t^{-2}(t^4-B^4)\\
&=c-t^2+\frac{B^4}{t^2}\,,
\end{align*}
is non-negative when $c\geq t^2=|z_2|^2+|z_3|^2$. Therefore, choosing $c=c_+> A_+^2$ guarantees not only $(\eta_+)''_{22}\geq0$, but also $\det H_{\eta_+}\geq0$ in a neighborhood of $(\eta_+^\circ)^{-1}([0,1])$, whence the convexity of $\eta_+$ in a neighborhood of $\eta_+^{-1}([0,1])$. For future reference, we now compute the quantity $\cP(\eta_+)=\det H_{\eta_+}+\eta^\circ h_{H_{\eta_+}}(u,u)-((\eta_+)'_2-(\eta_+)'_3)^2$, defined in Proposition~\ref{prop:decomposition}. Here, $u:=(1,-1)^t$. By Proposition~\ref{prop:logarithmic}, $\cP(\eta_+)=(\eta_+^\circ)^2[h_{H_f}(v,v)+\det H_f+h_{H_f}(u,u)]$ wherever $\eta_+^\circ$ does not vanish. We compute
\begin{align*}
&c^{-1}(|z_2|^2+|z_3|^2-B^2)^3h_{H_f}(u,u)\\
&=u^t\left(c^{-1}(|z_2|^2+|z_3|^2-B^2)^3H_f\right)\bar u\\
&=(1,-1)\begin{pmatrix}
|z_2|^2(-|z_2|^2+|z_3|^2-B^2)+2|z_2|^2|z_3|^2\\
-2|z_2|^2|z_3|^2-|z_3|^2(|z_2|^2-|z_3|^2-B^2)
\end{pmatrix}\\
&=|z_2|^2(-|z_2|^2+|z_3|^2-B^2)+|z_3|^2(|z_2|^2-|z_3|^2-B^2)+4|z_2|^2|z_3|^2\\
&=-|z_2|^4-|z_3|^4+6|z_2|^2|z_3|^2-(|z_2|^2+|z_3|^2)B^2\\
&=8|z_2|^2|z_3|^2-(|z_2|^2+|z_3|^2)(|z_2|^2+|z_3|^2+B^2)\,,
\end{align*}
whence
\[h_{H_f}(u,u)=c\,\frac{8|z_2|^2|z_3|^2-(|z_2|^2+|z_3|^2)(|z_2|^2+|z_3|^2+B^2)}{(|z_2|^2+|z_3|^2-B^2)^3}=c\,\frac{8|z_2|^2|z_3|^2-t^2(t^2+B^2)}{(t^2-B^2)^3}\,.\]
We conclude that
\begin{align*}
&c^{-1}|z_2|^{-2}|z_3|^{-2}t^{-2}(t^2-B^2)^6\left[h_{H_f}(v,v)+\det H_f +h_{H_f}(u,u)\right]\\
&=c^2+c\left(-t^2+\frac{B^4}{t^2}\right)+[8t^{-2}-|z_2|^{-2}|z_3|^{-2}(t^2+B^2)](t^2-B^2)^3\\
&=c^2+c\left(-t^2+\frac{B^4}{t^2}\right)-|z_2|^{-2}|z_3|^{-2}(t^4-B^4)(t^2-B^2)^2+8t^{-2}(t^2-B^2)^3\\
&\geq c^2+c\left(-t^2+\frac{B^4}{t^2}\right)-|z_2|^{-2}|z_3|^{-2}(t^4-B^4)(t^2-B^2)^2\,,
\end{align*}
where the last expression is non-negative for
\begin{align*}
c=c_+&\geq\left(t_+^2-\frac{B_+^4}{t_+^2}\right)\left(\frac12+\frac12\sqrt{1+4|z_2|^{-2}|z_3|^{-2}t_+^4(t_+^2-B_+^2)(t_+^2+B_+^2)^{-1}}\right)\,.
\end{align*}

Similarly, we can prove that choosing $c_-> A_-^2$ guarantees the convexity of $\eta_-$ in a neighborhood of $\eta_-^{-1}([0,1])$. Indeed, omitting the sign $-$ in $f_-,c_-,A_-,B_-$ for the sake of readability, we have
\begin{align*}
f'_j&=\frac{-c|z_j|^{-2}}{(|z_2|^{-2}+|z_3|^{-2}-B^2)^2}\\
f''_{22}&=c|z_2|^{-2}\frac{-|z_2|^{-2}+|z_3|^{-2}-B^2}{(|z_2|^{-2}+|z_3|^{-2}-B^2)^3}\\
f''_{33}&=c|z_3|^{-2}\frac{|z_2|^{-2}-|z_3|^{-2}-B^2}{(|z_2|^{-2}+|z_3|^{-2}-B^2)^3}\\
f''_{23}&=c\frac{-2|z_2|^{-2}|z_3|^{-2}}{(|z_2|^{-2}+|z_3|^{-2}-B^2)^3}\,.
\end{align*}
The inequality $(\eta_-)''_{22}\geq0$ holds where $\eta_-^\circ$ does not vanish if, and only if, $(f'_2)^2+f''_{22}\geq0$, where
\begin{align*}
&c^{-1}|z_2|^4(|z_2|^{-2}+|z_3|^{-2}-B^2)^4\left[(f'_2)^2+f''_{22}\right]\\
&=c+|z_2|^2(-|z_2|^{-2}+|z_3|^{-2}-B^2)(|z_2|^{-2}+|z_3|^{-2}-B^2)\\
&=c-|z_2|^{-2}+|z_2|^2(|z_3|^{-2}-B^2)^2\,.
\end{align*}
The last inequality is certainly true when $c\geq|z_2|^{-2}$. On the other hand, $\det H_{\eta_-}\geq0$ where $\eta_-^\circ$ does not vanish if, and only if, (after setting $t=t_-:=\sqrt{|z_2|^{-2}+|z_3|^{-2}}$)
\[0\leq c^{-2}|z_2|^2|z_3|^2t^{-2}(t^2-B^2)^6\left[h_{H_f}(v,v)+\det H_f\right]= c-t^2+\frac{B^4}{t^2}\,.\]
The last inequality is certainly true when $c\geq t^2=|z_2|^{-2}+|z_3|^{-2}$. A suitable choice is therefore $c=c_-> A_-^2$, as announced. For future reference, we now compute the quantity $\cP(\eta_-)=\det H_{\eta_-}+\eta^\circ h_{H_{\eta_-}}(u,u)-((\eta_-)'_2-(\eta_-)'_3)^2=(\eta_-^\circ)^2[h_{H_f}(v,v)+\det H_f+h_{H_f}(u,u)]$. We have
\begin{align*}
&c^{-1}|z_2|^2|z_3|^2t^{-2}(t^2-B^2)^6\left[h_{H_f}(v,v)+\det H_f +h_{H_f}(u,u)\right]\\
&=c^2+c\left(-t^2+\frac{B^4}{t^2}\right)-|z_2|^2|z_3|^2(t^4-B^4)(t^2-B^2)^2+8t^{-2}(t^2-B^2)^3\\
&\geq c^2+c\left(-t^2+\frac{B^4}{t^2}\right)-|z_2|^2|z_3|^2(t^4-B^4)(t^2-B^2)^2\,,
\end{align*}
where the last expression is non-negative for
\begin{align*}
c=c_-&\geq\left(t_-^2-\frac{B_-^4}{t_-^2}\right)\left(\frac12+\frac12\sqrt{1+4|z_2|^2|z_3|^2t_-^4(t_-^2-B_-^2)(t_-^2+B_-^2)^{-1}}\right)\,.
\end{align*}

Consider now the compact set
\begin{align*}
K&:=(\eta^\circ)^{-1}([0,1])\subseteq(\eta_+^\circ)^{-1}([0,1])\cap(\eta_-^\circ)^{-1}([0,1])\\
&=\{(z_2,z_3)\in\bbC^2:t_+\leq A_+^2,t_-\leq A_-^2\}\,.
\end{align*}
For future reference, we point out that $(z_2,z_3)\in K$ implies $A_-^{-1}\leq|z_2|,|z_3|\leq A_+$ and that $\frac{\sqrt{2}}{B_-}\leq|z_2|,|z_3|\leq\frac{B_+}{\sqrt{2}}$ implies $\eta^\circ(z)=0$. We established that $\eta_+,\eta_-$ are both convex in a neighborhood of $K$ when $c_+>A_+^2$ and $c_->A_-^2$. Proposition~\ref{prop:decomposition} guarantees that, when $c_+>A_+^2$ and $c_->A_-^2$, then inequality~\eqref{eq:pseudoconvex1} is fulfilled in a neighborhood of $K$ for $\eta=\eta_++\eta_-$.  Our final aim is studying inequality~\eqref{eq:pseudoconvex3} for $\eta=\eta_++\eta_-$. According to Proposition~\ref{prop:decomposition}, inequality~\eqref{eq:pseudoconvex3} is the same as
\begin{align*}
0&\leq\cP(\eta)=\cP(\eta_+)+\cP(\eta_-)+\eta_+^\circ \det H_{\eta_-}+\eta_-^\circ \det H_{\eta_+}\\
&\quad+(\eta_+)''_{22}(\eta_-)''_{33}+(\eta_-)''_{22}(\eta_+)''_{33}-2(\eta_+)''_{23}(\eta_-)''_{23}-2((\eta_+)'_2-(\eta_+)'_3)((\eta_-)'_2-(\eta_-)'_3)\,.
\end{align*}
We already established that $\det H_{\eta_+}\geq0$ in a neighborhood of $(\eta_+^\circ)^{-1}([0,1])$ when $c_+>A_+^2$ and that $\det H_{\eta_-}\geq0$ in a neighborhood of $(\eta_-^\circ)^{-1}([0,1])$ when $c_->A_-^2$. We also know that $\cP(\eta_+),\cP(\eta_-)\geq0$ in a neighborhood of $K$ if we choose $c_+>\cM_+$ and $c_->\cM_-$, where
\begin{align*}
\cM_+&:=\max_{(z_2,z_3)\in K}\left(t_+^2-\frac{B_+^4}{t_+^2}\right)\left(\frac12+\frac12\sqrt{1+4|z_2|^{-2}|z_3|^{-2}t_+^4(t_+^2-B_+^2)(t_+^2+B_+^2)^{-1}}\right)\,,\\
\cM_-&:=\max_{(z_2,z_3)\in K}\left(t_-^2-\frac{B_-^4}{t_-^2}\right)\left(\frac12+\frac12\sqrt{1+4|z_2|^2|z_3|^2t_-^4(t_-^2-B_-^2)(t_-^2+B_-^2)^{-1}}\right)\,.
\end{align*}
We are left with studying the quantity
\[\cQ(\eta):=(\eta_+)''_{22}(\eta_-)''_{33}+(\eta_-)''_{22}(\eta_+)''_{33}-2(\eta_+)''_{23}(\eta_-)''_{23}-2\left[(\eta_+)'_2-(\eta_+)'_3\right]\left[(\eta_-)'_2-(\eta_-)'_3\right]\,.\]
We have $\cQ(\eta)=0$ wherever $\eta_+^\circ=0$ or $\eta_-^\circ=0$. Moreover,
\begin{align*}
&(\eta^\circ)^{-2}\cQ(\eta)\\
&=\left[((f_+)'_2)^2+(f_+)''_{22}\right]\left[((f_-)'_3)^2+(f_-)''_{33}\right]+\left[((f_-)'_2)^2+(f_-)''_{22}\right]\left[((f_+)'_3)^2+(f_+)''_{33}\right]\\
&\quad-2\left[(f_+)'_2(f_+)'_3+(f_+)''_{23}\right]\left[(f_-)'_2(f_-)'_3+(f_-)''_{23}\right]-2\left[(f_+)'_2-(f_+)'_3\right]\left[(f_-)'_2-(f_-)'_3\right]
\end{align*}
at each point $(z_2,z_3)$ where neither $\eta_+^\circ$ nor $\eta_-^\circ$ vanishes (which implies $|z_2|\neq|z_3|$), i.e., at each point $(z_2,z_3)$ in the disjoint union $V_2\cup V_3$ with
\begin{align*}
V_2&:\left\{\begin{array}{l}
t_+=|z_2|^2+|z_3|^2>B_+\\
t_-=|z_2|^{-2}+|z_3|^{-2}>B_-\\
|z_3|<|z_2|\\
\end{array}\right.\,,&
V_3&:\left\{\begin{array}{l}
t_+=|z_2|^2+|z_3|^2>B_+\\
t_-=|z_2|^{-2}+|z_3|^{-2}>B_-\\
|z_2|<|z_3|\\
\end{array}\right.\,.
\end{align*}
We already know that
\begin{align*}
c_+^{-1}(t_+^2-B_+^2)^2(f_+)'_2&=|z_2|^2\,,\\
c_-^{-1}(t_-^2-B_-^2)^2(f_-)'_2&=-|z_2|^{-2}\,,\\
c_+^{-1}(t_+^2-B_+^2)^4\left[((f_+)'_2)^2+(f_+)''_{22}\right]
&\geq|z_2|^4(c_+-|z_2|^2)\,,\\
c_-^{-1}(t_-^2-B_-^2)^4\left[((f_-)'_2)^2+(f_-)''_{22}\right]
&\geq|z_2|^{-4}(c_--|z_2|^{-2})\,.
\end{align*}
Similarly,
\begin{align*}
c_+^{-1}(t_+^2-B_+^2)^2(f_+)'_3&=|z_3|^2\,,\\
c_-^{-1}(t_-^2-B_-^2)^2(f_-)'_3&=-|z_3|^{-2}\,,\\
c_+^{-1}(t_+^2-B_+^2)^4\left[((f_+)'_3)^2+(f_+)''_{33}\right]
&\geq|z_3|^4(c_+-|z_3|^2)\,,\\
c_-^{-1}(t_-^2-B_-^2)^4\left[((f_-)'_3)^2+(f_-)''_{33}\right]
&\geq|z_3|^{-4}(c_--|z_3|^{-2})\,.\\
c_+^{-1}(t_+^2-B_+^2)^4\left[(f_+)'_2(f_+)'_3+(f_+)''_{23}\right]&=|z_2|^2|z_3|^2\left[c_+-2(t_+^2-B_+^2)\right]\,,\\
c_-^{-1}(t_-^2-B_-^2)^4\left[(f_-)'_2(f_-)'_3+(f_-)''_{23}\right]&=|z_2|^{-2}|z_3|^{-2}\left[c_--2(t_-^2-B_-^2)\right]\,.
\end{align*}
Let us set $x:=|z_2|^2|z_3|^{-2}$. If $(z_2,z_3)\in V_2$, then $|z_2|^2>\frac{B_+^2}{2}$ and $|z_3|^{-2}>\frac{B_-^2}{2}$, whence $x>\frac{B_+^2B_-^2}{4}>1$; if $(z_2,z_3)\in V_3$, then $|z_3|^2>\frac{B_+^2}{2}$ and $|z_2|^{-2}>\frac{B_-^2}{2}$, whence $x<\frac{4}{B_+^2B_-^2}<1$. We also know that $(z_2,z_3)\in K$ implies $\frac1{A_+^2A_-^2}\leq x\leq A_+^2A_-^2$. For $(z_2,z_3)\in V_2\cup V_3$, we have
\begin{align*}
&c_+^{-1}c_-^{-1}(t_+^2-B_+^2)^4(t_-^2-B_-^2)^4(\eta^\circ)^{-2}\cQ(\eta)\\
&\geq|z_2|^4|z_3|^{-4}(c_+-|z_2|^2)(c_--|z_3|^{-2})+|z_2|^{-4}|z_3|^4(c_--|z_2|^{-2})(c_+-|z_3|^2)\\
&\quad-2\left[c_+-2(t_+^2-B_+^2)\right]\left[c_--2(t_-^2-B_-^2)\right]\\
&\quad-2(t_+^2-B_+^2)^2(t_-^2-B_-^2)^2\left(|z_2|^2-|z_3|^2\right)\left(-|z_2|^{-2}+|z_3|^{-2}\right)\\
&=c_+c_-(x^2+x^{-2}-2)+c_+\left[-x^2|z_3|^{-2}-x^{-2}|z_2|^{-2}+4(t_-^2-B_-^2)\right]+\\
&\quad+c_-\left[-x^2|z_2|^2-x^{-2}|z_3|^2+4(t_+^2-B_+^2)\right]+x^3+x^{-3}-8(t_+^2-B_+^2)(t_-^2-B_-^2)\\
&\quad-2(t_+^2-B_+^2)(t_-^2-B_-^2)(x+x^{-1}-2)\\
&\geq\alpha_{11}c_+c_-+\alpha_{10}c_++\alpha_{01}c_-+\alpha_{00}\,,
\end{align*}
with 
\begin{align*}
\alpha_{11}&:=(x-x^{-1})^2>0\\
\alpha_{10}&:=-x^2|z_3|^{-2}-x^{-2}|z_2|^{-2}\\
\alpha_{01}&:=-x^2|z_2|^2-x^{-2}|z_3|^2\\
\alpha_{00}&:=-2(t_+^2-B_+^2)(t_-^2-B_-^2)\left[(x^{1/2}-x^{-1/2})^2+4(t_+^2-B_+^2)(t_-^2-B_-^2)\right]\,.
\end{align*}
\begin{figure}[t]
\begin{center}
\includegraphics[height=7cm]{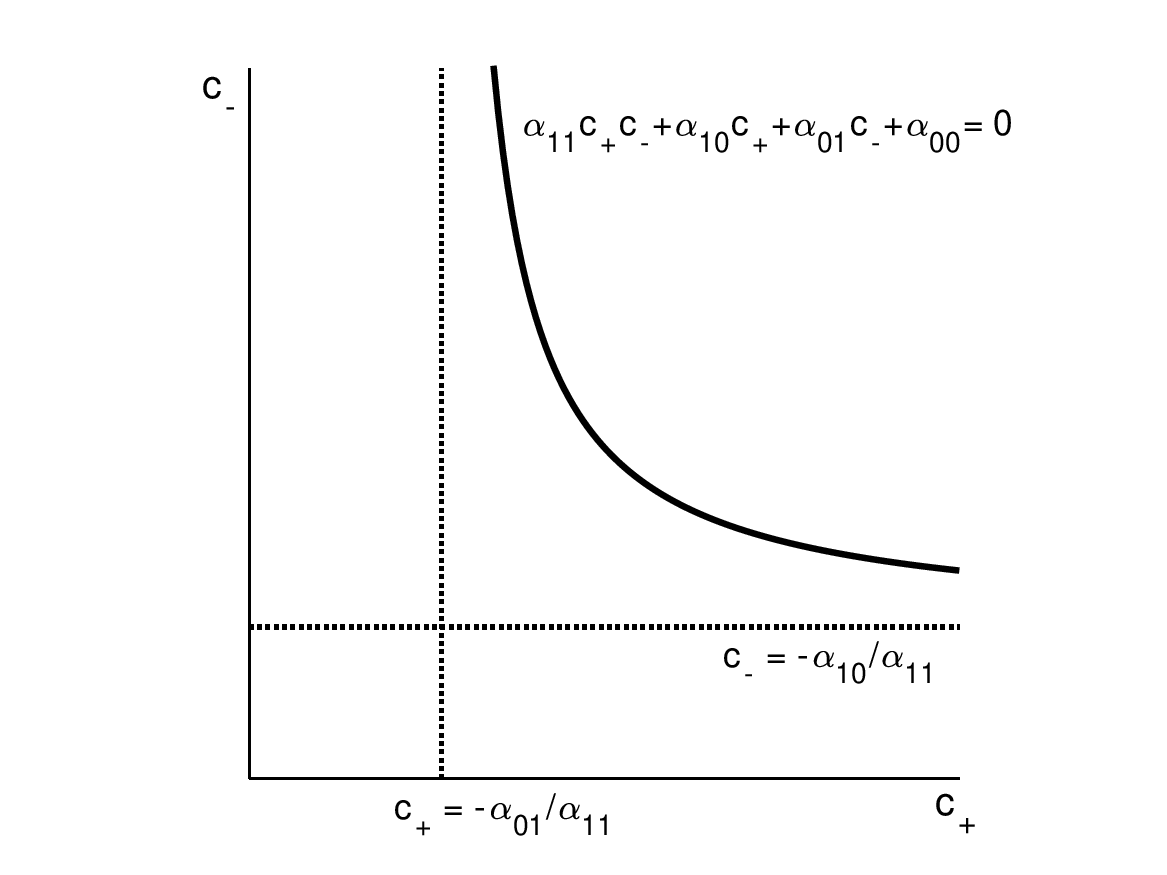}
\end{center}
\caption{The right branch of the rectangular hyperbola $\alpha_{11}c_+c_-+\alpha_{10}c_++\alpha_{01}c_-+\alpha_{00}=0$.}
\label{fig2}
\end{figure}
Therefore, $\cQ(\eta)\geq0$ in $V_2\cup V_3$ if $(c_+,c_-)$ lies to the right of the right branch of the rectangular hyperbola $\alpha_{11}c_+c_-+\alpha_{10}c_++\alpha_{01}c_-+\alpha_{00}=0$ depicted in Figure~\ref{fig2}. This happens exactly when the two following inequalities are fulfilled:
\begin{align*}
c_+&\geq-\frac{\alpha_{01}}{\alpha_{11}}=\frac{x^2|z_2|^2+x^{-2}|z_3|^2}{(x-x^{-1})^2}=\frac{x^4|z_2|^2+|z_3|^2}{(x^2-1)^2}\,,\\
c_-&\geq\frac{-\alpha_{10}c_+-\alpha_{00}}{\alpha_{11}c_++\alpha_{01}}\,.
\end{align*}
We can set
\[\cN_+:=\max_{(z_2,z_2)\in K}\left(\frac{x^4|z_2|^2+|z_3|^2}{(x^2-1)^2}\right)\,,\]
pick a $c_+$ larger than $A_+^2,\cM_+$ and $\cN_+$, set
\[\cN_-=\cN_-(c_+):=\max_{(z_2,z_2)\in K}\frac{-\alpha_{10}c_+-\alpha_{00}}{\alpha_{11}c_++\alpha_{01}}\,,\]
and pick a $c_-$ larger than $A_-^2,\cM_-$ and $\cN_-$ to conclude that $\cW_\eta$ is a smoothly bounded pseudoconvex domain.
\end{proof}

By inspection in the last proof, the following property could be added to the statement: there exists $C_-\geq A_-^2$ such that, for every $c_->C_-$, there exist $C_+=C_+(c_-)\geq A_+^2$ such that, for every $c_+>C_+$, the domain $\cW_\eta$ is pseudoconvex.\\\\


\section{Study of the smoothly bounded pseudoconvex worm domains in $\mathscr{C}_\mu$}\label{sec:study}

This section is devoted to the study of the class $\mathscr{C}_\mu$ of smoothly bounded pseudoconvex domains constructed in Definition~\ref{def:classofsmoothlyboundedworms} and proven nonempty in Theorem~\ref{thm:main}. We make the following remark, which involves the bounded non-smooth domain $\cW'_\mu$ and the unbounded domain $\cW'_\infty$ defined in Definition~\ref{def:truncatedworm}.

\begin{remark} {\em
Let $\Omega\in\mathscr{C}_\mu$. Then
\begin{equation}\label{eq:worminclusions}
\cW'_\mu\subset\Omega\subset\cW'_{\mu'}\subset\cW'_\infty
\end{equation}
for sufficiently large $\mu'>\mu$. Namely, if $\Omega=\cW_\eta$, it
suffices for the open disk of radius $\mu'$ centered at the origin in
$\bbR^2$ to include the set $\eta^{-1}([0,1))$. }
\end{remark}

It is useful to define the following functions on $\cW'_\infty$.

\begin{definition}\label{def:logarithm} {\em
For all $z=(z_1,z_2,z_3)\in\cW'_\infty$ and all $\kappa\in\bbC$, we define
\begin{align*}
\ell(z)&:=\log\left(z_1e^{-i \log |z_2 z_3|^2}\right)+i \log |z_2 z_3|^2\,,\\
E_\kappa(z)&:=\exp\left(\kappa \ell(z)\right)\,,
\end{align*}
where $\log$ denotes the principal branch of logarithm on $\bbC\setminus(-\infty,0]$.
}
\end{definition}

We can make the following remark and prove the subsequent proposition concerning the Bergman space $A^2(\Omega)$ of any $\Omega\in\mathscr{C}_\mu$.

\begin{remark} {\em
Let $z\in\cW'_\infty$. If $\kappa\in\bbZ$, then $E_\kappa(z)=z_1^\kappa$. If $\kappa\in\bbR$, then $|E_\kappa(z)|=|z_1|^\kappa$. If $\kappa\in\bbC$ and $z\in\cW'_{\frac\pi2}$, then $E_\kappa(z)=z_1^\kappa:=\exp(\kappa\log(z_1))$.
}
\end{remark}

\begin{proposition}\label{logarithmandpowers}
The functions $\ell,E_\kappa:\cW'_\infty\to\bbC$ are holomorphic and locally constant in $z_2$ and in $z_3$. Moreover, $\frac{\partial\ell}{\partial z_1}(z)=\frac1{z_1}$ and $\frac{\partial E_\kappa}{\partial z_1}=\kappa\,E_{\kappa-1}$. Finally, the following are equivalent:
\begin{enumerate}[(i)]
\item $\Re(\kappa)>-1$;
\item $E_\kappa(z)$ belongs to $A^2(\cW'_\mu)$ for some $\mu>0$;
\item $E_\kappa(z)z_2^jz_3^k$ belongs to $A^2(\cW'_\mu)$ for all $\mu>0$ and all $j,k\in\bbZ$;
\item $E_\kappa(z)z_2^jz_3^k$ belongs to $A^2(\Omega)$ for all $\mu>0$, for all $\Omega\in\mathscr{C}_\mu$ and for all $j,k\in\bbZ$.
\end{enumerate}
\end{proposition}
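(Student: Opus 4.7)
My plan is to work locally: I will show that near any base point $(z_1^0,z_2^0,z_3^0) \in \cW'_\infty$ the function $\ell(z)$ coincides with a fixed branch of $\log z_1$ depending only on the base point, and therefore depends only on $z_1$ in a neighborhood. This immediately yields holomorphicity, local constancy in $(z_2,z_3)$, and the identity $\partial\ell/\partial z_1 = 1/z_1$.

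Writing $\theta := \log|z_2z_3|^2$, I would first note that $(z_1,z_2,z_3) \in \cW'_\infty$ forces $z_1e^{-i\theta} \in \bigtriangleup(1,1)$, a disk tangent to $0$ and contained in the right half-plane, so that the principal logarithm is holomorphic on it with imaginary part in $(-\pi/2,\pi/2)$. In particular,
\[
\ell(z) = \log|z_1| + i\bigl(\arg(z_1e^{-i\theta}) + \theta\bigr),
\]
where $\arg(z_1e^{-i\theta})\in(-\pi/2,\pi/2)$. Fixing $\theta_0 := \log|z_2^0z_3^0|^2$, the integer $k \in \bbZ$ such that $\arg(z_1) - \theta + 2\pi k \in (-\pi/2,\pi/2)$ is constant in a small enough neighborhood of the base point by continuity, so $\arg(z_1e^{-i\theta}) + \theta$ agrees with a fixed continuous branch of $\arg z_1$. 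Calling $\log_{\theta_0}$ the corresponding branch of $\log z_1$, I obtain $\ell(z) = \log_{\theta_0}(z_1)$ throughout that neighborhood. This gives the holomorphicity of $\ell$, the derivative formula $\partial\ell/\partial z_1=1/z_1$, and the local constancy in $(z_2,z_3)$. The statements about $E_\kappa=\exp(\kappa\ell)$ follow by composition, using that $\exp(\ell(z)) = z_1$ locally to identify $E_{\kappa-1} = E_\kappa/z_1$ and hence $\partial E_\kappa/\partial z_1 = \kappa E_\kappa/z_1 = \kappa E_{\kappa-1}$.

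For the equivalences I would argue in the cycle (i)$\Rightarrow$(iii)$\Rightarrow$(iv)$\Rightarrow$(ii)$\Rightarrow$(i). The local formula gives
\[
|E_\kappa(z)|^2 = |z_1|^{2\Re\kappa}\,\exp\bigl(-2\Im\kappa\cdot(\arg(z_1e^{-i\theta})+\theta)\bigr),
\]
and on each $\cW'_\mu$ the exponential factor is bounded above and below because $\theta\in(-2\mu,2\mu)$ and $\arg(z_1e^{-i\theta})\in(-\pi/2,\pi/2)$. Similarly, on $\cW'_\mu$ the variables $z_2,z_3$ are confined to the product of annuli $A(0,e^{-\mu/2},e^{\mu/2})\times A(0,e^{-\mu/2},e^{\mu/2})$, on which $|z_2|^{2j}|z_3|^{2k}$ is bounded above and bounded away from $0$ for every $j,k\in\bbZ$. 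Fubini therefore reduces integrability questions on $\cW'_\mu$ to the finiteness of
\[
\int_{\bigtriangleup(e^{i\theta},1)} |z_1|^{2\Re\kappa}\,dA(z_1).
\]
Switching to polar coordinates centered at $0$, this disk touches the origin tangentially, and the integral behaves like $\int_0^\varepsilon r^{2\Re\kappa+1}\,dr$, which is finite if and only if $\Re\kappa > -1$. This settles (ii)$\Leftrightarrow$(i) and (i)$\Rightarrow$(iii). The implications (iii)$\Rightarrow$(iv) and (iv)$\Rightarrow$(ii) are immediate from the inclusions $\cW'_\mu \subset \Omega \subset \cW'_{\mu'}$ in~\eqref{eq:worminclusions}, with the class $\mathscr{C}_\mu$ being nonempty by Theorem~\ref{thm:main}, using that a smaller domain gives a larger Bergman space via restriction.

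The main technical point is the local branch argument for $\ell$: one has to verify that the principal branch of $\log(z_1e^{-i\theta})$, defined relative to a half-plane that moves with $(z_2,z_3)$, can be patched into a fixed branch of $\log z_1$ depending only on the base point. Once this is established the remaining estimates are routine: bounded factors from $\Im\kappa$ and from the annular factors in $z_2,z_3$ do not affect integrability, and the whole problem collapses to a single planar integral near a tangential boundary point of a disk.
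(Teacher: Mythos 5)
Your proposal is correct, and for the integrability equivalences it is in substance the same argument as the paper's: the paper performs the explicit change of variables and polar-coordinate computation of $\|E_{a+ib}z_2^jz_3^k\|_{A^2(\cW'_\mu)}^2$, finding finiteness exactly when $a>-1$, while you bound the harmless factors (the $\Im\kappa$-exponential, which is controlled since $\Im\ell$ is bounded on $\cW'_\mu$, and the annular factors $|z_2|^{2j}|z_3|^{2k}$) above and below and reduce by Fubini to the single fiber integral $\int_{\bigtriangleup(e^{i\theta},1)}|z_1|^{2\Re\kappa}\,dA$, which is the same tangential-disk computation. Your handling of (iii)$\Leftrightarrow$(iv) via the inclusions~\eqref{eq:worminclusions}, with the needed nonemptiness of $\mathscr{C}_\mu$ from Theorem~\ref{thm:main}, matches the paper as well.

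The one genuinely different ingredient is the first part: the paper proves holomorphicity and local constancy in $(z_2,z_3)$ by directly computing the Wirtinger derivatives $\partial\ell/\partial\bar z_j$ and $\partial\ell/\partial z_j$ for $j=2,3$ and checking they vanish, whereas you identify $\ell$ locally with a fixed branch $\log_{\theta_0}$ of $\log z_1$, using that $z_1e^{-i\theta}\in\bigtriangleup(1,1)$ forces the principal argument into $(-\pi/2,\pi/2)$ and that the resulting integer ambiguity is locally constant by continuity and connectedness. Your route is slightly more conceptual (it makes transparent why $\ell$ depends only on $z_1$ locally and immediately gives $e^{\ell}=z_1$, hence $E_{\kappa-1}=E_\kappa/z_1$ and the derivative formula), at the cost of the small branch-patching verification, which you carry out correctly; the paper's computation is more mechanical but needs no case analysis. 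Either way the statement is fully proved.
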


\begin{proof}
The function $\ell$ (whence all functions $E_\kappa$ with $\kappa\in\bbC$) is holomorphic because
\begin{align*}
\frac{\partial\ell}{\partial\bar z_1}(z)&=\frac{0}{z_1e^{-i \log |z_2 z_3|^2}}+0\equiv0\,,\\
\frac{\partial\ell}{\partial\bar z_2}(z)&=\frac{z_1e^{-i \log |z_2 z_3|^2}(-i)}{z_1e^{-i \log |z_2 z_3|^2}}\frac{\partial\log |z_2 z_3|^2}{\partial\bar z_2}+i \frac{\partial\log |z_2 z_3|^2}{\partial\bar z_2}\equiv0\,,\\
\frac{\partial\ell}{\partial\bar z_3}(z)&=\frac{z_1e^{-i \log |z_2 z_3|^2}(-i)}{z_1e^{-i \log |z_2 z_3|^2}}\frac{\partial\log |z_2 z_3|^2}{\partial\bar z_3}+i \frac{\partial\log |z_2 z_3|^2}{\partial\bar z_3}\equiv0\,.
\end{align*}
Analogous computations prove that $\frac{\partial\ell}{\partial z_2}\equiv0\equiv\frac{\partial\ell}{\partial z_3}$, whence $\ell$ and $E_\kappa$ with $\kappa\in\bbC$ are locally constant in $z_2$ and in $z_3$. Moreover,
\[\frac{\partial\ell}{\partial z_1}(z)=\frac{e^{-i \log |z_2 z_3|^2}}{z_1e^{-i \log |z_2 z_3|^2}}+0=\frac1{z_1}\,.\]
It easily follows that $E_\kappa=e^{\kappa\ell}$ has $\frac{\partial E_\kappa}{\partial z_1}=\kappa\,E_{\kappa-1}$.

Now let us study $L^2$-integrability. For any $a,b\in\bbR,j,k\in\bbZ,\mu>0$, we remark that
\begin{align*}
&\|E_{a+ib}z_2^jz_3^k\|_{A^2(\cW'_\mu)}^2 
=\int_{\cW'_\mu}\left\vert E_{a+ib}(z) z_2^jz_3^k\right\vert^2\, dV(z)\\ 
&= \int_{-\mu< \log|z_2|^2<\mu}\int_{-\mu< \log|z_3|^2<\mu}\int_{\bigtriangleup(e^{i\log|z_2z_3|^2},1)} |z_1|^{2a} |z_2|^{2j} |z_3|^{2k}\\
&\quad\cdot\exp \big\{-2b[\arg(z_1e^{-i\log|z_2z_3|^2}) + \log|z_2z_3|^2]\big\} \, dV(z_1)dV(z_2)dV(z_3)  \\ 
&= \int_{-\mu< \log|z_2|^2<\mu}\int_{-\mu< \log|z_3|^2<\mu}\int_{\bigtriangleup(1,1)} |\zeta|^{2a}|z_2|^{2j}|z_3|^{2k}\\
&\quad\cdot\exp\big\{-2b(\arg(\zeta) + \log|z_2z_3|^2)\big\}\, dV(\zeta)\,dV(z_2)\,dV(z_3)\\ 
&= 4\pi^2 \int_{e^{-\mu/2}}^{e^{\mu/2}}\int_{e^{-\mu/2}}^{e^{\mu/2}}\int_{-\frac \pi 2}^{\frac \pi 2}\int_0^{2\cos\theta} r^{2a+1} \rho_2^{2j+1} \rho_3^{2k+1} e^{-2b(\theta+ \log (\rho_2\rho_3)^2)}\, dr\, d\theta\,{d\rho_2}\,{d\rho_3}\\ 
&= \pi^2 \int_{-\frac \pi 2}^{\frac \pi 2}\int_{\frac{\theta}2-\mu}^{\frac{\theta}2+\mu}\int_{\frac{\theta}2-\mu}^{\frac{\theta}2+\mu}\int_{-\infty}^{\log(2\cos\theta)} e^{2(a+1)s} e^{(x_2-\frac{\theta}2)(j+1)} e^{(x_3-\frac{\theta}2)(k+1)} e^{-2b(x_2+x_3)}\, ds\,dx_2\,dx_3\,d\theta \\ 
&= \pi^2 \int_{-\frac \pi 2}^{\frac \pi 2}\int_{\frac{\theta}2-\mu}^{\frac{\theta}2+\mu}\int_{\frac{\theta}2-\mu}^{\frac{\theta}2+\mu}\int_{-\infty}^{\log(2\cos\theta)}e^{2(a+1)s} ds\,
e^{x_2(j+1-2b)}\,dx_2 e^{x_3(k+1-2b)}\,dx_3\, e^{-\frac{\theta}2(j+k+2)}\,d\theta
\end{align*}
is finite if, and only if, $a>-1$. For the fifth equality, we applied the change of coordinates $(r,\rho_2,\rho_3)=\big(e^s,e^{\frac{x_2}2-\frac{\theta}4},e^{\frac{x_3}2-\frac{\theta}4}\big)$. It follows at once that properties (i),(ii),(iii) are mutually equivalent. But the chain of inclusions~\eqref{eq:worminclusions} guarantees that (iii) and (iv) are mutually equivalent. The proof is therefore complete.
\end{proof}

We are now ready to prove that, for sufficiently large $\mu$ and for $\Omega\in\mathscr{C}_\mu$, the Nebenh\"ulle of $\Omega$ is nontrivial. We actually prove a bit more about the space $A^2(\overline{\Omega})$, defined as the closure within the Bergman space $A^2(\Omega)$ of the subspace of those elements of $A^2(\Omega)$ that extend to holomorphic functions on neighborhoods of $\overline{\Omega}$.

\begin{theorem}\label{non-density}	  
Let $\mu>4\pi$. If $f\in A^2(\overline{\cW_\mu'})$, then $f$ admits a holomorphic extension to the domain
\begin{equation}\label{w-hat}
\widehat{\cW}_\mu':=\cW_\mu'\cup\bigcup_{-\frac\mu2<a<\frac\mu2-2\pi} \big\{(z_1,z_2,z_3):\,|z_1-e^{2ia}| <1,a<\log|z_j|^2<a+2\pi\textrm{\ for\ }j\in\{2,3\}\big\}.
\end{equation}
As a consequence, $A^2(\overline{\cW_\mu'})\subsetneq A^2(\cW_\mu')$. 

The same conclusions hold true with any $\Omega\in\mathscr{C}_\mu$ in place of $\cW_\mu'$.
\end{theorem}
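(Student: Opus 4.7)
The strategy is to extend $f$ by Hartogs' continuity principle, deforming a family of analytic disks whose boundaries stay pinned to $\overline{\cW_\mu'}$ into the new region $\widehat{\cW}_\mu'\setminus\cW_\mu'$. Since $A^2(\overline{\cW_\mu'})$ is by definition the $L^2$-closure of the subspace of functions $f_n$ that are holomorphic on some open neighborhood of $\overline{\cW_\mu'}$, it is enough to extend each such $f_n$ to a holomorphic function $\tilde f_n$ on $\widehat{\cW}_\mu'$ and to establish a bound $|\tilde f_n(P)|\leq C_P\|f_n\|_{L^2(\cW_\mu')}$ on compact subsets of $\widehat{\cW}_\mu'$. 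This bound will come from a Cauchy integral representation of $\tilde f_n(P)$ along the boundary of a slightly contracted version of the disk through $P$, whose image is a compact subset of $\cW_\mu'$, combined with the standard Bergman mean-value inequality. One then sets $\tilde f:=\lim\tilde f_n$, which is holomorphic on $\widehat{\cW}_\mu'$ and equals $f$ on $\cW_\mu'$ by unique analytic continuation.

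Fix $a\in(-\mu/2,\mu/2-2\pi)$ (nonempty by $\mu>4\pi$) and a target $P=(p_1,p_2,p_3)$ in the slab $S_a=\{|z_1-e^{2ia}|<1,\ a<\log|z_2|^2,\log|z_3|^2<a+2\pi\}$ indexing the $a$-th summand of $\widehat{\cW}_\mu'$. Set $\zeta^*:=p_1-e^{2ia}\in\Delta$ and $c:=e^{a+2\pi}/|p_3|$. The assumptions on $P$ give $|p_2p_3|\in(e^a,e^{a+2\pi})$, so the ratio $|p_2|/c=|p_2p_3|/e^{a+2\pi}$ lies strictly in $(e^{-2\pi},1)$, and we may choose a degree-$1$ Blaschke product $B$ with $cB(\zeta^*)=p_2$. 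Define the analytic disk
\[\Phi(\zeta):=\bigl(e^{2ia}+\zeta,\ cB(\zeta),\ p_3\bigr),\qquad\zeta\in\overline{\Delta},\]
so $\Phi(\zeta^*)=P$. On $|\zeta|=1$ the identity $|cB|\equiv c$ yields $\log|cB\cdot p_3|^2=2(a+2\pi)$, hence $e^{i\log|cB\cdot p_3|^2}=e^{2ia}$ and $|(e^{2ia}+\zeta)-e^{i\log|cB\cdot p_3|^2}|=|\zeta|=1$. Moreover, $\log|cB|^2=2a+4\pi-\log|p_3|^2\in(a+2\pi,a+4\pi)$ and $\log|p_3|^2\in(a,a+2\pi)$ both lie inside $(-\mu,\mu)$, the former precisely because $\mu>4\pi$ yields $a+4\pi<\mu/2+2\pi<\mu$. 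Therefore $\Phi(\partial\Delta)\subset\overline{\cW_\mu'}$.

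To activate the continuity principle, embed $\Phi=\Phi_1$ in a one-parameter family $\{\Phi_s\}_{s\in[0,1]}$ of analytic disks in which the Blaschke zero of $B$ is pulled radially out toward $\partial\Delta$: at $s=0$ the Blaschke factor degenerates into a unimodular constant and $\Phi_0(\overline{\Delta})$ becomes a constant-$(z_2,z_3)$ fiber-disk lying in $\overline{\cW_\mu'}$ with interior in $\cW_\mu'$, while the modulus identity $|cB_s|\equiv c$ on $\partial\Delta$ is preserved throughout the deformation, so $\Phi_s(\partial\Delta)\subset\overline{\cW_\mu'}$ for every $s$. Hartogs' continuity principle then yields that each $f_n$ extends holomorphically to a neighborhood of $\Phi_1(\overline{\Delta})\ni P$. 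Varying $(a,P)$ covers all of $\widehat{\cW}_\mu'$, and uniqueness of analytic continuation from the connected set $\cW_\mu'$ patches the local extensions into a single $\tilde f_n\in\mathcal{O}(\widehat{\cW}_\mu')$.

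For the strict inclusion $A^2(\overline{\cW_\mu'})\subsetneq A^2(\cW_\mu')$, note that $\cW_\mu'$ is a bounded domain of holomorphy by Corollary~\ref{cor:unbounded}(ii); Cartan--Thullen then supplies a function $g\in A^2(\cW_\mu')$ with natural boundary $\partial\cW_\mu'$, which in particular fails to extend to any point of $\widehat{\cW}_\mu'\setminus\overline{\cW_\mu'}$ and therefore cannot lie in $A^2(\overline{\cW_\mu'})$. Finally, for $\Omega\in\mathscr{C}_\mu$ the inclusion $\cW_\mu'\subset\Omega$ is all that the argument uses, because the boundaries of the disks $\Phi_s$ lie in $\overline{\cW_\mu'}\subset\overline{\Omega}$; the same construction extends any $f$ on a neighborhood of $\overline{\Omega}$ to $\Omega\cup\widehat{\cW}_\mu'\supsetneq\Omega$. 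The main obstacle is the geometric construction in paragraphs two and three: the Blaschke-based family must simultaneously keep $\Phi_s(\partial\Delta)$ in $\overline{\cW_\mu'}$ and maintain the $(z_2,z_3)$-annulus conditions throughout the deformation, and this is exactly where the hypothesis $\mu>4\pi$ intervenes, through the sharp inequality $a+4\pi<\mu$.
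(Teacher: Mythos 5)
The central difficulty is that your analytic disks necessarily pass through $\{z_2=0\}$, and this breaks the argument at every stage. Because you insist that the second coordinate have constant modulus $c$ on $\partial\Delta$, its restriction to the disk is (a constant times) a finite Blaschke product, which, being nonconstant, must vanish somewhere in $\Delta$; so $\Phi_1(\overline{\Delta})$ contains a point with $z_2=0$, which lies neither in $\overline{\cW_\mu'}$ nor in $\widehat{\cW}_\mu'$. Your key claim --- ``each $f_n$ extends holomorphically to a neighborhood of $\Phi_1(\overline{\Delta})$'' --- is then simply false: the function $1/z_2$ is holomorphic on a neighborhood of $\overline{\cW_\mu'}$ (which is contained in $\{e^{-\mu/2}\leq|z_2|\leq e^{\mu/2}\}$), hence belongs to $A^2(\overline{\cW_\mu'})$, yet it has no holomorphic extension across any point of $\{z_2=0\}$. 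This also shows that no admissible Kontinuit\"atssatz family can exist for your disk: a continuous family with all boundaries in $\overline{\cW_\mu'}\subset\{z_2\neq0\}$ and initial disk inside cannot terminate at a disk meeting $\{z_2=0\}$ (continue $1/z_2$ along it, or count zeros of the $z_2$-component via the argument principle). Your specific deformation is indeed not a legitimate family: a sequence of Blaschke factors, each vanishing in $\Delta$, cannot converge uniformly on $\overline{\Delta}$ to a unimodular constant, so ``the modulus identity $|cB_s|\equiv c$ on $\partial\Delta$ is preserved throughout'' and ``$B_s$ degenerates to a unimodular constant at $s=0$'' are incompatible with the continuity the principle requires. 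The same defect undercuts your limiting step: the ``slightly contracted disk through $P$'' still passes through $z_2=0$, so $\widetilde f_n$ is not defined on it and the one-variable Cauchy representation of $\widetilde f_n(P)$ along its boundary is unavailable. Finally, even granting local extensions, the patching ``by uniqueness of analytic continuation'' is asserted rather than proved; in a worm-type geometry (where $E_\kappa$, $\kappa\notin\bbZ$, exhibits nontrivial winding) single-valuedness of the glued extension is exactly the point that needs an argument.

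For contrast, the paper avoids disks in the $z_2$-direction altogether: for each $a$ it writes the extension by a double Cauchy integral in $(\zeta_2,\zeta_3)$ over $\partial\mathscr{A}_a\times\partial\mathscr{A}_a$ with $z_1\in\overline{\bigtriangleup}_a$ as a parameter, using that $\overline{\bigtriangleup}_a\times\partial\mathscr{A}_a\times\partial\mathscr{A}_a$ and $\{0\}\times\overline{\mathscr{A}}_a\times\overline{\mathscr{A}}_a$ lie in $\overline{\cW_\mu'}$ when $2a\in[-\mu,\mu-4\pi]$. Integrating over \emph{both} boundary circles of each annulus extends into $\mathscr{A}_a\times\mathscr{A}_a$ without ever claiming anything across $z_2=0$ or $z_3=0$, is manifestly single-valued, and passes to $A^2$-limits because the same integral formula gives locally uniform convergence of the extensions. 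Two smaller points: your appeal to Cartan--Thullen does not by itself produce a \emph{square-integrable} function with natural boundary (one needs an $L^2_h$-domain-of-holomorphy statement), whereas the paper simply exhibits $E_\kappa$ with $\kappa\in\bbC\setminus\bbZ$; and for $\Omega\in\mathscr{C}_\mu$ the reduction is the observation $A^2(\overline{\Omega})\subset A^2(\overline{\cW_\mu'})$ via the inclusion $\cW_\mu'\subset\Omega$, which you do have, but it must be combined with a correct extension argument for $\cW_\mu'$.
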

 
\begin{proof}
First suppose $f$ to be a holomorphic function on a domain $Y\supset\overline{\cW_\mu'}$. Take $a\in\left[-\frac\mu2,\frac\mu2-2\pi\right]$ and consider the annulus
\[\mathscr{A}_a:=A\big(0,e^{\frac{a}2},e^{\frac{a}2+\pi}\big)=\left\{\zeta \in \bbC : a<\log|\zeta|^2<a+2\pi\right\}\,,\]
whose oriented boundary $\partial\mathscr{A}_a$ consists of the circles $\log|\zeta|^2=a$ and $\log|\zeta|^2=a+2\pi$. Consider also  the unit disk centered at $e^{2ia}$, namely $\bigtriangleup_a=\bigtriangleup(e^{2ia},1)$. Since $2a\in[-\mu,\mu-4\pi]$, both of the following compact subsets of $\bbC^3$ are contained in $\overline{\cW_\mu'}$:
\[\{0\}\times\overline{\mathscr{A}}_a\times\overline{\mathscr{A}}_a=\big\{(0,z_2,z_3):\, a \leq \log|z_j|^2 \leq a+2\pi\textrm{\ for\ }j\in\{2,3\}\big\}\]
and
\begin{align*}
\overline{\bigtriangleup}_a\times\partial\mathscr{A}_a\times\partial\mathscr{A}_a=\big\{ (z_1,z_2,z_3):\, |z_1-e^{2ia}| \leq 1,\log|z_j|^2\in\{a,a+2\pi\}\textrm{\ for\ }j\in\{2,3\}\big\}\,.
\end{align*}
Set
\begin{equation}\label{eq:cauchyextension}
F_a(z_1,z_2,z_3) := \frac{1}{(2\pi i)^2} \int_{\partial\mathscr{A}_a}\int_{\partial\mathscr{A}_a} \frac{f(z_1,\zeta_2,\zeta_3)}{(\zeta_2-z_2)(\zeta_3-z_3)}\, d\zeta_2 d\zeta_3\,.
\end{equation}
This defines a function $F_a$ that is holomorphic in a neighborhood of the set
\[\overline{\bigtriangleup}_a\times\mathscr{A}_a\times\mathscr{A}_a=\big\{(z_1,z_2,z_3):\,|z_1-e^{2ia}|\leq1,a<\log|z_j|^2<a+2\pi\textrm{\ for\ }j\in\{2,3\}\big\}\]
and coincides with $f$ in a neighborhood of $\{0\}\times\mathscr{A}_a\times\mathscr{A}_a$ by Cauchy's integral formula. Thus, $F_a$ coincides with $f$ in $\overline{\cW_\mu'}\cap(\overline{\bigtriangleup}_a\times\mathscr{A}_a\times\mathscr{A}_a)$.
We have thus constructed a holomorphic extension $\widehat{f}$ of $f$ to the domain $\widehat{Y}:=Y\cup\bigcup_{-\frac\mu2<a<\frac\mu2-2\pi}\bigtriangleup_a\times\mathscr{A}_a\times\mathscr{A}_a$.

Now consider a sequence $\{f_n\}_{n\in\bbN}\subset A^2(\cW_\mu')$ of functions that admit holomorphic extensions to domains $Y_n\supset\overline{\cW_\mu'}$, whence holomorphic extensions $\widehat{f}_n$ to the domains $\widehat{Y}_n:=Y_n\cup\bigcup_{-\frac\mu2<a<\frac\mu2-2\pi}\bigtriangleup_a\times\mathscr{A}_a\times\mathscr{A}_a$. Let us assume $f_n\to f$ in $A^2(\cW_\mu')$, whence $f_n\to f$ uniformly in every compact subset of $\cW_\mu'$. Taking into account, for all $a\in\left(-\frac\mu2,\frac\mu2-2\pi\right)$ and for all $(z_1,z_2,z_3)\in\bigtriangleup_a\times\mathscr{A}_a\times\mathscr{A}_a$, the Cauchy integral formula
\begin{align*}
\widehat{f}_n(z_1,z_2,z_3) &= \frac{1}{(2\pi i)^2} \int_{\partial\mathscr{A}_a}\int_{\partial\mathscr{A}_a} \frac{\widehat{f}_n(z_1,\zeta_2,\zeta_3)}{(\zeta_2-z_2)(\zeta_3-z_3)}\, d\zeta_2 d\zeta_3\\
&= \frac{1}{(2\pi i)^2} \int_{\partial\mathscr{A}_a}\int_{\partial\mathscr{A}_a} \frac{f_n(z_1,\zeta_2,\zeta_3)}{(\zeta_2-z_2)(\zeta_3-z_3)}\, d\zeta_2 d\zeta_3\,,
\end{align*}
we conclude that the sequence $\{\widehat{f}_n\}_{n\in\bbN}$, too, converges uniformly in every compact subset of $\widehat{\cW}_\mu':=\cW_\mu'\cup\bigcup_{-\frac\mu2<a<\frac\mu2-2\pi}\bigtriangleup_a\times\mathscr{A}_a\times\mathscr{A}_a$. These uniform limits define a holomorphic function $\widehat{f}:\widehat{\cW}_\mu'\to\bbC$, which coincides with $f$ in $\cW_\mu'$ by construction.

Now take any $\Omega\in\mathscr{C}_\mu$. By~\eqref{eq:worminclusions}, every $f\in A^2(\overline{\Omega})$ also belongs to $A^2(\overline{\cW_\mu'})$. Thus, $f$ extends holomorphically to $\widehat{\Omega}:=\Omega\cup\bigcup_{-\frac\mu2<a<\frac\mu2-2\pi}\bigtriangleup_a\times\mathscr{A}_a\times\mathscr{A}_a$.

For any $\kappa\in\bbC\setminus\bbZ$, the function $E_\kappa$ (belonging to both $A^2(\cW_\mu')$ and $A^2(\Omega)$) cannot be holomorphically extended to $\bigtriangleup_a\times\mathscr{A}_a\times\mathscr{A}_a$ for any $a\in\left(-\frac\mu2,\frac\mu2-2\pi\right)$. This proves the proper inclusions $A^2(\overline{\cW_\mu'})\subsetneq A^2(\cW_\mu')$ and $A^2(\overline{\Omega})\subsetneq A^2(\Omega)$.
\end{proof}

Our final result concerns the irregularity of the Bergman projection of every $\Omega\in\mathscr{C}_\mu$. Just as in the case of the Diederich-Forn{\ae}ss worm domains, see~\cite{Barrett-Acta}, the Sobolev space $W^{\index,2}(\cW)$ is not preserved by the Bergman projection for sufficiently large $\index$.

\begin{theorem}\label{non-regularity}
Let $\mu>\frac\pi2$ and set $\nu=\frac{\pi}{2\mu}$. For all $\Omega\in\mathscr{C}_\mu$, the Bergman projection associated to $\Omega$ does not map $W^{\index,2}(\Omega)$ into itself when $\index\geq\nu$.
\end{theorem}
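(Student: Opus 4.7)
The plan is to adapt Barrett's proof~\cite{Barrett-Acta} for the 2D worm to our three-dimensional setting, arguing by contradiction. Suppose $P = P_\Omega$ extends to a bounded operator on $W^{\index,2}(\Omega)$ for some $\index \geq \nu = \pi/(2\mu)$.

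The natural obstructions are the functions $E_\kappa$ of Definition~\ref{def:logarithm}. By Proposition~\ref{logarithmandpowers}, $E_\kappa \in A^2(\Omega)$ precisely when $\Re\kappa > -1$. The identity $\partial_{z_1}^m E_\kappa = \kappa(\kappa-1)\cdots(\kappa-m+1)E_{\kappa-m}$, combined with the same proposition (and the fact that $E_\kappa$ is locally constant in $z_2,z_3$), implies $E_\kappa \notin W^{m,2}(\Omega)$ as soon as $\Re\kappa \leq m-1$. Choosing $\kappa = -1 + \nu + ib$ with a real parameter $b$ thus places $E_\kappa$ in $A^2(\Omega)\setminus W^{\index,2}(\Omega)$ for every $\index \geq \nu$.

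To produce the quantitative failure of continuity I would construct a family of compactly-supported smooth test functions $u_b \in C^\infty_c(\Omega) \subset W^{\index,2}(\Omega)$, essentially of the form $u_b := \psi\,E_\kappa$, where $\psi$ is a smooth cutoff depending only on $(|z_2|,|z_3|)$ and supported where $\eta$ vanishes (which is nonempty, containing $A(0,e^{-\mu/2},e^{\mu/2})\times A(0,e^{-\mu/2},e^{\mu/2})$). The key is to compute $Pu_b$ via two layers of symmetry: first, the $T^2$ of biholomorphic phase-rotations $(z_1,z_2,z_3)\mapsto(z_1,e^{i\alpha_2}z_2,e^{i\alpha_3}z_3)$; second, on the subregion where $\eta\equiv 0$, the $\bbR^2$-family of biholomorphisms
\begin{equation*}
\Phi_{t_2,t_3}(z_1,z_2,z_3) = \bigl(e^{2i(t_2+t_3)}z_1,\,e^{t_2}z_2,\,e^{t_3}z_3\bigr),
\end{equation*}
well-defined for $(t_2,t_3)$ small enough to keep the shifted moduli in $\eta^{-1}(0)$. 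Under $\Phi_{t_2,t_3}$, $E_\kappa$ transforms by the character $e^{2i\kappa(t_2+t_3)}$. Fourier-decomposing in the $T^2$-angles and Mellin-decomposing along the $\Phi$-orbits, the Bergman projection reduces (exactly as in Barrett) to a family of one-variable Bergman projections on the disk fibres $\bigtriangleup(e^{i\log|z_2z_3|^2},1)$. Explicit computation produces a meromorphic amplification factor whose poles, shifted by the $2\mu$-winding of the centres, force blow-up of $\|Pu_b\|_{W^{\index,2}}/\|u_b\|_{W^{\index,2}}$ as $|b|\to\infty$, exactly matching the Sobolev threshold $\index = \nu$.

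The main obstacle, absent from Barrett's 2D setting, is that the base $(z_2,z_3)$ of the disk fibration is itself 2-dimensional, producing a doubly-indexed family of one-variable projections whose loss estimates must be uniform in both indices. The inclusions $\cW'_\mu\subset\Omega\subset\cW'_{\mu'}$ from~\eqref{eq:worminclusions} allow transfer of the analysis to the Hartogs-type model $\cW'_\mu$, on which a closed-form Bergman kernel is available; this transfer and the attendant uniform bookkeeping are presumably the content of the auxiliary tools developed in Section~\ref{sec:proof}.
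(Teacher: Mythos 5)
Your overall plan (``adapt Barrett, argue by contradiction, the threshold is $\nu=\pi/(2\mu)$, the obstruction lives in the $E_\kappa$'s'') points in the right direction, but the two steps that would actually carry the proof are missing or do not work as stated. First, the symmetry reduction you propose is not available on $\Omega$. A general member of $\mathscr{C}_\mu$ is invariant only under the torus rotations in $z_2,z_3$; the $\bbR^2$-family $\Phi_{t_2,t_3}$ is \emph{not} a group of automorphisms of $\Omega$ (nor of the truncated model $\cW'_\mu$) -- it only acts on unbounded model domains such as $\cW'_\infty$ or $D_\mu$ -- and a ``locally defined'' action for small $(t_2,t_3)$ does not permit the Mellin decomposition you invoke. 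Consequently there is no exact reduction of $P_\Omega$ to one-variable projections and no ``explicit meromorphic amplification factor'' for $\Omega$ itself; even on the model the correct reduction is not to the disk fibres $\bigtriangleup(e^{i\log|z_2z_3|^2},1)$ but, after unwinding (Remark~\ref{rmk:unwinding}), to weighted Bergman spaces $A^2(S_\beta,\omega_{j,k})$ on a strip in the $w_1$-variable (Lemma~\ref{lem:unwound}). Second, your closing appeal to the inclusions~\eqref{eq:worminclusions} to ``transfer the analysis'' to $\cW'_\mu$ is not a valid mechanism: Bergman projections and their Sobolev estimates do not pass through domain inclusions, and this transfer is precisely the nontrivial content of the proof. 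Also note that your first paragraph, while correct, proves nothing by itself: exhibiting $E_{\nu-1+ib}\in A^2(\Omega)\setminus W^{\index,2}(\Omega)$ does not show that $P$ fails to preserve $W^{\index,2}(\Omega)$ unless you produce a $W^{\index,2}$ datum whose projection is (quantitatively) bad.

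What the paper actually does, and what your sketch lacks, is Barrett's dilation/exhaustion argument: set $\tau_\lambda(z)=(\lambda z_1,z_2,z_3)$, so that $\Omega^\lambda=\tau_\lambda(\Omega)$ exhausts the model $D_\mu$ as $\lambda\to+\infty$; the assumed bound $\Vert Pg\Vert_{W^{m-s}(\Omega)}\leq C\Vert g\Vert_{W^{m-s}(\Omega)}$ scales to the uniform weighted estimate~\eqref{eq:technical1}, and a weak-limit argument (using $\Omega^{\lambda}\subset D_{\mu'}$ together with the density Lemma~\ref{lem:increasedradius} to identify the limit as $\chi_{D_\mu}P_\infty f$) yields~\eqref{eq:technical2} for the projection of $D_\mu$. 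One then chooses, via the Bell--Ligocka radial-bump device~\cite{belli}, a compactly supported $f$ with $P_\infty f=K(\cdot,z')$, and contradicts Lemma~\ref{lem:l2convergence}, whose residue computation for the $(-1,-1)$-mode kernel shows that $\left(\Re(z_1e^{-i\log|z_2z_3|^2})\right)^s\partial_{z_1}^mK_{-1,-1}(\cdot,z')\notin L^2(D_\mu)$ whenever $m-s\geq\nu$. Without the scaling limit (or some substitute linking the hypothetical regularity on $\Omega$ to a computable model), your argument cannot close, so as it stands there is a genuine gap at the heart of the proof.
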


Our proof follows the lines of~\cite{Barrett-Acta} and is postponed to the next section.\\\\


\section{Proof of Theorem~\ref{non-regularity}}\label{sec:proof}

Although the proof of Theorem~\ref{non-regularity} closely follows the proof given in~\cite{Barrett-Acta} for the classical worm domain in $\bbC^2$, we prefer to include it for the reader's convenience and to highlight a few novelties due to the increased dimension. We will need several tools, starting with the next remark.

\begin{remark}{\em 
Let $\Omega$ be a domain in $\bbC^n$, invariant with respect to the rotation
\[w\mapsto(w_1,\ldots,w_{m-1},e^{i\theta}w_m,w_{m+1},\ldots,w_n)\]
for all $\theta\in\bbR$. The Bergman
space $A^2(\Omega)$ decomposes as  $\bigoplus_{k \in \bbZ}\cH^{(0,\ldots,k,\ldots,0)}(\Omega)$
where
\begin{align*}
\cH^{(0,\ldots,k,\ldots,0)} (\Omega)&= \left\{F\in A^2(\Omega) : \ F(w_1,\ldots,e^{i\theta}w_m,\ldots,w_n) = e^{ik\theta} F(w)\ \forall\,\theta\in\bbR\right\}\\ 
&= \left\{F\in A^2(\Omega) : F(w)w_m^{-k} \mathrm{\ is\ locally\ constant\ in\ }w_m \right\} \, .
\end{align*}
The projection $Q_{(0,\ldots,k,\ldots,0)}: A^2(\Omega)\to\cH^{(0,\ldots,k,\ldots,0)}(\Omega)$ is given by
\[Q_{(0,\ldots,k,\ldots,0)} F (w) = \frac{1}{2\pi} \int_0^{2\pi} F(w_1,\ldots,e^{i\theta}w_m,\ldots,w_n)e^{-ik\theta}\,d\theta \, .\]
If $\Omega$ is also invariant with respect to rotations in the variable $w_l$, we set
\[\cH^{(0,\ldots,j,\ldots,k,\ldots,0)}:=\cH^{(0,\ldots,j,\ldots,0,\ldots,0)}\cap\cH^{(0,\ldots,0,\ldots,k,\ldots,0)}\,.\]
The projection $Q_{(0,\ldots,j,\ldots,k,\ldots,0)} : A^2(\Omega)\to \cH^{(0,\ldots,j,\ldots,k,\ldots,0)}(\Omega)$ is given by
\begin{align*}
&Q_{(0,\ldots,j,\ldots,k,\ldots,0)} F(w) = Q_{(0,\ldots,j,\ldots,0,\ldots,0)} Q_{(0,\ldots,0,\ldots,k,\ldots,0)} F\\
&= \frac{1}{(2\pi)^2} \int_0^{2\pi} \int_0^{2\pi} F(w_1,\ldots,e^{i\theta_l}w_l,\ldots,e^{i\theta_m}w_m,\ldots,w_n)\,e^{-ik\theta_m}\,d\theta_m\,e^{-ij\theta_l}\,d\theta_l \, .
\end{align*}
}
\end{remark}

Just as in~\cite{Barrett-Acta}, rather than working directly with the domains in the class $\mathscr{C}_\mu$, we construct model domains with the next definition. Here, and in the sequel, we adopt the notations
\begin{align*}
I_{\alpha}&=(-\alpha,\alpha)\, ,\\
S_\beta &= \{\zeta\in\bbC: \Im\zeta\in I_\beta\}
\end{align*}
for all $\alpha,\beta>0$.

\begin{definition} {\em
Fix a real number $\mu > 0$. We set
\begin{align*}
D_\mu &= \left\{ (z_1, z_2, z_3) \in \bbC^3: \Re\left(z _1e^{-i \log |z_2 z_3|^2}\right)>0, \log |z_2|^2,\log |z_3|^2\in I_\mu\right\} \, ,\\
D_\mu' &= \left\{ (w_1, w_2, w_3) \in \bbC^3: \Im(w_1)-\log |w_2 w_3|^2\in I_{\frac\pi2}, \log |w_2|^2,\log |w_3|^2\in I_\mu\right\} \, .
\end{align*}
}
\end{definition}

Clearly, $\cW_\mu'\subset D_\mu$ for all $\mu>0$.

\begin{remark}
If $\Omega\in\mathscr{C}_\mu$, if $\Omega=D_\mu$ or if $\Omega=D_\mu'$, then $\Omega$ is invariant with respect to rotations in the second and third variables. Therefore,
\[A^2(\Omega)=\bigoplus_{j,k\in\bbZ}\cH^{(0,j,k)}(\Omega)\,.\]
For the sake of simplicity, we will denote $\cH^{(0,j,k)}(\Omega)$ as $\cH^{j,k}(\Omega)$.
\end{remark}

More useful tools are provided by the next remarks and lemmas.

\begin{remark}\label{rmk:unwinding} {\em
The domain $D_\mu$ is biholomorphic to $D_\mu'$ via the ``unwinding'' map $z=(z_1, z_2, z_3)\mapsto(\ell(z),z_2, z_3)$, whose inverse is $(w_1, w_2, w_3)\mapsto(e^{w_1}, w_2, w_3)$. Thus, we have an isometric isomorphism $T:A^2(D_\mu)\to A^2(D_\mu')$ with $(TF)(w_1, w_2, w_3)=e^{w_1}F(e^{w_1}, w_2, w_3)$ and $(T^{-1}G)(z_1, z_2, z_3)=z_1^{-1}G(\ell(z),z_2, z_3)$. Clearly, $T$ maps $\cH^{j,k}(D_\mu)$ isomorphically and isometrically onto $\cH^{j,k}(D_\mu')$ for all $j,k\in\bbZ$.
}
\end{remark}

The advantage of $D_\mu'$ over $D_\mu$ is that its fiber over each couple $(w_1,w_2)$ and its fiber over each couple $(w_1,w_3)$ are both connected, a property that allows to prove the next lemma. Before its statement and proof, we recall a few useful properties.

\begin{remark}\label{rmk:paley}
Let $\beta>0$ and consider, on the strip $S_\beta$, a weight $\omega:S_\beta\to\bbR$ of the form $\omega(x+iy)=\alpha(y)$ for some integrable $\alpha:\bbR\to[0,+\infty)$ with $\mathrm{\mathop{supp}}(\alpha)\subseteq I_\beta$. Set $\widetilde \alpha(\xi)=\widehat \alpha(-2i\xi)=\int_{I_\beta}\alpha(y)\,e^{-2y\xi}\,dy$. The Fourier transform associating to any $\phi\in L^2(\bbR,\widetilde \alpha)$ the complex function
\[S_\beta\to\bbC\quad\zeta\mapsto\frac1{2\pi}\int_\bbR \phi(\xi)\,e^{i\zeta\xi}\,d\xi\,,\]
is an isometric isomorphism from $L^2(\bbR,\widetilde \alpha)$ to $A^2(S_\beta,\omega)$ (see~\cite[Proof of Lemma 2]{Barrett-Acta}). In particular, according to~\cite[\S2]{Barrett-Acta}, the reproducing kernel $K_\omega$ of $A^2(S_\beta,\omega)$ has the form
\[K_{\omega}(\zeta,\zeta')=\frac1{2\pi}\int_\bbR \frac{e^{i(\zeta-\bar\zeta')\xi}}{\widehat \alpha(-2i\xi)}\,d\xi\,.\]
\end{remark}

\begin{lemma}\label{lem:unwound}
Let $j,k\in\bbZ,\mu>0$, set $\beta=2\mu+\frac\pi2$ and define $\omega_{j,k}=\omega_{j,k}^\mu:S_\beta\to\bbR$ as
\[\omega_{j,k}(x+iy)=\pi^2(e^{(j+1)(\cdot)}\chi_{I_\mu})*(e^{(k+1)(\cdot)}\chi_{I_\mu})*\chi_{I_{\frac\pi2}}(y)\,.\]
Then
\[L_{j,k}:\cH^{j,k}(D_\mu')\to A^2(S_\beta,\omega_{j,k})\qquad (L_{j,k}G)(\zeta)=G(\zeta, w_2, w_3)w_2^{-j}w_3^{-k}\]
is an isometric isomorphism. Thus: if $K_{\omega_{j,k}}$ denotes the reproducing kernel of $A^2(S_\beta,\omega_{j,k})$, then the reproducing kernel of $\cH^{j,k}(D_\mu')$ is $(w,w')\mapsto K_{\omega_{j,k}}(w_1,w_1')\,(w_2\,\overline w_2')^j\,(w_3\,\overline w_3')^k$. Moreover,
\begin{align*}
K_{\omega_{-1,-1}}(\zeta,\zeta')&=\int_\bbR\frac{\xi^3\,e^{i(\zeta-\bar\zeta')\xi}}{2\pi^3\sinh^2(2\mu\xi)\sinh(\pi\xi)}\,d\xi\,.
\end{align*}
For $\mu>\frac\pi2$, we derive
\begin{align*}
K_{\omega_{-1,-1}}(\zeta,\zeta')&=e^{-(\zeta-\bar\zeta')\nu}(C_\nu(\zeta-\bar\zeta')+C_\nu')+O\left(e^{-\Re(\zeta-\bar\zeta')\nu'}\right)&\mathrm{if\,}\Re(\zeta-\bar\zeta')>0\\
&=e^{(\zeta-\bar\zeta')\nu}(C_\nu(\zeta-\bar\zeta')-C_\nu')+O\left(e^{\Re(\zeta-\bar\zeta')\nu'}\right)&\mathrm{if\,}\Re(\zeta-\bar\zeta')<0\,,
\end{align*}
where $\nu=\frac{\pi}{2\mu}<1, \nu'=\min\{2\nu,1\}>\nu, C_\nu=-\frac{i\nu^5}{2\pi^5\sin(\nu\pi)}$ and $C_\nu'=\frac{i\nu^4}{2\pi^5}\frac{3-\nu\pi\cot(\nu\pi)}{\sin(\nu\pi)}$.
\end{lemma}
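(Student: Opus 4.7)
My approach splits naturally along the three assertions.

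For the isometric isomorphism $L_{j,k}$, I begin from the observation that each $G\in\cH^{j,k}(D_\mu')$ transforms as $G(w_1,e^{i\theta_2}w_2,e^{i\theta_3}w_3)=e^{ij\theta_2+ik\theta_3}G(w)$, so $G(w)w_2^{-j}w_3^{-k}$ is rotation-invariant in $w_2$ and $w_3$ and hence locally constant in those variables. The connectedness of the fibers of $D_\mu'$ over the first coordinate -- the whole point of passing from $D_\mu$ to $D_\mu'$ -- forces this quotient to depend only on $w_1=\zeta$, defining a holomorphic $\phi=L_{j,k}G$ on $S_\beta$; conversely, given $\phi$ the formula $G(w):=\phi(w_1)w_2^jw_3^k$ is holomorphic on $D_\mu'$ and lies in $\cH^{j,k}(D_\mu')$. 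The isometry is verified by writing $w_1=x+iy$ and $w_\ell=e^{s_\ell/2}e^{i\theta_\ell}$, so that $|w_\ell|^{2j}r_\ell\,dr_\ell=\tfrac12e^{(j+1)s_\ell}\,ds_\ell$; the angular integrations contribute a factor $(2\pi)^2$ and the constraints $s_2,s_3\in I_\mu$, $y-s_2-s_3\in I_{\pi/2}$ rearrange the $(s_2,s_3)$-integral into a convolution, giving
\begin{align*}
\|G\|^2=\pi^2\int_{\bbR^2}|\phi(x+iy)|^2\bigl((e^{(j+1)\cdot}\chi_{I_\mu})*(e^{(k+1)\cdot}\chi_{I_\mu})*\chi_{I_{\pi/2}}\bigr)(y)\,dx\,dy,
\end{align*}
which is exactly $\|\phi\|^2_{A^2(S_\beta,\omega_{j,k})}$.

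Once $L_{j,k}$ is known to be an isometric isomorphism, the reproducing kernel of $\cH^{j,k}(D_\mu')$ is obtained by transfer. For fixed $w'$, the function $w\mapsto K_{\omega_{j,k}}(w_1,w_1')(w_2\bar w_2')^j(w_3\bar w_3')^k$ lies in $\cH^{j,k}(D_\mu')$ -- the scalars $(\bar w_2')^j(\bar w_3')^k$ multiply the kernel element $\zeta\mapsto K_{\omega_{j,k}}(\zeta,w_1')\in A^2(S_\beta,\omega_{j,k})$ -- and the reproducing property of $K_{\omega_{j,k}}$ combined with the isometry yields $\langle G,K(\cdot,w')\rangle=\phi(w_1')(w_2')^j(w_3')^k=G(w')$. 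For the explicit formula when $j=k=-1$, I use Remark~\ref{rmk:paley} with $\alpha(y)=\pi^2(\chi_{I_\mu}*\chi_{I_\mu}*\chi_{I_{\pi/2}})(y)$: the identity $\int_{-a}^{a}e^{-2y\xi}\,dy=\sinh(2a\xi)/\xi$ together with the fact that the Laplace-type transform $\widetilde\alpha$ turns convolutions into products immediately gives $\widetilde\alpha(\xi)=\pi^2\xi^{-3}\sinh^2(2\mu\xi)\sinh(\pi\xi)$, hence the stated integral representation of $K_{\omega_{-1,-1}}$.

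Finally, the asymptotic expansion for $\mu>\pi/2$ is a contour-deformation calculation. For $\Re(\zeta-\bar\zeta')>0$, the exponential $e^{i(\zeta-\bar\zeta')\xi}$ decays as $\Im\xi\to+\infty$, so I push the contour into the upper half plane. The factor $\xi^3$ cancels the triple pole at the origin, and the remaining poles in the upper half plane are the double poles at $\xi=ik\nu$ from $\sinh^2(2\mu\xi)$ ($k\in\bbZ_{>0}$) and the simple poles at $\xi=i\ell$ from $\sinh(\pi\xi)$ ($\ell\in\bbZ_{>0}$). Since $\nu<1$, the pole closest to the real axis is the double pole at $\xi=i\nu$, and the next pole sits at $i\nu'$ with $\nu'=\min\{2\nu,1\}$; pushing the contour past $i\nu$ but stopping just below $i\nu'$ produces a remainder of size $O(e^{-\Re(\zeta-\bar\zeta')\nu'})$. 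The leading residue is extracted from the double-pole formula
\begin{align*}
\mathrm{Res}_{\xi=i\nu}=\lim_{\xi\to i\nu}\frac{d}{d\xi}\Bigl[(\xi-i\nu)^2\,\frac{\xi^3\,e^{i(\zeta-\bar\zeta')\xi}}{2\pi^3\sinh^2(2\mu\xi)\sinh(\pi\xi)}\Bigr],
\end{align*}
exploiting $\sinh(2\mu\xi)=-\sinh(2\mu(\xi-i\nu))$ (valid because $2\mu\nu=\pi$) to see that $(\xi-i\nu)^2/\sinh^2(2\mu\xi)=(2\mu)^{-2}(1+O((\xi-i\nu)^2))$, so its derivative vanishes at $\xi=i\nu$ and the computation reduces to evaluating $(4\mu^2)^{-1}\frac{d}{d\xi}[\xi^3e^{i(\zeta-\bar\zeta')\xi}/(2\pi^3\sinh(\pi\xi))]$ at $\xi=i\nu$. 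Differentiation of the exponential produces the $(\zeta-\bar\zeta')$-linear coefficient $C_\nu$, while the derivatives of $\xi^3$ and of $\sinh(\pi\xi)^{-1}$ at $\xi=i\nu$ assemble into $C_\nu'$. The case $\Re(\zeta-\bar\zeta')<0$ follows by closing the contour downward or by Hermitian symmetry $K(\zeta,\zeta')=\overline{K(\zeta',\zeta)}$. The main obstacle is bookkeeping in the double-pole residue: tracking the interplay of factors $i$, $\sin(\pi\nu)$, $\cos(\pi\nu)$, and $\nu$ arising from differentiating $\sinh(\pi\xi)^{-1}$ at $\xi=i\nu$ is where sign or coefficient errors are easiest to make.
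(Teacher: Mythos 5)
Your proposal is correct and follows essentially the same route as the paper: the fiberwise change of variables giving the convolution weight $\omega_{j,k}$ and the connectedness/local-constancy argument for surjectivity, the Paley--Wiener identification from Remark~\ref{rmk:paley} yielding the integral formula, and the contour shift past the double pole at $\pm i\nu$ (with the same key observation that $(\xi\mp i\nu)/\sinh(2\mu\xi)$ has vanishing derivative there, reducing the residue to $\tfrac{\nu^2}{\pi^2}$ times a single derivative) with remainder $O\bigl(e^{\mp\Re(\zeta-\bar\zeta')\nu'}\bigr)$. The only difference is that you leave the final residue arithmetic for $C_\nu,C_\nu'$ as described rather than carried out, and your Hermitian-symmetry shortcut for $\Re(\zeta-\bar\zeta')<0$ is legitimate since both constants are purely imaginary.
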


\begin{proof}
Let $\pi_1:D_\mu'\to\bbC$ denote the projection map onto the first variable: we have
\begin{align*}
\pi_1(D_\mu')&= \left\{ w_1 \in \bbC: \exists\ t_2,t_3 \in I_\mu \mathrm{\ s.t.\ } \Im(w_1)-t_2-t_3\in I_{\frac\pi2} \right\}\\
& = S_{\beta}\quad \mathrm{with\ }\beta=2\mu+\frac\pi2\,.
\end{align*}
Over each point $x+iy\in S_\beta$, we have the fiber
\begin{align*}
\pi_1^{-1}(x+iy)&=\left\{ (x+iy,w_2, w_3) : \log |w_2|^2,\log |w_3|^2\in I_\mu, y-\log |w_2|^2-\log |w_3|^2\in I_{\frac\pi2} \right\}\,.
\end{align*}
We now construct weights $\omega_{j,k}:S_\beta\to\bbR$ such that
\[M_{j,k}:A^2(S_\beta,\omega_{j,k})\to\cH^{j,k}(D_\mu')\qquad(M_{j,k}f)(w_1, w_2, w_3)=f(w_1)w_2^jw_3^k\]
is an injective linear map and an isometry. For any holomorphic $f,g:S_\beta\to\bbC$, we have
\begin{align*}
\langle M_{j,k}f,M_{j,k}g\rangle_{\cH^{j,k}(D_\mu')}&=\int_{S_\beta}f(w_1)\overline{g(w_1)}\int_{\pi_1^{-1}(w_1)}|w_2|^{2j}|w_3|^{2k}\,dV(w_3)\,dV(w_2)\,dV(w_1)\\
&=\int_{S_\beta}f(w_1)\overline{g(w_1)}\,\omega_{j,k}(w_1)\,dV(w_1)=\langle f,g\rangle_{A^2(S_\beta,\omega_{j,k})}
\end{align*}
if we set, for all $x+iy\in S_\beta$,
\begin{align*}
\omega_{j,k}(x+iy)&=\int_{\pi_1^{-1}(x+iy)}|w_2|^{2j}|w_3|^{2k}\,dV(w_3)\,dV(w_2)\\
&=4\pi^2\iint_{\bbR^2}\chi_{I_\mu}\left(\log r_2^2\right)\,\chi_{I_\mu}\left(\log r_3^2\right)\,\chi_{I_{\frac\pi2}}\left(y-\log r_2^2-\log r_3^2\right)\,r_2^{2j+1}r_3^{2k+1}\,dr_3\,dr_2\\
&=\pi^2\int_\bbR e^{(j+1)s_2}\chi_{I_\mu}(s_2)\int_\bbR e^{(k+1)s_3}\chi_{I_\mu}(s_3)\chi_{I_{\frac\pi2}}(y-s_2-s_3)\,ds_3\,ds_2\\
&=\pi^2\int_\bbR e^{(j+1)s_2}\chi_{I_\mu}(s_2)\left(e^{(k+1)(\cdot)}\chi_{I_\mu}*\chi_{I_{\frac\pi2}}\right)(y-s_2)\,ds_2\\
&=\pi^2\,(e^{(j+1)(\cdot)}\chi_{I_\mu})*(e^{(k+1)(\cdot)}\chi_{I_\mu})*\chi_{I_{\frac\pi2}}(y)\,.
\end{align*}

We now fix any $G\in\cH^{j,k}(D_\mu')\subset\cH^{k}(D_\mu')$, recalling that $G(w_1, w_2, w_3)w_3^{-k}$ is locally constant in $w_3$ and $G(w_1, w_2, w_3)w_2^{-j}$ is locally constant in $w_2$. We are going to show that the function $G(w_1, w_2, w_3)w_2^{-j}w_3^{-k}$ is constant in $(w_2,w_3)$: the resulting function of $w_1$ will automatically be an element of $A^2(S_\beta,\omega_{j,k})$ (which proves that $M_{j,k}$ is surjective and an isomorphism, as desired). Let $\pi_{1,2}:D_\mu'\to\bbC^2$ denote the projection map onto the first two variables: then
\begin{align*}
\pi_{1,2}(D_\mu')&= \left\{ (w_1,w_2) \in \bbC^2: \log |w_2|^2\in I_\mu,\,\exists\ t_3 \in I_\mu \mathrm{\ s.t.\ } \Im(w_1)-\log |w_2|^2-t_3\in I_{\frac\pi2} \right\}\\
&= \left\{ (w_1,w_2) \in \bbC^2: \log |w_2|^2\in I_\mu,\,\Im(w_1)-\log |w_2|^2\in I_{\mu+\frac\pi2} \right\}\,.
\end{align*}
Over each pair $(w_1,w_2)\in\pi_{1,2}(D_\mu')$, we have the fiber
\begin{align*}
&\pi_{1,2}^{-1}(w_1,w_2)\\
&=\left\{ (w_1, w_2, w_3) : \log |w_3|^2\in I_\mu \cap \left(\Im(w_1)-\log |w_2|^2-\frac\pi2,\Im(w_1)-\log |w_2|^2+\frac\pi2\right)\right\}\,,
\end{align*}
which is a nonempty annulus because $\Im(w_1)-\log |w_2|^2-\frac\pi2<\mu$ and $\Im(w_1)-\log |w_2|^2+\frac\pi2>-\mu$. Since every fiber $\pi_{1,2}^{-1}(w_1,w_2)$ is connected, the function $G(w_1, w_2, w_3)w_3^{-k}$ is not only locally constant but constant in $w_3$. An analogous argument proves that the function $G(w_1, w_2, w_3)w_2^{-j}$ is not only locally constant but constant in $w_2$. The proof of the first statement is complete.

The fact that $M_{j,k}:A^2(S_\beta,\omega_{j,k})\to\cH^{j,k}(D_\mu')$, which maps $f$ into $(w_1,w_2,w_3)\mapsto f(w_1)w_2^jw_3^k$, is an isometry yields the second statement.

Our next aim is computing and estimating the reproducing kernel $K_{\omega_{-1,-1}}$ of $A^2(S_\beta,\omega_{-1,-1})$. We have
\[\omega_{-1,-1}(x+iy)=\pi^2\,\chi_{I_\mu}*\chi_{I_\mu}*\chi_{I_{\frac\pi2}}(y)\]
and the Fourier-Laplace transform of the latter function, computed at $-2i\xi$, is
\[\pi^2\ (\widehat\chi_{I_\mu})^2(-2i\xi)\ \widehat\chi_{I_{\frac\pi2}}(-2i\xi) =\pi^2\xi^{-3}\sinh^2(2\mu\xi)\sinh(\pi\xi)\,.\]
By Remark~\ref{rmk:paley},
\begin{align*}
K_{\omega_{-1,-1}}(\zeta,\zeta')&=\int_\bbR\cI(\xi)\,d\xi\,,\qquad
\cI(\xi)=\frac{\xi^3\,e^{i(\zeta-\bar\zeta')\xi}}{2\pi^3\sinh^2(2\mu\xi)\sinh(\pi\xi)}\,.
\end{align*}
When viewed as a function of a complex variable $\xi=x+iy$, the integrand function $\cI(\xi)$ has squared modulus
\[|\cI(x+iy)|^2=\frac{(x^2+y^2)^3\,e^{-2\Im(\zeta-\bar\zeta')x}\,e^{-2\Re(\zeta-\bar\zeta')y}}{2\pi^3(\sinh^2(2\mu x)+\sin^2(2\mu y))^2(\sinh^2(\pi x)+\sin^2(\pi y))}\,.\]
Moreover, $\cI$ has: a simple pole at each point $ik$ with $k\in\bbZ^*$, because
\[\sinh(\pi\xi)=(-1)^k\sum_{n\in\bbN}\frac{(\pi\xi-ik\pi)^{2n+1}}{(2n+1)!}=(-1)^k\sum_{n\in\bbN}\frac{\pi^{2n+1}}{(2n+1)!}(\xi-ik)^{2n+1}\,;\]
a double pole at each point $ik\nu$ with $k\in\bbZ^*$, where $\nu=\frac{\pi}{2\mu}$, because
\begin{align*}
\sinh(2\mu\xi)&
=(-1)^k\sum_{n\in\bbN}\frac{(2\mu)^{2n+1}}{(2n+1)!}(\xi-ik\nu)^{2n+1}=(-1)^k\left(2\mu(\xi-ik\nu)+\frac{4}{3}\mu^3(\xi-ik\nu)^3+\ldots\right)\,.
\end{align*}
Our hypothesis $\mu>\frac\pi2$ yields $\nu<1$. Thus, the poles nearest to $\bbR$ are $\pm i\nu$, followed by $\pm i\nu'$ with $\nu'=\min\{2\nu,1\}>\nu$. If $\Re(\zeta-\bar\zeta')>0$, let $y_0\in(\nu,\nu')$ and $p_0=i\nu$; if $\Re(\zeta-\bar\zeta')<0$, let $y_0\in(-\nu',-\nu)$ and $p_0=-i\nu$. Along the line $y=y_0$, the modulus $|\cI(x+iy)|$ decays well as $x\to\pm\infty$. A standard contour integration argument yields that
\begin{equation}\label{eq:contourintegration}
K_{\omega_{-1,-1}}(\zeta,\zeta')=\int_\bbR \cI(\xi)\,d\xi=\int_{\bbR+iy_0} \cI(\xi)\,d\xi+\mathop{\mathrm{Res}}(\cI,p_0)\,.
\end{equation}
Clearly,
\[\int_{\bbR+iy_0} \cI(\xi)\,d\xi=O\left(e^{-\Re(\zeta-\bar\zeta')y_0}\right)\,.\]
To compute $\mathop{\mathrm{Res}}(\cI,p_0)$ at $p_0=\pm i\nu$, we first define the functions $\cG_1^\pm(\xi)=(\xi\mp i\nu)\sinh^{-1}(2\mu\xi)$, which have $\cG_1^\pm(\pm i\nu)=-(2\mu)^{-1}=-\frac\nu\pi$ and $(\cG_1^\pm)'(\pm i\nu)=0$, as well as the function $\cG_2(\xi)=(2\pi^3)^{-1}\xi^3\sinh^{-1}(\pi\xi)e^{i(\zeta-\bar\zeta')\xi}$. We then compute
\begin{align*}
\mathop{\mathrm{Res}}(\cI,\pm i\nu)&=\lim_{\xi\to\pm i\nu}\frac{d}{d\xi}\left((\xi\mp i\nu)^2\cI(\xi)\right)
=\lim_{\xi\to\pm i\nu}\frac{d}{d\xi}\left(\cG_1^\pm(\xi)^2\cG_2(\xi)\right)\\
&=2\cG_1^\pm(\pm i\nu)\,(\cG_1^\pm)'(\pm i\nu)\,\cG_2(\pm i\nu)+\cG_1^\pm(\pm i\nu)^2\,\cG_2'(\pm i\nu)\\
&=0+\frac{	\nu^2}{\pi^2}\cG_2'(\pm i\nu)\\
&=\frac{\nu^2}{\pi^2}\left(\frac{\xi^2e^{i(\zeta-\bar\zeta')\xi}}{2\pi^3\sinh(\pi\xi)}\left(3-\pi\xi\coth(\pi\xi)+i(\zeta-\bar\zeta')\xi\right)\right)_{|_{\xi=\pm i\nu}}\\
&=e^{\mp(\zeta-\bar\zeta')\nu}(C_\nu(\zeta-\bar\zeta')\pm C_\nu')\,,
\end{align*}
where
\begin{align*}
C_\nu&=\frac{\nu^2}{2\pi^5}\frac{i\xi^3}{\sinh(\pi\xi)}_{|_{\xi=\pm i\nu}}=\frac{\nu^5}{2\pi^5}\frac{1}{\sinh(i\nu\pi)}=\frac{\nu^5}{2\pi^5}\frac{1}{i\sin(\nu\pi)}=-\frac{i\nu^5}{2\pi^5\sin(\nu\pi)}\\
C_\nu'&=-\frac{\nu^4}{2\pi^5}\frac{3- i\nu\pi\coth(i\nu\pi)}{\sinh(i\nu\pi)}=\frac{\nu^4}{2\pi^5}\frac{3-\nu\pi\cot(\nu\pi)}{-i\sin(\nu\pi)}=\frac{i\nu^4}{2\pi^5}\frac{3-\nu\pi\cot(\nu\pi)}{\sin(\nu\pi)}\,.
\end{align*}
If $\Re(\zeta-\bar\zeta')>0$, by letting $y_0\to(\nu')^-$ in formula~\eqref{eq:contourintegration}, we find that
\[K_{\omega_{-1,-1}}(\zeta,\zeta')=e^{-(\zeta-\bar\zeta')\nu}(C_\nu(\zeta-\bar\zeta')+C_\nu')+O\left(e^{-\Re(\zeta-\bar\zeta')\nu'}\right)\,.\]
Similarly, if $\Re(\zeta-\bar\zeta')<0$, by letting $y_0\to(-\nu')^+$ in formula~\eqref{eq:contourintegration}, we find that
\[K_{\omega_{-1,-1}}(\zeta,\zeta')=e^{(\zeta-\bar\zeta')\nu}(C_\nu(\zeta-\bar\zeta')-C_\nu')+O\left(e^{\Re(\zeta-\bar\zeta')\nu'}\right)\,,\]
as desired.
\end{proof}

The nature of the asymptotic expansion of $K_{\omega_{-1,-1}}(\zeta,\zeta')$ in Lemma~\ref{lem:unwound} is a slight novelty with respect to~\cite[\S2]{Barrett-Acta}, due to the increased dimension of our worm domains. This novelty is related to the presence of a double pole of $\cI(\xi)$ at $\xi=\pm i\nu$, in contrast with the simple pole of the integrand in~\cite[Equation (2.1)]{Barrett-Acta}.

For future use, we prove the next lemma.

\begin{lemma}\label{lem:increasedradius}
Let $0<\mu<\mu'$. Associating to each $F\in A^2(D_{\mu'})$ its restriction $F_{|_{D_\mu}}$ defines a linear map
\[A^2(D_{\mu'})\to A^2(D_\mu)\,,\]
whose image is dense in $A^2(D_\mu)$.
\end{lemma}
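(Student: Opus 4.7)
The plan is to reduce the density question, via rotational symmetry and the unwinding map, to a one-variable Paley-Wiener type statement on a horizontal strip. To begin with, $D_\mu\subset D_{\mu'}$, so the restriction is well-defined and contractive: $\|F|_{D_\mu}\|_{A^2(D_\mu)}\leq\|F\|_{A^2(D_{\mu'})}$. Both $D_\mu$ and $D_{\mu'}$ are invariant under independent rotations of $z_2$ and of $z_3$, whence $A^2(D_\mu)=\bigoplus_{j,k\in\bbZ}\cH^{j,k}(D_\mu)$ and likewise for $D_{\mu'}$. Since restriction commutes with these rotations, it commutes with the projections $Q_{(0,j,k)}$; consequently the image $V\subseteq A^2(D_\mu)$ of restriction splits as $V=\bigoplus_{j,k}\bigl(V\cap\cH^{j,k}(D_\mu)\bigr)$, with $V\cap\cH^{j,k}(D_\mu)=\{G|_{D_\mu}:G\in\cH^{j,k}(D_{\mu'})\}$, so density of $V$ in $A^2(D_\mu)$ is equivalent to density of $\{G|_{D_\mu}:G\in\cH^{j,k}(D_{\mu'})\}$ in $\cH^{j,k}(D_\mu)$ for every $j,k\in\bbZ$.

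Next I would use the unwinding biholomorphism of Remark~\ref{rmk:unwinding}, which manifestly commutes with restriction, to transport the problem to $D_\mu'\subset D_{\mu'}'$. By Lemma~\ref{lem:unwound}, each element of $\cH^{j,k}(D_\mu')$ has the form $(w_1,w_2,w_3)\mapsto f(w_1)w_2^jw_3^k$ with $f\in A^2(S_\beta,\omega_{j,k}^\mu)$ and $\beta=2\mu+\frac\pi2$, and analogously for $D_{\mu'}'$ with $\beta'=2\mu'+\frac\pi2>\beta$. The desired density therefore reduces to showing that the restriction
\[A^2(S_{\beta'},\omega_{j,k}^{\mu'})\to A^2(S_\beta,\omega_{j,k}^\mu)\,,\qquad f\mapsto f|_{S_\beta}\,,\]
has dense image for every $j,k\in\bbZ$.

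To finish, I would invoke the Paley-Wiener isomorphism of Remark~\ref{rmk:paley}, which identifies $A^2(S_\beta,\omega_{j,k}^\mu)$ with $L^2(\bbR,\widetilde\alpha_{j,k}^\mu)$ and $A^2(S_{\beta'},\omega_{j,k}^{\mu'})$ with $L^2(\bbR,\widetilde\alpha_{j,k}^{\mu'})$ via the common integral formula $\phi\mapsto\frac1{2\pi}\int_\bbR\phi(\xi)e^{i\zeta\xi}d\xi$. In these coordinates, restriction from $S_{\beta'}$ to $S_\beta$ becomes the natural inclusion $L^2(\bbR,\widetilde\alpha_{j,k}^{\mu'})\hookrightarrow L^2(\bbR,\widetilde\alpha_{j,k}^\mu)$. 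The convolution formula for $\omega_{j,k}^\mu$ and the Fourier-Laplace transforms of $\chi_{I_\mu},\chi_{I_{\pi/2}}$ (as computed just before the closed form for $K_{\omega_{-1,-1}}$ in Lemma~\ref{lem:unwound}) give $\widetilde\alpha_{j,k}^\mu(\xi)\asymp|\xi|^{-3}e^{2\beta|\xi|}$ as $|\xi|\to\infty$, so $\widetilde\alpha_{j,k}^\mu/\widetilde\alpha_{j,k}^{\mu'}$ is bounded on $\bbR$ (in fact decays exponentially) and the inclusion is continuous. Density is then immediate by truncation: for $\phi\in L^2(\bbR,\widetilde\alpha_{j,k}^\mu)$, the functions $\phi_N:=\phi\,\chi_{[-N,N]}$ lie in $L^2(\bbR,\widetilde\alpha_{j,k}^{\mu'})$ (both weights are bounded on $[-N,N]$) and converge to $\phi$ in $L^2(\bbR,\widetilde\alpha_{j,k}^\mu)$ by dominated convergence.

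The main obstacle is bookkeeping: one must verify carefully that restriction intertwines with the unwinding biholomorphism, with the rotational Fourier decomposition, and with the Paley-Wiener transform. Once those intertwinings are set up, the weight comparison and the truncation argument are routine.
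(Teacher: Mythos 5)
Your proposal is correct and follows essentially the same route as the paper: reduce via the rotational decomposition and the unwinding map of Remark~\ref{rmk:unwinding} together with Lemma~\ref{lem:unwound} to the restriction map $A^2(S_{\beta'},\omega_{j,k}^{\mu'})\to A^2(S_\beta,\omega_{j,k}^\mu)$, then use the Paley--Wiener identification of Remark~\ref{rmk:paley} and density of compactly supported functions in the weighted $L^2$ spaces. The only cosmetic difference is that you truncate with $\chi_{[-N,N]}$ where the paper invokes density of $C^\infty_0(\bbR)$, which is the same idea.
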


\begin{proof}
Thanks to Remark~\ref{rmk:unwinding} and to Lemma~\ref{lem:unwound}, it suffices to prove that, after setting $\beta=2\mu+\frac\pi2$ and $\beta'=2\mu'+\frac\pi2$, for all $j,k\in\bbZ$ the restriction to $S_\beta$ defines a linear map
\[A^2(S_{\beta'},\omega_{j,k}^{\mu'})\to A^2(S_\beta,\omega_{j,k}^\mu)\,,\]
whose image is dense in $A^2(S_\beta,\omega_{j,k}^\mu)$.

Since
\begin{align*}
\omega_{j,k}^\mu(x+iy)&=\pi^2(e^{(j+1)(\cdot)}\chi_{I_\mu})*(e^{(k+1)(\cdot)}\chi_{I_\mu})*\chi_{I_{\frac\pi2}}(y)\,,\\
\omega_{j,k}^{\mu'}(x+iy)&=\pi^2(e^{(j+1)(\cdot)}\chi_{I_{\mu'}})*(e^{(k+1)(\cdot)}\chi_{I_{\mu'}})*\chi_{I_{\frac\pi2}}(y)\,,
\end{align*}
our Remark~\ref{rmk:paley} guarantees that the spaces $A^2(S_\beta,\omega_{j,k}^\mu),A^2(S_{\beta'},\omega_{j,k}^{\mu'})$ are the images through the inverse Fourier transform of the spaces $L^2(\bbR,\phi),L^2(\bbR,\psi)$, where
\begin{align*}
\phi(\xi)&=\pi^2\,\frac{\sinh\left(2\mu\left(\xi-\frac{j+1}2\right)\right)}{\xi-\frac{j+1}2}\,\frac{\sinh\left(2\mu\left(\xi-\frac{k+1}2\right)\right)}{\xi-\frac{k+1}2}\,\frac{\sinh(\pi\xi)}{\xi}\,,\\
\psi(\xi)&=\pi^2\,\frac{\sinh\left(2\mu'\left(\xi-\frac{j+1}2\right)\right)}{\xi-\frac{j+1}2}\,\frac{\sinh\left(2\mu'\left(\xi-\frac{k+1}2\right)\right)}{\xi-\frac{k+1}2}\,\frac{\sinh(\pi\xi)}{\xi}\,.
\end{align*}
It follows at once that the restriction to $S_\beta$ maps $A^2(S_{\beta'},\omega_{j,k}^{\mu'})$ into $A^2(S_\beta,\omega_{j,k}^\mu)$. Since $C^\infty_0(\bbR)$ is dense in both $L^2(\bbR,\phi)$ and $L^2(\bbR,\psi)$, we immediately conclude that the image of the restriction map $A^2(S_{\beta'},\omega_{j,k}^{\mu'})\to A^2(S_\beta,\omega_{j,k}^\mu)$ is dense in $A^2(S_\beta,\omega_{j,k}^\mu)$.
\end{proof}

We now turn back to $D_\mu$.

\begin{lemma}\label{lem:l2convergence}
The reproducing kernel of $\cH^{j,k}(D_\mu)$ is
\[K_{j,k}(z,z')=K_{\omega_{j,k}}(\ell(z),\ell(z'))(z_1\,\overline z_1')^{-1}(z_2\,\overline z_2')^j\,(z_3\,\overline z_3')^k\,.\]
In particular, recalling that (for $\kappa\in\bbC$) $E_\kappa(z)$ is the holomorphic extension of $z_1^\kappa$ constructed in Definition~\ref{def:logarithm}, we have
\begin{align}\label{eq:wound}
&z_2\,\overline z_2'\,z_3\,\overline z_3'\,K_{-1,-1}(z,z')=\int_\bbR \frac{\xi^3\,E_{i\xi-1}(z)\,\overline{E_{i\xi-1}(z')}
}{2\pi^3\sinh^2(2\mu\xi)\sinh(\pi\xi)}\,d\xi\,.
\end{align}
For $\mu>\frac\pi2, \nu=\frac{\pi}{2\mu}, \nu'=\min\{2\nu,1\}, C_\nu=-\frac{i\nu^5}{2\pi^5\sin(\nu\pi)}$ and $C_\nu'=\frac{i\nu^4}{2\pi^5}\frac{3-\nu\pi\cot(\nu\pi)}{\sin(\nu\pi)}$, we have
\begin{align*}
&z_2\,\overline z_2'\,z_3\,\overline z_3'\,K_{-1,-1}(z,z')\\
&=E_{-\nu-1}(z)\,\overline{E_{\nu-1}(z')}\left(C_\nu\left(\ell(z)-\overline{\ell(z')}\right)+C_\nu'\right)+O\left(\frac{|z_1'|^{\nu'-1}}{|z_1|^{\nu'+1}}\right)\quad\mathrm{if\,}|z_1|>|z_1'|\notag\\
&=E_{\nu-1}(z)\,\overline{E_{-\nu-1}(z')}\left(C_\nu\left(\ell(z)-\overline{\ell(z')}\right)-C_\nu'\right)+O\left(\frac{|z_1|^{\nu'-1}}{|z_1'|^{\nu'+1}}\right)\quad\mathrm{if\,}|z_1|<|z_1'|.\notag
\end{align*}
As a consequence: for each $\mu>\frac\pi2,z'\in D_\mu,m\in\bbN,s\in[0,1)$, the function
\[z\mapsto \left(\Re\left(z_1e^{-i \log |z_2 z_3|^2}\right)\right)^s\,\frac{\partial^m}{\partial z_1^m}K_{-1,-1}(z,z')\]
does not belong to $L^2(D_\mu)$ if $m-s\geq\nu$.
\end{lemma}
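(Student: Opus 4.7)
The plan is to start from the integral representation~\eqref{eq:wound}, differentiate it $m$ times in $z_1$ under the integral sign, and then repeat the contour-shift argument from the proof of Lemma~\ref{lem:unwound}. Using $\partial E_{i\xi-1}/\partial z_1=(i\xi-1)E_{i\xi-2}$, and noting that the integrand of~\eqref{eq:wound} decays exponentially in $|\xi|$ uniformly on compact subsets of $D_\mu\times D_\mu$, the differentiation is licit and produces the new integrand
\[\frac{\xi^3\,\prod_{j=1}^m(i\xi-j)\,E_{i\xi-1-m}(z)\,\overline{E_{i\xi-1}(z')}}{2\pi^3\,\sinh^2(2\mu\xi)\,\sinh(\pi\xi)}\,.\]

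For $|z_1|<|z_1'|$ I would shift the contour from $\bbR$ down to $\bbR+iy_0$ with $y_0\in(-\nu',-\nu)$. The crucial observation is that the polynomial $\prod_{j=1}^m(i\xi-j)$ vanishes at $\xi=-ij$ for each $j=1,\dots,m$, exactly cancelling the simple poles of $1/\sinh(\pi\xi)$ at those same points; no residues beyond the one at the double pole $\xi=-i\nu$ are therefore picked up. Computing this residue exactly as in the proof of Lemma~\ref{lem:unwound} (replacing $\cG_2(\xi)$ by its analogue $\cG_m(\xi)$ that incorporates the extra factors) and using the identity $\partial_\xi E_{i\xi-1-m}(z)=i\ell(z)\,E_{i\xi-1-m}(z)$, one arrives at
\[\mathrm{Res}(\,\cdot\,,-i\nu)=E_{\nu-1-m}(z)\,\overline{E_{-\nu-1}(z')}\,\bigl[C_\nu\,(\nu-1)(\nu-2)\cdots(\nu-m)\,\ell(z)+d_m(z')\bigr]\,,\]
for some function $d_m(z')$ independent of $z$; since $\nu\in(0,1)$, the coefficient of $\ell(z)$ is a nonzero multiple of $C_\nu$. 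The residual integral along $\bbR+iy_0$ contributes an error of order $O(|z_1|^{\nu'-1-m})$, by the same argument that bounds the error term in Lemma~\ref{lem:unwound}.

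After dividing by $z_2\,\overline z_2'\,z_3\,\overline z_3'$, and using $|E_{\nu-1-m}(z)|=|z_1|^{\nu-1-m}$ together with the bound $|\ell(z)|\ge|\log|z_1||-C$ (valid on $D_\mu$, since $\Im\ell(z)$ is bounded there), I obtain the pointwise estimate
\[\bigl|\partial_{z_1}^m K_{-1,-1}(z,z')\bigr|\gtrsim|z_1|^{\nu-1-m}\,|\log|z_1||\]
for $|z_1|$ sufficiently small. The substitution $\zeta=z_1\,e^{-i\log|z_2z_3|^2}$ identifies each $z_1$-fiber of $D_\mu$ (for fixed $(z_2,z_3)$ in the bounded region $\log|z_2|^2,\log|z_3|^2\in I_\mu$) with the right half-plane $\Re\zeta>0$, preserves both $|z_1|=|\zeta|$ and $\Re(z_1\,e^{-i\log|z_2z_3|^2})=\Re\zeta$, and has unit Jacobian. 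Passing to polar coordinates $\zeta=re^{i\theta}$, the $L^2$ integrand is bounded below---after factoring out the finite $(z_2,z_3)$-volume and the convergent angular integral $\int_{-\pi/2}^{\pi/2}\cos^{2s}\theta\,d\theta$---by a positive constant times $r^{2s+2\nu-1-2m}\,|\log r|^2$, which fails to be integrable at $r=0$ precisely when $m-s\ge\nu$.

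The main delicate step will be the residue computation: one has to verify the pole-cancellation at $\xi=-ij$ for $1\le j\le m$ and to extract the explicit nonzero coefficient $C_\nu(\nu-1)\cdots(\nu-m)$ in front of $\ell(z)$, since this logarithmic factor is precisely what drives the divergence at the borderline case $m-s=\nu$, where the power-law $r^{-1}$ is only marginally non-integrable.
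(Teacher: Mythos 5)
Your plan for the substantive (non-$L^2$) assertion is correct and is essentially the paper's argument in a slightly different dress: both reduce matters to an expansion $z_2 z_3\,\partial_{z_1}^m K_{-1,-1}(z,z')=E_{\nu-m-1}(z)\bigl(\phi\,\ell(z)+\psi\bigr)+O\bigl(|z_1|^{\nu'-m-1}\bigr)$ on $\{|z_1|<|z_1'|\}$ with $\phi=C_\nu(\nu-1)\cdots(\nu-m)\neq0$, and then to the non-integrability of $r^{2(s+\nu-m)-1}$ at $r=0$. The paper gets the expansion by differentiating the $m=0$ expansion term by term via $\partial_{z_1}E_\kappa=\kappa E_{\kappa-1}$ and $\partial_{z_1}(E_\kappa\ell)=E_{\kappa-1}(\kappa\ell-1)$, whereas you re-run the contour shift on the differentiated integral; both give the same leading coefficient, and your residue bookkeeping checks out. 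Your way of concluding (a pointwise lower bound $\gtrsim |z_1|^{\nu-1-m}\,|\log|z_1||$ for small $|z_1|$, then polar coordinates in $\zeta=z_1e^{-i\log|z_2z_3|^2}$) is sound, and in fact handles cleanly a point the paper passes over quickly: for $m\geq1$ the error term $O(|z_1|^{\nu'-m-1})$ is not by itself square-integrable near $z_1=0$, so one really should compare main term against error pointwise on a small-$|z_1|$ region, exactly as you do. One small caveat: the implied constants in the contour-shift error degrade as $z$ approaches $\partial D_\mu$ (the exponential decay along $\bbR+iy_0$ weakens), so to get a uniform lower bound you should restrict to a fixed subregion, e.g.\ $|\arg(z_1e^{-i\log|z_2z_3|^2})|\leq\pi/4$ and $\log|z_2|^2,\log|z_3|^2\in I_{\mu/2}$; divergence there already suffices.

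Two of the steps you single out as crucial are in fact not needed. The cancellation of the poles of $1/\sinh(\pi\xi)$ at $\xi=-ij$, $1\le j\le m$, by the factor $\prod_{j=1}^m(i\xi-j)$ is true but irrelevant, because you only shift to $\bbR+iy_0$ with $|y_0|<\nu'\leq1$, so the only singularity crossed is the double pole at $-i\nu$ regardless of $m$. Likewise, the logarithmic factor is not what forces divergence at the borderline $m-s=\nu$: there the radial integrand is already of size $r^{-1}$, which is non-integrable at $0$ with or without the log. What the nonvanishing of $C_\nu(\nu-1)\cdots(\nu-m)$ genuinely buys is that the bracket $\phi\,\ell(z)+\psi$ cannot degenerate as $z_1\to0$ (you have no control on $\psi=d_m(z')$), so that the main term really has the stated size; that is the point to emphasize. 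Finally, note that the lemma also asserts the reproducing-kernel formula for $\cH^{j,k}(D_\mu)$, identity~\eqref{eq:wound}, and the two-sided expansion of $K_{-1,-1}$; your proposal takes~\eqref{eq:wound} as its starting point without deriving it. These parts are routine transfers of Lemma~\ref{lem:unwound} through the unwinding isometry of Remark~\ref{rmk:unwinding} (as in the paper), but they should be stated for completeness.
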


\begin{proof}
The first statement follows from Lemma~\ref{lem:unwound} through the isometry $T^{-1}:\cH^{j,k}(D_\mu')\to\cH^{j,k}(D_\mu)$, which maps each $F$ into $T^{-1}F(z)=F(\ell(z),z_2,z_3)z_1^{-1}$.

To prove formula~\eqref{eq:wound}, we compute
\begin{align*}
z_2\,\overline z_2'\,z_3\,\overline z_3'\,K_{-1,-1}(z,z')&=(z_1\,\overline z_1')^{-1}K_{\omega_{-1,-1}}(\ell(z),\ell(z'))\\
&=(z_1\,\overline z_1')^{-1}\int_\bbR \frac{\xi^3\,e^{i(\ell(z)-\overline{\ell(z')})\xi}}{2\pi^3\sinh^2(2\mu\xi)\sinh(\pi\xi)}\,d\xi\\
&=\int_\bbR \frac{\xi^3\,E_{i\xi-1}(z)\,\overline{E_{i\xi-1}(z')}
}{2\pi^3\sinh^2(2\mu\xi)\sinh(\pi\xi)}\,d\xi\,,
\end{align*}
where we took into account the equalities $E_{i\xi-1}(z)=e^{(i\xi-1)\ell(z)}=e^{i\ell(z)\xi}z_1^{-1}$ and the equalities $\overline{E_{i\xi-1}(z')}=\overline{e^{i\ell(z')\xi}(z'_1)^{-1}}=e^{-i\overline{\ell(z')}\xi}(\bar z'_1)^{-1}$.

We assume henceforth $\mu>\frac\pi2$ and apply the estimates for $K_{\omega_{-1,-1}}$ obtained in Lemma~\ref{lem:unwound}. For $|z_1|>|z_1'|$, which is the same as $\Re\left(\ell(z)-\overline{\ell(z')}\right)>0$, we find that
\begin{align*}
z_1\,\overline z_1'\,z_2\,\overline z_2'\,z_3\,\overline z_3'\,K_{-1,-1}(z,z')&=e^{-(\ell(z)-\overline{\ell(z')})\nu}\left(C_\nu\left(\ell(z)-\overline{\ell(z')}\right)+C_\nu'\right)+O\left(e^{-\Re\left(\ell(z)-\overline{\ell(z')}\right)\nu'}\right)\\
&=E_{-\nu}(z)\,\overline{E_{\nu}(z')}\left(C_\nu\left(\ell(z)-\overline{\ell(z')}\right)+C_\nu'\right)+O\left(\left|z_1'/z_1\right|^{\nu'}\right)\,,\\
z_2\,\overline z_2'\,z_3\,\overline z_3'\,K_{-1,-1}(z,z')&=E_{-\nu-1}(z)\,\overline{E_{\nu-1}(z')}\left(C_\nu\left(\ell(z)-\overline{\ell(z')}\right)+C_\nu'\right)+O\left(\frac{|z_1'|^{\nu'-1}}{|z_1|^{\nu'+1}}\right)\,,
\end{align*}
as stated. For $|z_1|<|z_1'|$, we find that
\begin{align*}
z_1\,\overline z_1'\,z_2\,\overline z_2'\,z_3\,\overline z_3'\,K_{-1,-1}(z,z')&=e^{(\ell(z)-\overline{\ell(z')})\nu}\left(C_\nu\left(\ell(z)-\overline{\ell(z')}\right)-C_\nu'\right)+O\left(e^{\Re\left(\ell(z)-\overline{\ell(z')}\right)\nu'}\right)\\
&=E_{\nu}(z)\,\overline{E_{-\nu}(z')}\left(C_\nu\left(\ell(z)-\overline{\ell(z')}\right)-C_\nu'\right)+O\left(\left|z_1/z_1'\right|^{\nu'}\right)\,,\\
z_2\,\overline z_2'\,z_3\,\overline z_3'\,K_{-1,-1}(z,z')&=E_{\nu-1}(z)\,\overline{E_{-\nu-1}(z')}\left(C_\nu\left(\ell(z)-\overline{\ell(z')}\right)-C_\nu'\right)+O\left(\frac{|z_1|^{\nu'-1}}{|z_1'|^{\nu'+1}}\right)\,,
\end{align*}
as desired.

We now turn to the last statement, still under the assumption $\mu>\frac\pi2$. Since $\frac{\partial E_{\kappa}}{\partial z_1}=\kappa E_{\kappa-1}$ and $\frac{\partial}{\partial z_1}\left(E_{\kappa}(z)\,\ell(z)\right)=\kappa E_{\kappa-1}(z)\,\ell(z)-E_{\kappa}(z)z_1^{-1}=E_{\kappa-1}\left(\kappa\,\ell(z)-1\right)$, for any $m\in\bbN$ and any $z'\in D_\mu$ there exist $\phi_{\nu}^m=\phi_{\nu}^m(z'),\psi_{\nu}^m=\psi_{\nu}^m(z')\in\bbC$ such that
\begin{align*}
z_2\,z_3\,\frac{\partial^m}{\partial z_1^m}K_{-1,-1}(z,z')&=E_{\nu-m-1}(z)\left(\phi_{\nu}^m\ell(z)+\psi_{\nu}^m\right)+O\left(|z_1|^{\nu'-m-1}\right)
\end{align*}
in the region $D_\mu^{z'}:=\{z\in D_\mu : |z_1|<|z_1'|\}$. If the function $\frac{\partial^mK_{-1,-1}}{\partial z_1^m}(\cdot,z')$ belongs to $L^2(D_\mu)$, then $E_{\nu-m-1}(z)\left(\phi_{\nu}^m\ell(z)+\psi_{\nu}^m\right)\left(z_2\,z_3\right)^{-1}$ is square-integrable in $D_\mu^{z'}$. Using the notation $H^{z'}$ for the half-disk $\{\zeta\in\bbC : \Re(\zeta)>0, |\zeta|<|z_1'|\}$, we compute
\begin{align*}
&\int_{D_\mu^{z'}}|E_{\nu-m-1}(z)|^2\left|\phi_{\nu}^m\ell(z)+\psi_{\nu}^m\right|^2\left|z_2\,z_3\right|^{-2}dV(z)\\
&=4\pi^2\int_{\log r_3^2\in I_\mu}\int_{\log r_2^2\in I_\mu}\int_{z _1e^{-i \log(r_2r_3)^2}\in H^{z'}}|z_1|^{2(\nu-m-1)}\left|\phi_{\nu}^m\,I+\psi_{\nu}^m\right|^2dV(z_1)\frac{dr_2}{r_2}\frac{dr_3}{r_3}\\
&=\pi^2\int_{s_3\in I_\mu}\int_{s_2\in I_\mu}\int_{z _1e^{-i (s_2+s_3)}\in H^{z'}}|z_1|^{2(\nu-m-1)}\left|\phi_{\nu}^m\,I\!I+\psi_{\nu}^m\right|^2dV(z_1)\,ds_2\,ds_3\\
&=\pi^2\int_{s_3\in I_\mu}\int_{s_2\in I_\mu}\int_{s_2+s_3-\frac\pi2}^{s_2+s_3+\frac\pi2}\int_0^{|z_1'|}r_1^{2(\nu-m)-1}\left|\phi_{\nu}^m\,I\!I\!I+\psi_{\nu}^m\right|^2\,dr_1\,d\theta_1\,ds_2\,ds_3\,,
\end{align*}
where
\begin{align*}
I&=\log\left(z_1e^{-i \log(r_2r_3)^2}\right)+i \log(r_2r_3)^2\\
I\!I&=\log\left(z_1e^{-i (s_2+s_3)}\right)+i (s_2+s_3)\\
I\!I\!I&=\log\left(r_1e^{i (\theta_1-s_2-s_3)}\right)+i (s_2+s_3)=\log r_1+i\theta_1\,.
\end{align*}
If the last integral converges, then $\nu-m>0$. Similarly, if $s\in[0,1),m\in\bbN,z'\in D_\mu$ and if the function
\[z\mapsto \left(\Re\left(z_1e^{-i \log |z_2 z_3|^2}\right)\right)^s\,\frac{\partial^m}{\partial z_1^m}K_{-1,-1}(z,z')\]
belongs to $L^2(D_\mu)$, then $\nu+s-m>0$, i.e., $m-s<\nu$.
\end{proof}

The slight novelties we already mentioned (with respect to~\cite[\S2]{Barrett-Acta}) in the asymptotic expansion in Lemma~\ref{lem:unwound} are reflected in our asymptotic expansion of $K_{-1,-1}(z,z')$. Nevertheless, the condition $m-s\geq\nu$ appearing at the end of Lemma~\ref{lem:l2convergence} is the same exact condition appearing in~\cite[Equation (3.2)]{Barrett-Acta}.  As a consequence: the condition $\index\geq\nu$ at the end of Theorem~\ref{non-regularity} about the irregularity of the Bergman projection of every $\Omega\in\mathscr{C}_\mu$, which we are about to prove, is the same exact condition appearing in~\cite[Theorem 1]{Barrett-Acta}. For the sake of simplicity, we will write $W^\index(\Omega)$ instead of $W^{\index,2}(\Omega)$.

\begin{proof}[Proof of Theorem~\ref{non-regularity}]
Fix $\Omega=\cW_\eta\in\mathscr{C_\mu}$. For $\lambda\geq1$, we set $\tau_\lambda(z)=(\lambda z_1,z_2,z_3)$ for all $z\in\Omega$ and $\Omega^\lambda=\tau_\lambda(\Omega)$. Setting $T_\lambda f = f\circ\tau_\lambda$ defines an isomorphism $T_\lambda:W^\index(\Omega^\lambda)\to W^\index(\Omega)$ for all $\index\geq0$, with inverse $T_\lambda^{-1}g=g\circ\tau_\lambda^{-1}$. Although $T_\lambda$ is not an isometry in general, we have
\[\Vert T_\lambda f\Vert_{W^\index(\Omega)}\leq\lambda^{\index-1}\Vert f\Vert_{W^\index(\Omega^\lambda)}\]
for all $\index\in\bbN$ and, by interpolation, for all $\index\geq0$. Consider the defining function
\[\rho(z)=|z_1|^2-2\Re(z_1e^{-i\log|z_2z_3|^2})+\eta(\log|z_2|^2,\log|z_3|^2)\]
of $\Omega=\cW_\eta$ and notice that
\[\rho_\lambda(z)=\lambda\rho\circ\tau_\lambda^{-1}(z)=\lambda^{-1}|z_1|^2-2\Re(z_1e^{-i\log|z_2z_3|^2})+\lambda\eta(\log|z_2|^2,\log|z_3|^2)\]
is a defining function of $\Omega^\lambda$ with
\[\rho_\lambda(z)\,\chi_{I_\mu}(\log|z_2|^2)\,\chi_{I_\mu}(\log|z_3|^2)=\left(\lambda^{-1}|z_1|^2-2\Re(z_1e^{-i\log|z_2z_3|^2})\right)\,\chi_{I_\mu}(\log|z_2|^2)\,\chi_{I_\mu}(\log|z_3|^2)\]
converging as $\lambda\to+\infty$ to the defining function of $D_\mu$ given by
\[\rho_\infty(z)\,\chi_{I_\mu}(\log|z_2|^2)\,\chi_{I_\mu}(\log|z_3|^2)\,,\qquad\rho_\infty(z)=-2\Re(z_1e^{-i\log|z_2z_3|^2})\,.\]
We note, for future reference, that $(\Omega^\lambda\cap D_\mu)\nearrow D_\mu$ as $\lambda\nearrow+\infty$, whence every compact subset of $D_\mu$ is contained in $\Omega^\lambda$ for sufficiently large $\lambda$; while every compact subset of $\bbC^3\setminus\overline{D}_\mu$ does not intersect $\Omega^\lambda$ for sufficiently large $\lambda$. Let $P,P_\lambda$ denote the Bergman projections of $\Omega,\Omega^\lambda$, respectively, related by the equality $P_\lambda=T_\lambda^{-1}P T_\lambda$. Then
\begin{align*}
\left\Vert|\rho_\lambda|^s\,\frac{\partial^m}{\partial z_1^m}P_\lambda f\right\Vert_{L^2(\Omega^\lambda)}
&=\left\Vert|\rho_\lambda|^s\,\frac{\partial^m}{\partial z_1^m}(T_\lambda^{-1}P T_\lambda f)\right\Vert_{L^2(\Omega^\lambda)}\\
&=\lambda^{s-m}\left\Vert\left|\rho\circ\tau_\lambda^{-1}\right|^s\,\left(\frac{\partial^m}{\partial z_1^m}(P T_\lambda f)\right)\circ\tau_\lambda^{-1}\right\Vert_{L^2(\Omega^\lambda)}\\
&=\lambda^{1-m+s}\left\Vert|\rho|^s\,\frac{\partial^m}{\partial z_1^m}(P T_\lambda f)\right\Vert_{L^2(\Omega)}\\
&\leq C_1\,\lambda^{1-m+s}\left\Vert P T_\lambda f\right\Vert_{W^{m-s}(\Omega)}
\end{align*}
for some constant $C_1>0$, according to~\cite{Li87}. Assume that $P(W^{m-s}(\Omega))\subseteq W^{m-s}(\Omega)$, whence 
there exists a constant $C>0$ such that $\left\Vert P g\right\Vert_{W^{m-s}(\Omega)}\leq C\,\left\Vert g\right\Vert_{W^{m-s}(\Omega)}$. If we set $C_2:=C_1\,C$, then the previous chain of inequalities yields
\begin{align}
\left\Vert|\rho_\lambda|^s\,\frac{\partial^m}{\partial z_1^m}P_\lambda f\right\Vert_{L^2(\Omega^\lambda)}
&\leq C_2\,\lambda^{1-m+s}\left\Vert T_\lambda f\right\Vert_{W^{m-s}(\Omega)}\notag\\
&\leq C_2\,\left\Vert f\right\Vert_{W^{m-s}(\Omega^\lambda)}\,.\label{eq:technical1}
\end{align}
We claim that inequality~\eqref{eq:technical1} implies that
\begin{equation}\label{eq:technical2}
\left\Vert|\rho_\infty|^s\,\frac{\partial^m}{\partial z_1^m}P_\infty f\right\Vert_{L^2(D_\mu)}\leq C_2\,\left\Vert f\right\Vert_{W^{m-s}(\bbC^3)}
\end{equation}
for all $f\in W^{m-s}(\bbC^3)$ compactly supported in $\overline{D}_\mu$. Assuming this claim, let $K(\cdot,\cdot)$ denote the Bergman kernel of $D_\mu$. For any fixed $z'\in D_\mu$ and for any open ball $B=B(z',R)\subset D_\mu$, we can choose a function $f\in C^\infty_0(\bbC^3)$, supported in $B$ and radial in $B$, such that $K(\cdot,z')=P_\infty f$: it suffices, see~\cite{belli}, to set $f(z'+su)=\phi(s)$ when $s>0, u\in\bbC^3, |u|=1$, for some $\phi:[0,+\infty)\to \bbC$ supported in the interval $[0,R]$ and such that $\int_{\bbC^3} f(w) dV(w)=1$. From the last inequality, we conclude that
\[\left\Vert|\rho_\infty|^s\,\frac{\partial^m}{\partial z_1^m}K(\cdot,z')\right\Vert_{L^2(D_\mu)}<+\infty\,,\]
whence $m-s<\nu$ because of Lemma~\ref{lem:l2convergence}.

Our claim can be proven as follows. For sufficiently large $\lambda$, the function $\chi_{\Omega^\lambda}\,P_\lambda f$ is well defined and
\[\Vert\chi_{\Omega^\lambda}\,P_\lambda f\Vert_{L^2(\bbC^3)}\leq\Vert f\Vert_{L^2(\bbC^3)}\,.\]
Therefore, there exists a sequence $\{\lambda_n\}_{n\in\bbN}\subset(1,+\infty)$ with $\lambda_n\to+\infty$ as $n\to+\infty$ such that the sequence $\{\chi_{\Omega^{\lambda_n}}\,P_{\lambda_n} f\}_{n\in\bbN}$ (is well defined and) weakly converges to a function $h$ in $L^2(\bbC^3)$. Since $\{\Omega^{\lambda_n}\cap D_\mu\}_{n\in\bbN}$ is an increasing sequence of domains converging to $D_\mu$, the limit function $h$ must be holomorphic in $D_\mu$. Moreover, $h\equiv0$ in $\bbC^3\setminus\overline{D}_\mu$. Our next aim is showing that $h=\chi_{D_\mu}\,P_\infty f$. If we choose $\mu'>\mu$ such that the inclusions~\eqref{eq:worminclusions} hold true, then for every $n\in\bbN$ the inclusion $\Omega^{\lambda_n}\subset D_{\mu'}$ holds true. Thus, the property $f-P_{\lambda_n}f\perp A^2(\Omega^{\lambda_n})$ implies the property $f-\chi_{\Omega^{\lambda_n}}\,P_{\lambda_n}f\perp A^2(D_{\mu'})$. By taking the weak limit as $n\to+\infty$, we conclude that $f-h\perp A^2(D_{\mu'})$. Lemma~\ref{lem:increasedradius} now yields $f-h\perp A^2(D_\mu)$. Therefore, $h=\chi_{D_\mu}\,P_\infty f$, as desired. Finally, we remark that $\chi_{D_\mu}\,|\rho_\infty|^s\,\frac{\partial^m}{\partial z_1^m}P_\infty f$ is the weak limit in $L^2(\bbC^3)$ of a subsequence of
\[\left\{\chi_{\Omega^{\lambda_n}}\,|\rho_\lambda|^s\,\frac{\partial^m}{\partial z_1^m}P_{\lambda_n} f\right\}_{n\in\bbN}\,.\]
Taking into account inequality~\eqref{eq:technical1}, we immediately derive inequality~\eqref{eq:technical2}. This completes the proof of our claim.
\end{proof}


\section{Closing remarks}\label{sec:closingremarks}

For more than forty-five years the worm domain has provided analytic and geometric insight into important complex analytic phenomena of several variables. The worm has been a decisive counterexample for many longstanding problems.

In this paper we have constructed some new, geometrically natural, 3-dimensional variants of the classical two-dimensional Diederich-Fornæss worm domain. We show that they are smoothly bounded, pseudoconvex, and have nontrivial Nebenh\"{u}lle. We also show that their Bergman projections are not bounded in the Sobolev topology for sufficiently large Sobolev indices.

We plan to further this study, in the spirit of~\cite[\S3]{ADF2023}, to fully understand which geometric features of our newly constructed class of smoothly bounded pseudoconvex domains play an essential role. We expect from this further study a significant generalization step in the already rich realm of worm domains.


\section*{Acknowledgements}

The second author is partly supported by GNAMPA INdAM through Progetto ``Function theory in several complex and quaternionic variables'' (CUP{\_}E53C220011930001).

The third author is partly supported by: GNSAGA INdAM; Progetto ``Teoria delle funzioni ipercomplesse e applicazioni'' Universit\`a di Firenze; PRIN 2022 ``Real and complex manifolds: geometry and holomorphic dynamics'' MIUR; Finanziamento Premiale ``Splines for accUrate NumeRics: adaptIve models for Simulation Environments'' INdAM.

The authors are grateful to the anonymous referee for making valuable suggestions that improved the presentation of this work.


\vspace*{.2in}

\begin{quote} 
Steven G. Krantz \\ 
Department of Mathematics \\
Washington University in St. Louis \\
1 Brookings Drive\\
St.\ Louis, Missouri 63130 \\ 
{\tt sk@wustl.edu} 
\end{quote}
\vspace*{.01in}

\begin{quote}
Marco M. Peloso \\
Dipartimento di Matematica ``F. Enriques''\\
(Dipartimento di Eccellenza MUR 2023-27)\\
Universit\`a degli Studi di Milano\\
Via C. Saldini 50\\
I-20133 Milano \\
{\tt marco.peloso@unimi.it}
\end{quote}
\vspace*{.01in}

\begin{quote}
Caterina Stoppato \\
Dipartimento di Matematica e Informatica``U. Dini''\\
Universit\`a degli Studi di Firenze\\
Viale Morgagni 67/A\\
I-50134 Firenze  \\
{\tt caterina.stoppato@unifi.it}
\end{quote}

\end{document}